\theoremstyle{plain}
  \newtheorem*{theorem*}{Theorem}
  \newtheorem*{ps}{\color{RedViolet}The lifting problem}
  \newtheorem*{cs}{\color{RedViolet}The classical lifting problem}
  \newtheorem{theorem}{Theorem}[section]
  \newtheorem{lemma}[theorem]{Lemma}
  \newtheorem{corollary}[theorem]{Corollary}
\theoremstyle{definition}
  \newtheorem{defn*}{Definition}
  \newtheorem{defn}[theorem]{Definition}
\theoremstyle{remark}
  \newtheorem*{remark*}{Remark}
  \newtheorem{remark}[theorem]{Remark}
\newcommand*{\C}{\mathbb C}
\newcommand*{\R}{\mathbb R}
\newcommand*{\Z}{\mathbb Z}
\newcommand*{\alg}{\mathcal}
\newcommand*{\hilb}{\mathfrak}
\newcommand*{\aA}{\alg A}
\newcommand*{\aB}{\alg B}
\newcommand*{\End}{\mathcal L}
\newcommand*{\hH}{\hilb H}
\newcommand*{\one}{1}
\newcommand{\hA}{\smash{\hat{\aA}}}
\newcommand{\ha}{\smash{\hat{\alpha}}}
\newcommand{\hf}{\smash{\hat{f}}}
\newcommand{\hfi}{\smash{\hat{\varphi}}}
\newcommand{\hG}{\smash{\hat{G}}}
\newcommand{\hg}{\smash{\hat{g}}}
\newcommand{\hiso}{\smash{\hat{A}}}
\newcommand{\hP}{\smash{\hat{P}}}
\newcommand{\hT}{\smash{\hat{\mathbb{T}}}}
\newcommand*{\Cont}{C}
\DeclareMathOperator{\Ad}{Ad}
\DeclareMathOperator{\Aut}{Aut}
\DeclareMathOperator{\Diff}{Diff}
\DeclareMathOperator{\Ext}{Ext}
\DeclareMathOperator{\Fr}{Fr}
\DeclareMathOperator{\Gau}{Gau}
\DeclareMathOperator{\Homeo}{Homeo}
\DeclareMathOperator{\Hom}{Hom}
\DeclareMathOperator{\id}{id}
\DeclareMathOperator{\Irrep}{Irr}
\DeclareMathOperator{\Pic}{Pic}
\DeclareMathOperator{\rep}{Rep}
\DeclareMathOperator{\SO}{SO}
\DeclareMathOperator{\Spin}{Spin}
\DeclareMathOperator{\SU}{SU}
\DeclareMathOperator{\Tr}{Tr}
\newcommand{\aka}{\mbox{a.\,k.\,a.}\xspace}
\newcommand{\acts}{\,.\,}
\newcommand{\cf}{\mbox{cf.}\xspace}
\newcommand{\eg}{\mbox{e.\,g.}\xspace}
\newcommand*{\ie}{\mbox{i.\,e.}\xspace}
\DeclareRobustCommand{\Star}{\texorpdfstring{\ensuremath{^*}\nobreakdash-}{*-}}
\begin{document}

\title[Noncommutative principal bundles and central extensions]{Noncommutative principal bundles \\ and central extensions}




\date{\today}

\author{Stefan Wagner}
\address{Blekinge Tekniska H\"ogskola}
\email{stefan.wagner@bth.se}

\subjclass[2020]{Primary 58B34, 46L85; Secondary 55R10, 16D70}

\keywords{Noncommutative principal bundle, free action, central extension, spin structure}

\begin{abstract}
Motivated by the classical theory of spin structures, we develop a theory for lifting free C\Star dynamical systems, \aka noncommutative principal bundles, along central extensions. 
This theory extends the bundle-theoretic notion of spin structures and yields a complete existence and classification result for such lifts. 
Using factor system techniques and Picard formalism, our approach introduces new invariants and obstruction classes, thereby unifying geometric, cohomological, and operator-algebraic perspectives.
A range of examples demonstrates the scope of the theory.

\end{abstract}

\maketitle

\section{Introduction}
\label{sec:intro}

Riemannian spin geometry lies at the intersection of differential geometry, topology, and global analysis, with principal bundle theory at its conceptual core. 
Its key objects are spin structures, arising from lifts of principal bundles, and the associated Dirac operator, whose analytic and topological features link geometric invariants with index theory and play an essential role in mathematical formulations of quantum field theory.

In the noncommutative setting, spectral triples provide a natural framework for noncommutative Riemannian spin geometry,  facilitating the extension of geometric principles by analogy with the classical theory (see, \eg,~\cite{Co94,Co08} and ref.~therein).
However, unlike in the classical setting, the axiomatic formulation of spectral triples does not inherently incorporate noncommutative principal bundles,
which are gaining prominence in a growing range of applications within geometry and mathematical physics (see, \eg, \cite{Cac23,CacMes19,FaLa24,KrBuSt23,SchWa21,SchWa20,To24,Wa23} and ref.~therein).
This presents a compelling opportunity for further investigation.

To set the stage, we briefly review the classical framework of Riemannian spin geometry.
Let $X^n$ be a connected, orientable Riemannian manifold. 
Its frame bundle $\Fr(X)$ is the principal $\SO(n)$-bundle of positively oriented orthonormal frames.
Since $\SO(n)$ admits a central extension $\one \to \mathbb{Z}_2 \to \Spin(n) \to \SO(n) \to \one$ via its universal covering mapping, one may ask whether there exists a principal $\Spin(n)$-bundle $Q$ over $X$ such that $Q/\mathbb{Z}_2 \cong \Fr(X)$ - a natural lifting problem for principal bundles.
In the affirmative case, $X$ is called a Riemannian spin manifold, and the principal $\Spin(n)$-bundle $Q$ over $X$ is referred to as a spin structure on $X$.
The obstruction to the existence of a spin structure on $X$ is governed by the second Stiefel--Whitney class $w_2(X) \in H^2(X,\mathbb{Z}_2)$.
Moreover, spin structures are generally not unique: if $w_2(X)$ vanishes, then the inequivalent spin structures are parametrized, up to equivalence, by $H^1(X,\mathbb{Z}_2)$.

Analytically, a spin structure on $X$ yields the spinor bundle $S \to X$ with Hilbert space $L^2(S)$ of square-integrable sections, on which
$C^\infty(X)$ acts by pointwise multiplication,~together with a formally
self-adjoint operator $D$ on $L^2(S)$, the Dirac operator.
If $X$ is compact, $D$ is an unbounded self-adjoint operator with compact
resolvent and bounded commutators with $C^\infty(X)$.
The triple $(C^\infty(X), L^2(S), D)$ then defines a spectral triple, from which the spin manifold can, with suitable additional data, be reconstructed.

Riemannian spin geometry thus rests on the ability to lift principal $\SO(n)$-bundles along the above central extension, placing the lifting problem at the conceptual core of the theory. 
Motivated by this classical perspective, we formulate and analyze the noncommutative analogue of this lifting problem.

\subsection*{Goal of this paper}
\label{sec:goal}

The primary goal of this paper is to develop a general framework for lifting \emph{free C\Star dynamical systems} - natural models for noncommutative principal bundles - along central extensions of the structure group, thereby generalizing the classical notion of a \hyperref[sec:spin_structure]{\emph{spin structure}} to the noncommutative setting.
Given the well-developed theory in the compact case, we restrict attention to \hyperref[sec:free]{free C\Star dynamical systems} $(\aA,G,\alpha)$ with $\aA$ unital and $G$ compact:

\begin{ps}
Let $(\aA,G,\alpha)$ be a free C\Star dynamical system with fixed point algebra~$\aB$, and let $\one \rightarrow Z \rightarrow \hG \rightarrow G \rightarrow \one$ be a central extension of compact groups.
Determine whether there exists a free C\Star dynamical system $(\hA,\hG,\ha)$ over $\aB$ such that ${\hA}^Z = \aA$, considered as free $G$-algebras.
In the affirmative case, the goal is to classify all possible lifts.
\end{ps}

Such a lift, if it exists, will be called a \emph{$\hG$-structure} for $(\aA,G,\alpha)$ - or simply a \emph{$\hG$-structure} - in analogy with the classical case of spin structures.

We approach the lifting problem via the \hyperref[sec:facsys]{\emph{factor system theory}} introduced in~\cite{SchWa15,SchWa17,SchWa21}, which offers a versatile framework for the study of free C\Star dynamical systems.
This approach not only offers the structural tools to analyze and construct $\hG$-structures but also yields new invariants that play a central role in their classification.
The main challenge lies in formulating a comprehensive characterization of when a $\hG$-structure exists.

\subsection*{The classical setting}
\label{sec:class_sett}

To contextualize our approach, we recall the lifting problem for classical \hyperref[sec:pb]{principal bundles}, which, however, are not assumed to be smooth or locally trivial, along with its key results.  
While the classical setting provides a rich source of examples and a well-developed obstruction theory, our approach differs markedly in nature.  
Nevertheless, it is guided by the same fundamental question:

\begin{cs}
Let $P$ be a principal $G$-bundle with base space $X$, and let $\one \rightarrow Z \rightarrow \hG \rightarrow G \rightarrow \one$ be a central extension of Lie groups.
Determine whether there exists a principal $\hG$-bundle $\hP$ over $X$ such that $\hP/Z = P$, considered as principal $G$-bundles.
In~the affirmative case, the goal is to classify all possible lifts.
\end{cs}

Throughout the following, all objects are assumed to lie in the smooth category.

\subsubsection*{Existence via crossed modules}
\label{sec:crossed_module}

We begin by recalling a complete cohomological obstruction to the classical lifting problem; for a detailed treatment, see~\cite{LaWa08,Neeb06}.
Let $P$ be a principal $G$-bundle with base space $X$.
Consider the associated global Atiyah sequence:
\begin{equation}
\label{eq:atiyah}
	1 \longrightarrow \Gau(P) \longrightarrow \Aut(P) \longrightarrow \Diff(X)_{[P]}\longrightarrow 1, 
\end{equation}
where $\Diff(X)_{[P]} \subseteq \Diff(X)$ denotes the open Lie subgroup consisting of diffeomorphisms that preserve the bundle class of $[P]$ under pullbacks.
Additionally, let $1 \to Z \to \hG \to G \to 1$ be a central extension of Lie~groups.
This data gives rise to a new central extension of (infinite-dimensional) Lie groups:
\begin{equation}
\label{eq:ind}
	1 \longrightarrow C^{\infty}(X,Z) \longrightarrow \widehat{\Gau}(P)\longrightarrow \Gau(P) \longrightarrow 1, 
\end{equation}
where 
\begin{equation*}
	\widehat{\Gau}(P) := \{f \in C^{\infty}(P,\hG) : (\forall p\in P)(\forall g\in G) \, f(p \acts g) = gf(p)g^{-1}\}.
\end{equation*}
Equations~\eqref{eq:atiyah} and~\eqref{eq:ind} define a \emph{crossed module} $\mu: \smash{\widehat{\Gau}}(P) \to \Aut(P)$. 
Its characteristic class lies in $H^3(\Diff(X)_{[P]},C^{\infty}(X,Z))$.
Applying the canonical derivation map yields a 3-de Rham cohomology class on $X$ with values in the Lie algebra of $Z$,  which serves as an obstruction to the existence of $\hP$ (see~\cite[Prop.~VI.3]{Neeb06}).
Laurent-Gengoux and~Wagemann~\cite{LaWa08} further showed that this class constitutes a complete obstruction to the classical lifting problem.
In particular, if such a $\hP$ exists, then $\Gau(\hP)$ is isomorphic to $\smash{\widehat{\Gau}}(P)$.

Beyond these obstruction results, to the best of our knowledge, no systematic classification results are available in this general setting, which highlights the novelty of our approach.

\subsubsection*{Spin structures}
\label{sec:spin_structure}

We now establish the link between the classical lifting problem and the notion of a spin structure.

\begin{defn}
\label{def:spin}
Let $(P,X,\SO(n),r_P,q_P)$, with $n \geq 3$, be a principal bundle.
Moreover, let $\Spin(n)$ be the universal cover of $\SO(n)$, and let $\phi: \Spin(n) \to \SO(n)$ be the corresponding 2-fold covering map.
A \emph{spin structure} on $P$ is a pair $((Q,X,\Spin(n),r_Q,q_Q),\Phi)$ consisting~of:
\begin{itemize}
\item
	a principal bundle $(Q,X,\Spin(n),r_Q,q_Q)$,  and
\item
	a 2-fold covering map $\Phi: Q \to P$ for which the following diagram commutes:
	\[
	\begin{tikzcd}
		Q \times \Spin(n) \arrow{r}{r_Q} \arrow[swap]{dd}{\Phi \times \phi} & Q \arrow{rd}{q_Q} \arrow{dd}{\Phi} \\
		& & X
		\\
		P \times \SO(n) \arrow{r}{r_P} & \arrow{ru}{q_P} P
	\end{tikzcd}
	\]
\end{itemize}
\end{defn}

The existence of a spin structure is thus a special case of the classical lifting problem for principal $\SO(n)$-bundles and the central extension $1 \rightarrow \mathbb{Z}_2 \rightarrow \Spin(n) \rightarrow \SO(n) \rightarrow 1$.
Its solution is encapsulated in the following result:

\begin{theorem}[{see, \eg,~\cite{Ply86} and~\cite[p.~40]{friedrich2000}}]
\label{thm:topspin}
A principal $\SO(n)$-bundle $P$ with base space~$X$ admits a spin structure if and only if its \emph{second Stiefel--Whitney class} $w_2(P) \in H^2(X,\mathbb{Z}_2)$ vanishes.
In that case, the inequivalent spin structures are parametrized, up to equivalence, by $H^1(X,\mathbb{Z}_2)$.
\end{theorem}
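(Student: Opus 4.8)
The plan is to prove \Cref{thm:topspin} by translating the existence and classification of spin structures into \v{C}ech cohomology, using the exact sequence of sheaves of smooth functions induced by the central extension $1 \to \mathbb{Z}_2 \to \Spin(n) \to \SO(n) \to 1$. First I would fix a good open cover $\{U_i\}$ of $X$ and a cocycle $g_{ij}\colon U_i \cap U_j \to \SO(n)$ representing $P$; a spin structure amounts to a lift of this cocycle, i.e.\ a family $\hg_{ij}\colon U_i \cap U_j \to \Spin(n)$ with $\phi \circ \hg_{ij} = g_{ij}$ that again satisfies the cocycle condition $\hg_{ij}\hg_{jk} = \hg_{ik}$. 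Since $\Spin(n) \to \SO(n)$ is a covering map (hence locally split smoothly) and the $U_i \cap U_j$ can be assumed contractible, smooth local lifts $\hg_{ij}$ always exist; the defect $c_{ijk} := \hg_{ij}\hg_{jk}\hg_{ik}^{-1}$ is then a $\mathbb{Z}_2$-valued \v{C}ech $2$-cocycle (here one uses that $\mathbb{Z}_2$ is central, so $c_{ijk}$ is independent of the chosen lifts up to coboundary), whose class is by definition $w_2(P) \in H^2(X,\mathbb{Z}_2)$.

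The existence statement then follows from unwinding definitions: a spin structure exists if and only if the local lifts $\hg_{ij}$ can be chosen so that all $c_{ijk} = 1$, and adjusting $\hg_{ij} \mapsto \hg_{ij} \varepsilon_{ij}$ by a $\mathbb{Z}_2$-valued $1$-cochain $(\varepsilon_{ij})$ changes $c_{ijk}$ exactly by the coboundary $\delta(\varepsilon)_{ijk}$. Hence a consistent lift exists precisely when $[c_{ijk}] = 0$ in $H^2(X,\mathbb{Z}_2)$, i.e.\ when $w_2(P) = 0$. For the classification, I would argue that two spin structures $(\hg_{ij})$ and $(\hg_{ij}')$ over the same $P$ differ by $\hg_{ij}' = \hg_{ij}\,\eta_{ij}$ with $\eta_{ij}\colon U_i \cap U_j \to \mathbb{Z}_2$, and the cocycle condition for both forces $(\eta_{ij})$ to be a $\mathbb{Z}_2$-valued $1$-cocycle; equivalence of spin structures (a bundle isomorphism $Q \to Q'$ over $\id_X$ compatible with $\Phi, \Phi'$) corresponds exactly to modifying $(\eta_{ij})$ by a $1$-coboundary. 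This gives a bijection between equivalence classes of spin structures on $P$ and $H^1(X,\mathbb{Z}_2)$ once one fixes a base spin structure, so the set is a torsor over $H^1(X,\mathbb{Z}_2)$ — in particular parametrized by it.

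The main obstacle, and the point requiring the most care, is the passage from cocycle-level manipulations to honest statements about principal bundles and their equivalences: one must verify that the assignment $(\hg_{ij}) \mapsto Q$ (glue $\bigsqcup_i U_i \times \Spin(n)$ via the $\hg_{ij}$) together with the map $\Phi$ induced by $\phi$ really produces the data of \Cref{def:spin} with the covering diagram commuting, and conversely that every spin structure arises this way up to equivalence — i.e.\ that the covering map $\Phi\colon Q \to P$ is automatically $\mathbb{Z}_2$-equivariant over $X$ and locally of the standard form. This is routine but is where the hypothesis $n \geq 3$ enters (so that $\pi_1(\SO(n)) = \mathbb{Z}_2$ and $\Spin(n)$ is the genuine double cover, making $\mathbb{Z}_2$ the relevant kernel), and where one invokes the standard dictionary between principal $H$-bundles and $H$-valued \v{C}ech $1$-cohomology. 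I would also note that well-definedness of $w_2(P)$ independently of the cover follows from refinement arguments, and that the whole discussion is the classical special case of the crossed-module obstruction recalled around \eqref{eq:atiyah}--\eqref{eq:ind}, here made explicit because $Z = \mathbb{Z}_2$ is discrete so the higher obstruction collapses to a degree-$2$ class.
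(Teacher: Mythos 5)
Your argument is the standard \v{C}ech-cohomological proof of this classical theorem, and it is correct as a matter of mathematics: local lifts exist over a good cover because $\phi$ is a covering map, the defect $c_{ijk}=\hg_{ij}\hg_{jk}\hg_{ik}^{-1}$ is $\mathbb{Z}_2$-valued and closed by centrality, its class represents $w_2(P)$, and adjusting lifts by a $\mathbb{Z}_2$-cochain shifts the defect by a coboundary, giving both the existence criterion and the $H^1(X,\mathbb{Z}_2)$-torsor structure on the set of inequivalent spin structures. Your remark on where $n\ge 3$ enters (so that the universal cover of $\SO(n)$ is precisely the double cover with kernel $\mathbb{Z}_2$) is the right thing to flag.

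That said, the paper does not supply a proof of this theorem at all: it is stated as a recalled classical result with pointers to Plymen and Friedrich, and it functions purely as motivating background for the noncommutative lifting problem. So there is no "paper's proof" to compare yours against in the strict sense. The closest the paper comes is in Section~\ref{sec:topspin}, where it \emph{re-derives the classification part} from its own machinery: applying Corollary~\ref{cor:class1} with $Z=\mathbb{Z}_2$, $\hG=\Spin(n)$, and $\aA=\Cont(P)$ yields that the spin structures with a fixed Picard homomorphism are parametrized by $H^2_{\mathrm{ab}}(\mathbb{Z}_2,\Cont(X,\mathbb{T}))$, and the chain of isomorphisms
\[
H^2_{\mathrm{ab}}(\mathbb{Z}_2,\Cont(X,\mathbb{T}))\cong \Cont(X,\mathbb{T})/\Cont(X,\mathbb{T})^2\cong H^1(X,\mathbb{Z}_2)
\]
recovers the classical answer. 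That route is deliberately indirect: it passes through the factor-system and equivariant-Picard formalism precisely because it is built to generalize to noncommutative $\aA$, whereas your \v{C}ech argument is more elementary and self-contained but is tied to transition functions and hence to the commutative, locally trivial setting. For the existence half, the paper in that section defers to the crossed-module obstruction of Neeb and Laurent-Gengoux--Wagemann (Equations~\eqref{eq:atiyah}--\eqref{eq:ind}) rather than to the Stiefel--Whitney class directly, again because that formulation has a noncommutative analogue; your proof instead works with $w_2$ directly, which is the more efficient route if one only cares about the commutative case. In short: your proof is correct and is the standard one, but it is proving a result the paper merely quotes; the paper's added value lies in showing that its abstract machinery \emph{reproduces} the classical classification, not in offering an alternative elementary proof of it.
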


\begin{remark}
An orientable Riemannian manifold $X$ is said to be a \emph{Riemannian spin manifold} if its frame bundle admits a spin structure.
\end{remark}

\subsection*{Relation to graded ring theory}

Beyond the operator-algebraic setting, the lifting problem has a purely algebraic analogue in the context of graded rings, which, to the best of our knowledge, has not yet been considered.
To formulate the corresponding problem, recall that freeness of a classical group action corresponds, in the purely algebraic setting, to the well-known notion of a \emph{strongly graded ring}.

Let $S$ be a strongly $G$-graded ring, and let $\hG$ be an overgroup of $G$.
A natural question is whether there exists a strongly $\hat{G}$-graded ring $\hat{S}$ such that
\begin{equation*}
    \hat{S}_e = S_e
    \qquad \text{and} \qquad
    \bigoplus_{g \in G} \hat{S}_g = S.
\end{equation*}
In the affirmative case, the goal is to classify all possible extensions.

Our results admit a natural adaptation to this algebraic framework, which merits a separate treatment.

\subsection*{Organization of the paper}

In Section~\ref{sec:prelim}, we set out the necessary definitions, notations,
and results that will be referenced throughout the paper.

In Section~\ref{sec:extension_problem}, which forms the heart of the paper, we resolve the~\hyperref[sec:goal]{lifting problem} formulated at the outset. 
More precisely, our strategy unfolds as follows:

In Section~\ref{sec:analyzing_G_structures}, we initiate the study of a given~\hyperref[sec:goal]{$\hG$-structure} by extracting and analyzing its underlying structural data.
In particular, we identify key invariants and~characterize its ($Z$-)gauge group via crossed homomorphisms (see~Theorem~\ref{thm:gau}).

In Section~\ref{sec:classification}, we establish a classification of such $\hG$-structures.
Assuming the existence of a solution, we show that the set of all equivalence classes of $\hG$-structures forms a principal homogeneous space under the cohomology group $H^2(Z^*,\mathcal{U}(Z(\aA)^G))$ (see Corollary~\ref{cor:class1}).

In Section~\ref{sec:constructing_G_structures}, we resolve the existence problem by presenting a systematic, step-by-step construction of a $\hG$-structure, which yields the main result of the paper (see Theorem~\ref{thm:main}). 
This proceeds in four steps: we first construct a candidate algebra $\hA$ extending $\aA$; then lift $\alpha$ to a $\hG$-action on $\hA$ (see Lemma~\ref{lem:lift} and Theorem~\ref{thm:grp_hom_class}); ensure the continuity of this action (see Lemma~\ref{lem:continuity}); and finally prove that the resulting C\Star dynamical system is free (see Theorem~\ref{thm:freeness}).

In Section~\ref{sec:examples}, we present a range of examples illustrating the results of Section~\ref{sec:extension_problem}. 
More precisely:  
In Section~\ref{sec:toyspin}, we give a simple example illustrating the underlying concepts;
in Section~\ref{sec:qtori}, we study coverings of quantum tori, which yield natural instances of the lifting problem; 
in Section~\ref{sec:connes-landi}, we develop a concrete noncommutative $\Spin(3)$-structure based on the Connes--Landi spheres, and in Section~\ref{sec:quantumspin}, we present the general framework for noncommutative spin structures.
Within this framework, we address the existence and classification of spin structures for a given \emph{noncommutative frame bundle};
in Section~\ref{sec:topspin}, we apply our results to classical \hyperref[sec:pb]{principal bundles}, thereby analyzing the~\hyperref[sec:class_sett]{classical lifting problem};  
and finally, in Section~\ref{sec:QT3}, we construct a noncommutative $\mathbb{T}^3$-structure from a suitable quantum 3-torus.

In Appendix~\ref{sec:facsys}, we briefly review the relevant aspects of factor system theory.

\subsection*{Future directions}

In future work, we aim to construct new examples of noncommutative Riemannian spin geometries, extending the classical theory by analogy:
Assume the setting of the~\hyperref[sec:goal]{lifting problem}, and let $(\hA,\hG,\ha)$ be a $\hG$-structure.  
Let $(\hat{\sigma},\hat{V})$ be a finite-dimensional unitary representation of~$\hG$, and consider the associated C\Star correspondence
\begin{equation*}
    \Gamma_{\hA}(\hat{\sigma}) := \{x \in \hA \otimes \hat{V} \mid (\ha_{\hg} \otimes \hat{\sigma}_{\hg})(x) = x \text{ for all } \hg \in \hG\},
\end{equation*}
which serves as a noncommutative analogue of the spinor module.
The goal is to construct a spectral triple on the C\Star algebra of adjointable operators on $\Gamma_{\hA}(\hat{\sigma})$, using the geometry of the $\hG$-structure $(\hA,\hG,\ha)$ and a given spectral triple on the fixed-point algebra~$\aB$.  
Such constructions may naturally arise from noncommutative frame bundles, as described in~\cite{Wa23}.

We also aim to adapt the notion of a crossed module, which - as previously noted - plays a central role in the~\hyperref[sec:class_sett]{classical lifting problem} (see also~\cite{MickWa16,Neeb07}).
Given the close connections between crossed modules, gerbes, and higher gauge theory, this direction provides a natural pathway toward developing a theory of quantum gerbes and advancing noncommutative gauge theory.
At present, however, such an approach appears to~be~technically out of reach.

Beyond these directions, we expect that with further technical refinements, the present results will extend to non-Abelian extensions of compact groups.

\section{Preliminaries and notation}
\label{sec:prelim}

Our study deals with noncommutative principal bundles.
This section exhibits the most fundamental definitions and notations in use.

At first we provide some standard references.
For a comprehensive introduction to the theory of fibre bundles, especially principal bundles and (their associated) vector bundles, we refer to Husem\"oller's monograph~\cite{Huse94} and the influential exposition~\cite{Kob69I} by Nomizu and Kobayashi. 
For a recent account of the theory of Non-commutative Riemannian spin geometry, we recommend the excellent volume~\cite{gracia2000} by Gracia-Bond{\'\i}a, Varilly, and Figueroa.
The theory of operator algebras is well covered in the authoritative works of Blackadar~\cite{BB06} and Pedersen~\cite{Ped18}.
Finally, for background on abstract group cohomology - specifically cochains, cocycles, and coboundaries - we refer to the classical text~\cite{MacLane95} by Mac Lane.

We now fix some general notation.
The identity element of a group is denoted by $e$.
For a unital C\Star algebra~$\aA$, we write $1_\aA$ for its unit, $\mathcal{U}(\aA)$ for its group of unitary elements, and $Z(\aA)$ for its center.
Unless stated otherwise, $\otimes$ denotes the minimal C\Star tensor product.

\subsection{Principal bundles}
\label{sec:pb}

A \emph{principal bundle} is a quintuple $(P,X,G,r,q)$, where 
\begin{itemize}
\item	
	$P$ and $X$ are locally compact spaces,
\item
	$G$ is a locally compact group, the \emph{structure group},
\item
	$r: P \times G \to P$ is a continuous right action of $G$ on $P$, which is free and proper,
\item
	$q: P \to X$ is a continuous and open surjection,
\end{itemize}
subject to the following conditions:
\begin{itemize}
\item
	$q(r(p,g)) = q(p)$ for all $p \in P$ and $g \in G$;
\item
	for each $x \in X$, the fibre $q^{-1}(x)$ is $G$-equivariantly homeomorphic to $G$.
\end{itemize}
When no confusion can arise, we suppress the maps $r$ and $q$ and simply say that $P$ is a \emph{principal $G$-bundle over $X$}. 
In this case we write $p \acts g := r(p,g)$ for $p \in P$ and $g \in G$.

In the smooth category, $P$ and $X$ are smooth manifolds, $G$ is a Lie group, $r$ and $q$ are smooth maps, and by the Quotient Theorem $P$ is locally trivial.

\subsection{Representations of groups}

Let $G$ be a compact group.  
All representations of $G$ are assumed to be finite-dimensional and unitary, unless stated otherwise. 
A representation $\sigma: G \to \mathcal{U}(V_\sigma)$ is denoted by $(\sigma,V_\sigma)$, or simply by $\sigma$.
In particular, we use $\one$ to denote the trivial representation.

The set of equivalence classes of irreducible representations of $G$ is denoted by $\Irrep(G)$.
By slight abuse of notation, elements of $\Irrep(G)$ are also referred to by $\sigma$, and when necessary, a representative representation $(\sigma, V_\sigma)$ is chosen for $\sigma \in \Irrep(G)$.
When $G$ is compact Abelian, its Pontryagin dual is denoted by $G^*$, with elements typically written as $\chi$.


\subsection{C\Star dynamical systems}
\label{sec:cstar}

Let $\aA$ be a unital C\Star algebra, let $G$ be a compact group, and let $\alpha: G \to \Aut(\aA)$ be a strongly continuous group homomorphism.  
We refer to such a triple $(\aA,G,\alpha)$ as a \emph{C\Star dynamical system} and adopt the standard shorthand $\alpha_g := \alpha(g)$ for $g \in G$.
The corresponding fixed point algebra is typically denoted by $\aB$.
When emphasizing its role, we may refer to $(\aA,G,\alpha)$ as a \emph{C\Star dynamical system over $\aB$}.
Conversely, when the focus is on $\aA$, we may simply say that $\aA$ is a \emph{$G$-algebra}, leaving the action $\alpha$ implicit.

The conditional expectation $P_\one$ onto $\aB$ defines a right $\aB$-valued inner product on $\aA$ by
\begin{equation}
\label{eq:innprod}
	\langle x, y \rangle_\aB := P_\one(x^*y) = \int_G \alpha_g(x^*y) \; dg,
	\qquad
	x,y \in \aA.
\end{equation}
Completing $\aA$ with respect to the resulting norm yields a right Hilbert $\aB$-module $L^2(\aA)$, on which $\aA$ acts faithfully by adjointable operators via the \Star representation $\lambda: \aA \to \End(L^2(\aA))$, densely defined by $\lambda(x)y := x \cdot y$ for $y \in \aA \subseteq L^2(\aA)$.
Thus we may view $\aA$ as the subalgebra $\lambda(\aA) \subseteq \End(L^2(\aA))$.

For each $g \in G$, let $U_g$ be the unitary operator on $L^2(\aA)$, densely defined by $U_g(x) := \alpha_g(x)$ for $x \in \aA \subseteq L^2(\aA)$.
The resulting map $G \ni g \mapsto U_g \in \mathcal{U}(L^2(\aA))$ is strongly continuous and~implements the \Star automorphisms $\alpha_g$, $g \in G$, via
\begin{equation*}
    \lambda(\alpha_g(x)) = U_g \lambda(x) U_g^*,
    \qquad
	x \in \aA.
\end{equation*}

Like any strongly continuous representation of~$G$, the algebra $\aA$ admits a decomposition into isotypic components $A(\sigma) := P_\sigma(\aA)$, $\sigma \in \Irrep(G)$, where 
\begin{equation*}
	P_\sigma(x) := \dim(\sigma) \cdot \int_G \Tr(\sigma_g^*) \, \alpha_g(x) \; dg,
    \qquad
	x \in \aA.
\end{equation*}
This decomposition is realized by the algebraic direct sum
\begin{equation}
\label{eq:A_f}
	\aA_f:= \bigoplus_{\sigma \in \Irrep(G)} A(\sigma),
\end{equation}
which forms a dense \Star subalgebra of $\aA$. 

If $G$ is compact Abelian, then for each $\chi \in G^*$, the isotypic component 
\begin{equation*}
    A(\chi) = \{x \in \aA : (\forall g\in G) \, \alpha_g(x) = \chi(g) \cdot x\}
\end{equation*}
is \emph{almost} a Morita equivalence $\aB$-bimodule, equipped with the canonical $\aB$-bimodule~structure and the inner products 
\begin{equation*}
    {}_\aB\langle x, y \rangle := x y^*
    \qquad
    \text{and}
    \qquad
    \langle x, y \rangle_\aB := x^* y,
    \qquad
    x,y \in \aA.
\end{equation*}
However, the linear spans of these inner products need not be dense in~$\aB$.
Despite this, the norms induced by the left and right inner products coincide, and the resulting topology on $A(\chi)$ agrees with the subspace topology inherited from $\aA$.

\subsection{Freeness}
\label{sec:free}

A C\Star dynamical system $(\aA,G,\alpha)$ is called \emph{free} if the \emph{Ellwood map} 
\begin{equation*}
	\Phi: \aA \otimes_{\text{alg}} \aA \rightarrow \Cont(G,\aA),
	\qquad
	\Phi(x \otimes y)(g) := x \alpha_g(y)
\end{equation*}
has dense range with respect to the canonical C\Star norm on $\Cont(G,\aA)$.
This notion, introduced by Ellwood~\cite{Ell00} in the setting of quantum group actions, is equivalent to Rieffel's notion of saturatedness~\cite{Rieffel91} and to the Peter--Weyl--Galois condition~\cite{BaCoHa15}.

According to \cite[Prop.~7.1.12 \& Thm.~7.2.6]{Phi87}, a continuous action $r:P\times G\rightarrow P$ of a compact group $G$ on a compact space $P$ is free in the usual sense if and only if the induced C\Star dynamical system $(\Cont(P),G,\alpha)$, where 
\begin{equation*}
    \alpha_g(f)(p) := f(r(p,g))
\end{equation*}
for all $p \in P$ and $g \in G$, is free in the sense of Ellwood. 
Free C\Star dynamical systems thus provide a natural framework for noncommutative principal bundles.

Among free C\Star dynamical systems, the particularly tractable \emph{cleft} C\Star dynamical systems (see~\cite{SchWa16}) are distinguished, as noncommutative principal bundles, by the triviality of all associated vector bundles.
Despite this constraint, they support a surprisingly rich array of noncommutative phenomena.

A fundamental result underpinning our approach is the characterization of freeness established in~\cite[Lem.~3.3]{SchWa17}, which states that
a C\Star dynamical system $(\aA,G,\alpha)$ is free if and only if, for each $\sigma \in \Irrep(G)$, there exist a finite-dimensional Hilbert space~$\hH_\sigma$ and an isometry $s(\sigma) \in \aA \otimes \End(V_\sigma,\hH_\sigma)$ that satisfies $\alpha_g\bigl(s(\sigma)\bigr)=s(\sigma) \cdot \sigma_g$ for all $g \in G$.

\subsection{The equivariant Picard group}
\label{sec:pic}

Let $(\aA,G,\alpha)$ be a C\Star dynamical system.

A \emph{Morita self-equivalence over $(\aA,G,\alpha)$} is a Morita equivalence $\aA$-bimodule $M$ equipped with a strongly continuous $G$-action $\mu$ on $M$ by linear automorphisms such that
\begin{equation}
\label{eq:MSE}
    \alpha_g(\langle x,y \rangle) = \langle \mu_g(x), \mu_g(y) \rangle
\end{equation}
for all $g \in G$ and $x,y \in M$; here, the equation is understood to hold for both inner products.

Two Morita self-equivalences $M$ and $M'$ over $(\aA,G,\alpha)$ are said to be \emph{equivalent} if there exists a $G$-equivariant Morita equivalence $\aA$-bimodule isomorphism $\psi:M \to M'$, \ie, an $\aA$-bimodule isomorphism which is $G$-equivariant and preserves the inner products.
The equivalence class of a Morita self-equivalence $M$ over $(\aA,G,\alpha)$ is denoted by $[M]$.

The set of all equivalence classes of Morita self-equivalences over $(\aA,G,\alpha)$ forms an Abelian group under the internal tensor product $\otimes_\aA$.
This group, denoted by $\Pic_G(\aA)$, is called the \emph{equivariant Picard group} of $(\aA,G,\alpha)$.
If $G$ is trivial, then this group coincides with the usual Picard group of $\aA$ (see, \eg,~\cite[Sec.~3]{BrGrRi77}).



The following statements, although well known to specialists, appear to be only implicitly recorded in the literature.

\begin{lemma}[{\cf~\cite[Prop.~5.4 \& Cor.~5.5]{SchWa15}}]
\label{lem:MSE}
Let $(\aA,G,\alpha)$ be a C\Star dynamical system, and let $M$ be a Morita self-equivalence over $(\aA,G,\alpha)$.
Then the following assertions hold:
\begin{enumerate}[label={\arabic*.},ref=\ref{lem:MSE}.{\arabic*}]
\item\label{lem:MSE:autME}
    The map $\phi_M: \mathcal{U}(Z(\aA)^G) \to \Aut(M)$ defined by $\phi_M(u)(m) := um$ is a group~isomorphism.
\item\label{lem:MSE:isoME}
    There is a unique group automorphism $\Delta_M$ of $\mathcal{U}(Z(\aA)^G)$ determined by the relation $\Delta_M(u)m = mu$ for all $m \in M$.
\end{enumerate}
\end{lemma}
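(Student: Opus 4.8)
The plan is to exploit the fact that a Morita equivalence $\aA$-bimodule $M$ is, in particular, a full right Hilbert $\aA$-module whose algebra of (adjointable, $\aA$-linear) operators is canonically identified with $\aA$ acting on the left. The first step is to establish assertion~\ref{lem:MSE:autME}: given $u \in \mathcal{U}(Z(\aA)^G)$, the formula $\phi_M(u)(m) := um$ clearly defines an $\aA$-bimodule map (the left $\aA$-module structure is respected because $u$ is central, the right one trivially), and it preserves both inner products because $u$ is unitary and central --- for the right inner product, $\langle um, um' \rangle_\aA = \langle m, u^*u m' \rangle_\aA = \langle m, m' \rangle_\aA$, and symmetrically for the left inner product using centrality to move $u$ across. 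It is $G$-equivariant precisely because $u$ is $G$-invariant: $\mu_g(um) = \alpha_g(u)\mu_g(m)= u\,\mu_g(m)$, where the first equality uses compatibility of $\mu$ with the bimodule structure and \eqref{eq:MSE}-type reasoning. So $\phi_M$ lands in $\Aut(M)$, and it is visibly a group homomorphism. For injectivity: if $um = m$ for all $m$, then by fullness $u = 1_\aA$. For surjectivity: given $\psi \in \Aut(M)$, one checks that $\psi$ commutes with the right $\aA$-action, hence by the standard identification of $\aA$-linear adjointable operators on $M$ with the left multiplications $M \cong \mathcal{K}(M_\aA) \cong \aA$ (here using that $M$ is a Morita equivalence bimodule), $\psi$ is left multiplication by some $u \in \aA$; that $u$ is unitary follows from $\psi$ preserving the right inner product, that it is $G$-invariant from $G$-equivariance of $\psi$, and that it is central from $\psi$ respecting the left $\aA$-action (equivalently, from the left inner product).

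For assertion~\ref{lem:MSE:isoME}: for fixed $u \in \mathcal{U}(Z(\aA)^G)$, the map $m \mapsto mu$ (right multiplication) is again an automorphism of $M$ in $\Aut(M)$ --- by the exact same verifications as above, now using the left $\aA$-action and the left Picard-type identification $M \cong \mathcal{K}({}_\aA M) \cong \aA^{\mathrm{op}}$, or more directly just noting $mu$ visibly defines an element of $\Aut(M)$. Hence by \ref{lem:MSE:autME} there is a unique $\Delta_M(u) \in \mathcal{U}(Z(\aA)^G)$ with $\Delta_M(u) m = mu$ for all $m \in M$; uniqueness of $\Delta_M(u)$ is exactly the injectivity of $\phi_M$. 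The assignment $u \mapsto \Delta_M(u)$ is then a group homomorphism because $\Delta_M(uv)m = m(uv) = (\Delta_M(v)m)u$ wait --- more carefully, $\Delta_M(uv)m = muv$ and $\Delta_M(u)\Delta_M(v)m = \Delta_M(u)(mv) = (mv)u$; since $u,v$ are central these agree, so $\Delta_M$ is a homomorphism. It is bijective with inverse $\Delta_M$ applied to the inverse module, or simply because it is given by $\Delta_M(u) = \phi_M^{-1}$ of the right-multiplication automorphism, which is an invertible correspondence; concretely $u \mapsto \Delta_M(u)$ and $u \mapsto \Delta_{M}^{-1}$ can be seen to compose to the identity using that right multiplication by $u$ followed by the identification of its "left-multiplication form" returns $u$ via the bimodule compatibility. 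This gives the asserted group automorphism.

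The main obstacle, I expect, is pinning down cleanly the identification $\Aut(M) \cong \mathcal{U}(Z(\aA)^G)$ in a way that simultaneously tracks the bimodule structure, the $G$-equivariance, \emph{and} the inner-product preservation, since an element of $\Aut(M)$ as defined in the paper is required to commute with \emph{both} $\aA$-actions and preserve \emph{both} inner products. The two one-sided Morita identifications $\aA \cong \mathcal{L}_{\aA}(M)$ and $\aA^{\mathrm{op}} \cong {}_{\aA}\mathcal{L}(M)$ together force the implementing element into $Z(\aA)$; unitarity comes from inner-product preservation; $G$-invariance from equivariance --- but assembling these compatibly, and in particular checking that $\Delta_M$ is well defined independently of which inner product one uses, is the delicate bookkeeping. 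Once that is in hand, both statements follow, and in fact one sees $\Delta_M$ measures the failure of $M$ to be a "symmetric" $Z(\aA)^G$-bimodule, which is the invariant that will be used later. This is essentially \cite[Prop.~5.4 \& Cor.~5.5]{SchWa15}, adapted to the equivariant unital setting.
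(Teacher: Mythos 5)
Your proposal is correct and follows essentially the same route as the paper: reduce to the non-equivariant statement of~\cite[Prop.~5.4 \& Cor.~5.5]{SchWa15} (an automorphism of $M$ is left multiplication by a unitary central element), and then use $G$-equivariance of the automorphism together with compatibility of the inner products to force the implementing element to be $G$-invariant. The only presentational difference is that the paper cites~\cite{SchWa15} for everything except the $G$-invariance step (for which it spells out the left-inner-product computation), whereas you re-derive the Morita-theoretic identification from scratch and leave the $G$-invariance step as a one-line appeal to equivariance.
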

\begin{proof}
Let $\psi \in \Aut(M)$.
By~\cite[Prop.~5.4]{SchWa15}, there exists $u \in \mathcal{U}(Z(\aA))$ such that $\psi(m) = um$ for all $m \in M$.
Here, we only show that $\psi$ must in fact come from an element $u \in \mathcal{U}(Z(\aA)^G)$.  
The remainder of the proof then proceeds exactly as in~\cite[Prop.~5.4 \& Cor.~5.5]{SchWa15}.

Let $\mu$ be a strongly continuous $G$-action on $M$ by linear automorphisms satisfying~Equation~\eqref{eq:MSE}. 
Then, for $g \in G$ and $m,m' \in M$,
\begin{equation*}
    {}_\aA\langle \mu_g(\psi(m)), \mu_g(m') \rangle
    =
    \alpha_g(u) \alpha_g({}_\aA\langle m, m' \rangle).
\end{equation*}
On the other hand, using the $G$-equivariance of $\psi$,
\begin{equation*}
    {}_\aA\langle \mu_g(\psi(m)), \mu_g(m') \rangle
    =
    {}_\aA\langle \psi(\mu_g(m)), \mu_g(m') \rangle
    =
    u \alpha_g({}_\aA\langle m, m' \rangle).
\end{equation*}
Since $M$ is a Morita equivalence $\aA$-bimodule, this implies that $\alpha_g(u) = u$ for all $g \in G$, \ie, $u \in \mathcal{U}(Z(\aA)^G)$, as claimed.
\end{proof}

\section{The lifting problem}
\label{sec:extension_problem}

In this section, we outline our approach to the \hyperref[sec:goal]{lifting problem}.
We begin by fixing the following data:
\begin{itemize}
\item
    a free C\Star dynamical system $(\aA,G,\alpha)$ with fixed point algebra~$\aB$;
\item
	a central extension $\one \rightarrow Z \xrightarrow{\iota} \hG \xrightarrow{q} G \rightarrow \one$ of compact groups.
\end{itemize}
The central question is whether there exists a free C\Star dynamical system $(\hA,\hG,\ha)$ over $\aB$ such that ${\hA}^Z = \aA$ as free $G$-algebras.
Equivalently, in the terminology introduced earlier, we ask whether $(\aA,G,\alpha)$ admits a \hyperref[sec:goal]{$\hG$-structure}.
When such lifts exist, our goal is to classify them.

\subsection{\texorpdfstring{Analyzing a given $\mathbf{\hG}$-structure}
{Analyzing a given G-hat-structure}}
\label{sec:analyzing_G_structures}

Suppose we are given a $\hG$-structure $(\hA,\hG,\ha)$.
Our first objective is to extract the essential structural data it encodes.

The condition ${\hA}^Z = \aA$, viewed as an identity of free $G$-algebras, implies that $\ha$ coincides on $\aA$ with $\alpha$, \ie, ${\ha_{\hg}} \rvert_{\aA} = \alpha_{q(\hg)}$ for all $\hg \in \hG$. 

Restricting $\ha$ to $Z$, and denoting it by $\theta$, yields a C\Star dynamical system $(\hA,Z,\theta)$ with fixed point algebra $\aA$. 
Let $\hiso(\chi)$, $\chi \in Z^*$, denote the corresponding isotypic components.
The centrality of $Z \subseteq \hG$ ensures that $\theta$ and $\ha$ commute, \ie, $\theta_z \circ \ha_{\hg} = \ha_{\hg} \circ \theta_z$ for all $z \in Z$ and $\hg \in \hG$, so that each $\hiso(\chi)$, for $\chi \in Z^*$, is invariant under $\ha$.

Crucially, $(\hA,Z,\theta)$ remains free~\cite[Prop.~3.17]{SchWa15}.
Since $Z$ is compact Abelian, it follows that each $\hiso(\chi)$, for $\chi \in Z^*$, is a Morita equivalence $\aA$-bimodule \cite[Cor.~3.11]{SchWa15}.
Moreover, these components naturally define Morita self-equivalences over $(\aA,\hG,\alpha \circ q)$, as can readily be checked.
The multiplication in $\hA$ induces, for each $(\chi,\chi') \in Z^* \times Z^*$, a map
\begin{equation}
\label{eq:multiplication}
    m_{(\chi,\chi')} : \hiso(\chi) \otimes_\aA \hiso(\chi') \to \hiso(\chi+\chi'),
    \qquad
    m_{(\chi,\chi')}(s \otimes t) := st,
\end{equation}
which is a $\hG$-equivariant Morita equivalence $\aA$-bimodule isomorphism, \ie, $\hiso(\chi) \otimes_\aA \hiso(\chi')$ and $\hiso(\chi+\chi')$ are equivalent.
These compatibility maps imply that the assignment
\begin{equation}
\label{eq:pic}
    \mathsf{p}: Z^* \to \Pic_{\hG}(\aA),
    \qquad
    \mathsf{p}(\chi) := \left[\hiso(\chi)\right]
\end{equation}
defines a group homomorphism, called the \emph{Picard homomorphism} associated with~$(\hA,\hG,\ha)$.
As is standard in this setting, it induces a natural $Z^*$-module~structure on $\mathcal{U}(Z(\aA)^G)$ via a group homomorphism 
\begin{equation}
\label{eq:froh}
    \Delta: Z^* \to \Aut(\mathcal{U}(Z(\aA)^G)),
\end{equation}
called the \emph{Fröhlich map} associated with~$(\hA,\hG,\ha)$.
The natural inclusion~$\Pic_{\hG}(\aA) \subseteq \Pic(\aA)$ extends this action to a $Z^*$-module structure on $\mathcal{U}(Z(\aA))$, which we continue to denote by the same symbol for convenience.
This structure enables the use of group cohomology techniques in what follows.

At this stage, we fix a \hyperref[sec:facsys]{factor system} $(\hH,\gamma,\omega)$ of $(\hA,Z,\theta)$, by choosing, for each $\chi \in Z^*$, a finite-dimensional Hilbert space $\hH_\chi$ and an~isometry $s(\chi)\in \hA \otimes \End(\C,\hH_\chi)$ such that $\theta_z(s(\chi)) = \chi(z) \cdot s(\chi)$ for all $z \in Z$.
For $\one \in Z^*$, we set $\hH_\one := \C$ and $s(\one) := \one_\aA$, as usual.

Given this choice, the isotypic component $\hiso(\chi)$, for $\chi \in Z^*$, can be written as
\begin{equation}
\label{eq:iso}
    \hiso(\chi) = \{y s(\chi) : y\in \aA \otimes \End(\hH_\chi,\C)\}.
\end{equation}
The action $\ha$ on $\hiso(\chi)$ is determined by the continuous map
\begin{equation}
\label{eq:v}
    v(\chi): \hG \to \aA \otimes \End(\hH_\chi),
    \qquad
    v(\chi)(\hg) := v_{\hg}(\chi) := \ha_{\hg}(s(\chi)) s(\chi)^*,
\end{equation}
which satisfies the equivariance relation 
\begin{equation}
\label{eq:v_eq}
    v_{z\hg}(\chi) = \chi(z) \cdot v_{\hg}(\chi)
\end{equation}
for all $z \in Z$ and $\hg \in \hG$.
Importantly, for each $\hg \in \hG$, the element $v_{\hg}(\chi)$ is a partial~isometry with initial projection $s(\chi) s(\chi)^*$ and final projection $\alpha_{q(\hg)}(s(\chi) s(\chi)^*)$.
With this notation, the action $\ha$ on $\hiso(\chi)$ takes the form
\begin{equation}
\label{eq:res_iso}
	\ha_{\hg}(y s(\chi)) 
    =
    \alpha_{q(\hg)}(y) v_{\hg}(\chi) s(\chi)
\end{equation}
for all $\hg \in \hG$ and $y\in \aA \otimes \End(\hH_\chi,\C)$.
Notably, the~\hyperref[eq:froh]{Fr\"ohlich map} is implemented, for
$c \in \mathcal{U}(Z(\aA))$, by
\begin{equation*}
    \Delta_\chi^{-1}(c) = s(\chi)^* c\, s(\chi).
\end{equation*}
The right-hand side is independent of the choice of factor system, as all factor systems associated with $(\hA,Z,\theta)$ are conjugate (see~Section~\ref{sec:facsys}).

The group
\begin{equation*}
	\Aut_Z(\hA) := \left\{\hfi \in \Aut(\hA) : (\forall z \in Z) \, \theta_z \circ \hfi = \hfi \circ \theta_z\right\}
\end{equation*}
fits into a short exact sequence
\begin{gather*}
	1 \longrightarrow \Gau_Z(\hA) \longrightarrow \Aut_Z(\hA) \longrightarrow \Aut(\aA)_{[\hA]} \longrightarrow 1,
	\\
	\shortintertext{where}
	\notag
	\Gau_Z(\hA) := \{\hfi \in \Aut_Z(\hA) : \hfi \rvert_{\aA} = \id_\aA\}
\end{gather*} 
denotes the \emph{gauge group} of $(\hA,Z,\theta)$, and $\Aut(\aA)_{[\hA]}$ is the group of \Star automorphisms of $\aA$ that can be lifted to elements of $\Aut_Z(\hA)$.
Note that $\theta(Z) \subseteq \Gau_Z(\hA)$.
Furthermore, the assumption that $(\hA,\hG,\ha)$ is a $\hG$-structure guarantees that $\alpha(G) \subseteq \Aut(\aA)_{[\hA]}$ - a condition we will make use of in Section~\ref{sec:constructing_G_structures}.

By~\cite[Thm.~4.1]{SchWa21}, $\Aut(\aA)_{\left[\hA\right]}$ can be described in terms of the factor system $(\hH, \gamma, \omega)$ as
\begin{equation}
\label{eq:aut(A)}
	\Aut(\aA)_{[\hA]} = \left\{\varphi \in \Aut(\aA) : (\hH,\gamma,\omega) \sim (\hH,\gamma^\varphi,\omega^\varphi) \right\},
\end{equation}
where $\sim$ is defined in Definition~\ref{def:conjugacy} and the twisted factor system $(\hH,\gamma^\varphi,\omega^\varphi)$ is given by
\begin{equation*} 
    \gamma^\varphi_\chi := \varphi \circ \gamma_\chi \circ \varphi^{-1}
    \qquad
    \text{and}
    \qquad
    \omega^\varphi(\chi,\chi') := \varphi(\omega(\chi,\chi'))
    \label{eq:beta_twist} 
\end{equation*} 
for all $\chi,\chi' \in Z^*$.

We now give a description of the gauge group $\Gau_Z(\hA)$ in terms of data associated with $Z$ and $\aA$.
To phrase our result, we denote by 
\begin{equation*}
    Z^1_\Delta(Z^*,\mathcal{U}(Z(\aA)))
\end{equation*}
the Abelian group of \emph{crossed homomorphisms}, \ie, maps $c: Z^* \to \mathcal{U}(Z(\aA))$ satisfying 
\begin{equation*}
    c(\chi+\chi') = c(\chi) \Delta_\chi(c(\chi'))
\end{equation*}
for all $\chi,\chi' \in Z^*$.
Note that each such map $c$ satisfies $c(0)=1_\aA$.
With this we wish to establish the following theorem:

\begin{theorem}
\label{thm:gau}
The gauge group $\Gau_Z(\hA)$ is isomorphic to $Z^1_\Delta(Z^*,\mathcal{U}(Z(\aA)))$.
\end{theorem}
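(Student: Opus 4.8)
The plan is to construct explicit mutually inverse maps between $\Gau_Z(\hA)$ and $Z^1_\Delta(Z^*,\mathcal{U}(Z(\aA)))$ by exploiting the isotypic decomposition $\hA_f = \bigoplus_{\chi \in Z^*} \hiso(\chi)$ together with the concrete description \eqref{eq:iso} of each component in terms of the chosen isometries $s(\chi)$. The key observation is that a gauge transformation $\hfi \in \Gau_Z(\hA)$ commutes with $\theta$, hence preserves each $\hiso(\chi)$; restricted to $\hiso(\chi)$ it is an automorphism of $\hiso(\chi)$ as a Morita self-equivalence over $(\aA,Z,\theta)$ (it fixes $\aA$ pointwise and is compatible with both inner products, which are built from multiplication in $\hA$). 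By Lemma~\ref{lem:MSE:autME}, every such automorphism has the form $m \mapsto c(\chi) m$ for a unique $c(\chi) \in \mathcal{U}(Z(\aA)^G) \subseteq \mathcal{U}(Z(\aA))$; concretely $\hfi(s(\chi)) = c(\chi) s(\chi)$. This defines the candidate map $\Gau_Z(\hA) \to Z^1_\Delta(Z^*,\mathcal{U}(Z(\aA)))$, $\hfi \mapsto c$, and conversely, given a crossed homomorphism $c$, one declares $\hfi_c$ on $\hiso(\chi)$ by $y s(\chi) \mapsto c(\chi) y s(\chi)$ (equivalently $\hfi_c(s(\chi)) := c(\chi)s(\chi)$) and extends by density from $\hA_f$.

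First I would verify that $\hfi \mapsto c$ lands in $Z^1_\Delta$: applying $\hfi$ to the product $s(\chi)s(\chi') = \omega(\chi,\chi')\,s(\chi+\chi')$ inside $\hA$ (or more directly to the multiplication map \eqref{eq:multiplication}), multiplicativity of $\hfi$ forces $c(\chi)\,s(\chi)\cdot c(\chi')\,s(\chi') = c(\chi+\chi')\,s(\chi)s(\chi')$; moving $c(\chi')$ past $s(\chi)$ produces $s(\chi)c(\chi')s(\chi)^{*} = \Delta_\chi(c(\chi'))$ (using that $s(\chi)$ is an isometry and $c(\chi') \in Z(\aA)$ so the partial-isometry relation collapses correctly), yielding the cocycle identity $c(\chi+\chi') = c(\chi)\Delta_\chi(c(\chi'))$. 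Next, for the reverse direction, I would check that $\hfi_c$ as defined on $\hA_f$ is a well-defined $\ast$-homomorphism that fixes $\aA$, commutes with $\theta$ (immediate from \eqref{eq:v} and centrality of $c(\chi)$), and is isometric, hence extends to an element of $\Gau_Z(\hA)$ — here the cocycle condition is exactly what makes $\hfi_c$ multiplicative on products of isotypic elements, and $\ast$-compatibility uses $c(-\chi) = \Delta_\chi^{-1}(c(\chi))^{*}$, which follows from the cocycle relation at $(\chi,-\chi)$. That the two constructions are mutually inverse is then immediate on $\hA_f$ and extends by continuity. Finally, homomorphy of $\hfi \mapsto c$ with respect to composition in $\Gau_Z(\hA)$ and multiplication in $Z^1_\Delta$ follows by evaluating a composite on $s(\chi)$: $(\hfi_1 \circ \hfi_2)(s(\chi)) = \hfi_1(c_2(\chi)s(\chi)) = c_2(\chi)\hfi_1(s(\chi)) = c_2(\chi)c_1(\chi)s(\chi)$, using again that $\hfi_1$ fixes $\aA$ and $c_2(\chi) \in \aA$.

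The main obstacle I anticipate is the bookkeeping around partial isometries: $s(\chi)$ is only an isometry $\C \to \hH_\chi$ tensored into $\hA$, so $s(\chi)s(\chi)^{*}$ is a genuine projection $p(\chi) \neq 1$, and one must be careful that expressions like $s(\chi)^{*}\,c(\chi')\,s(\chi)$ and the products in \eqref{eq:multiplication} are manipulated correctly — in particular that $c(\chi)$, living in $\aA$, interacts with $s(\chi)$ only through the Fröhlich/conditional-expectation formalism and that the resulting $c$ genuinely takes values in $\mathcal{U}(Z(\aA))$ (indeed in $\mathcal{U}(Z(\aA)^G)$, though the statement only claims $\mathcal{U}(Z(\aA))$) rather than in a corner. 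A secondary point is to confirm independence of the isomorphism from the choice of factor system; since all factor systems of $(\hA,Z,\theta)$ are conjugate and $\Delta$ is already known to be choice-independent, the subgroup $Z^1_\Delta(Z^*,\mathcal{U}(Z(\aA)))$ is intrinsic, and a conjugating family of unitaries between two factor systems induces compatible isomorphisms — I would either include a brief remark to this effect or simply note that the construction above works for any fixed choice and the abstract group $\Gau_Z(\hA)$ does not see the choice.
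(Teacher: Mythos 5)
Your proposal follows essentially the same route as the paper: restrict $\hfi \in \Gau_Z(\hA)$ to the isotypic components $\hiso(\chi)$, use a Picard-type automorphism characterization to produce $c(\chi)$, verify the crossed-homomorphism identity, and reverse the construction componentwise, extending from $\hA_f$ to $\hA$ by unitarity on $L^2(\hA)$. The paper organizes this into two lemmas (Lemma~\ref{lem:gau1} and Lemma~\ref{lem:gau2}) and then observes the two maps are mutually inverse group isomorphisms.

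One small but genuine inaccuracy: you invoke Lemma~\ref{lem:MSE:autME} to conclude that $c(\chi)$ lands in $\mathcal{U}(Z(\aA)^G)$. That lemma applies to \emph{equivariant} automorphisms of a Morita self-equivalence over $(\aA,\hG,\alpha\circ q)$, but an element of $\Gau_Z(\hA)$ is only required to commute with the $Z$-action $\theta$, which acts on each $\hiso(\chi)$ by scalars. So the restriction $\hfi_\chi$ is merely an $\aA$-bimodule automorphism, not a priori $\hG$-equivariant, and the correct reference is the non-equivariant automorphism result of~\cite[Prop.~5.4]{SchWa15}, which yields exactly $c(\chi) \in \mathcal{U}(Z(\aA))$ as the theorem claims (and as the paper uses). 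Your parenthetical ``indeed in $\mathcal{U}(Z(\aA)^G)$'' is therefore an over-claim not supported by the hypotheses on $\Gau_Z(\hA)$; dropping it aligns the argument with the theorem statement and with the paper's proof.
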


Before turning to the proof, we note that Theorem~\ref{thm:gau} implies that $\Gau_Z(\hA)$ is Abelian - a fact that will play a crucial role below.
The proof is organized into two lemmas:

\begin{lemma}
\label{lem:gau1}
Each $\hfi \in \Gau_Z(\hA)$ gives rise to an element $c_{\hfi} \in Z^1_\Delta(Z^*,\mathcal{U}(Z(\aA)))$.
\end{lemma}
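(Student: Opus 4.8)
The plan is to produce the crossed homomorphism $c_{\hfi}$ directly from the action of $\hfi$ on the isotypic components $\hiso(\chi)$. Since $\hfi \in \Gau_Z(\hA)$ commutes with $\theta$, it preserves each $\hiso(\chi)$, and since $\hfi\rvert_\aA = \id_\aA$, the restriction $\hfi\rvert_{\hiso(\chi)}$ is an automorphism of $\hiso(\chi)$ as a Morita self-equivalence over $(\aA,\hG,\alpha\circ q)$ — here one must check $\hG$-equivariance, which follows because $\hfi$ commutes with $\ha$ (it commutes with $\theta = \ha\rvert_Z$ and fixes $\aA = \hA^Z$ pointwise; more carefully, this uses that $\hfi \in \Gau_Z(\hA)$ together with the structure of $\hA$, so I would verify that gauge transformations of $(\hA,Z,\theta)$ automatically intertwine $\ha$). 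Granting this, Lemma~\ref{lem:MSE:autME} applies: there is a unique $c_{\hfi}(\chi) \in \mathcal{U}(Z(\aA)^G)$ with $\hfi(m) = c_{\hfi}(\chi)\, m$ for all $m \in \hiso(\chi)$. This defines the map $c_{\hfi}: Z^* \to \mathcal{U}(Z(\aA))$.

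Next I would verify the crossed homomorphism identity $c_{\hfi}(\chi+\chi') = c_{\hfi}(\chi)\,\Delta_\chi(c_{\hfi}(\chi'))$. The key input is that $\hfi$ is multiplicative on $\hA$, hence compatible with the multiplication maps $m_{(\chi,\chi')}: \hiso(\chi)\otimes_\aA \hiso(\chi') \to \hiso(\chi+\chi')$ from~\eqref{eq:multiplication}. Concretely, for $s \in \hiso(\chi)$ and $t \in \hiso(\chi')$,
\begin{equation*}
    c_{\hfi}(\chi+\chi')\, st = \hfi(st) = \hfi(s)\hfi(t) = c_{\hfi}(\chi)\, s\, c_{\hfi}(\chi')\, t,
\end{equation*}
and since $c_{\hfi}(\chi')$ is central in $\aA$ while $s \in \hiso(\chi)$ only \emph{almost} commutes with elements of $\aA$, one moves $c_{\hfi}(\chi')$ past $s$ using exactly the Fröhlich/$\Delta$ relation: $s\, a = \Delta_\chi(a)\, s$ for $a \in \mathcal{U}(Z(\aA))$, which is the content of $\Delta_\chi(a) = s(\chi)^* a s(\chi)$ together with the fact that $s(\chi)s(\chi)^*$ acts as the identity on $\hiso(\chi)$. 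This yields $c_{\hfi}(\chi)\, st = c_{\hfi}(\chi)\Delta_\chi(c_{\hfi}(\chi'))\, st$, and since this holds for all $s,t$ and the products $st$ span a dense (indeed, by freeness, cofinal-enough) subset of $\hiso(\chi+\chi')$, we conclude $c_{\hfi}(\chi+\chi') = c_{\hfi}(\chi)\Delta_\chi(c_{\hfi}(\chi'))$. The normalization $c_{\hfi}(0) = 1_\aA$ is immediate from $s(\one) = \one_\aA$ and $\hfi\rvert_\aA = \id$.

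I expect the main obstacle to be the two "softer" points rather than the cocycle computation: first, confirming that $\hfi$ genuinely restricts to a Morita-self-equivalence automorphism of $\hiso(\chi)$ \emph{over the full group $\hG$} (not merely over $Z$), so that Lemma~\ref{lem:MSE} is applicable and delivers centrality of $c_{\hfi}(\chi)$ in $Z(\aA)^G$ rather than just $Z(\aA)$; and second, justifying that the identity on products $st$ propagates to all of $\hiso(\chi+\chi')$ — this needs that $\hiso(\chi)\cdot\hiso(\chi')$ generates $\hiso(\chi+\chi')$ appropriately, which follows from $m_{(\chi,\chi')}$ being an \emph{isomorphism} of Morita bimodules (stated after~\eqref{eq:multiplication}). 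Once these are in place, the verification that $c_{\hfi} \in Z^1_\Delta(Z^*,\mathcal{U}(Z(\aA)))$ is the short algebraic manipulation above, and Lemma~\ref{lem:gau1} follows.
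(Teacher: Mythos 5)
Your cocycle computation and density argument are correct and match the paper's proof step for step. The problem is in the first step, where you try to apply Lemma~\ref{lem:MSE:autME} to the $\hG$-equivariant automorphism group of $\hiso(\chi)$ in order to land in $\mathcal{U}(Z(\aA)^G)$. To invoke that lemma you would need $\hfi$ to intertwine the full $\hG$-action $\ha$, and you suggest that this ``follows because $\hfi$ commutes with $\theta=\ha\rvert_Z$ and fixes $\aA$.'' That inference does not go through: membership in $\Gau_Z(\hA)$ guarantees only $Z$-equivariance, and the failure of a gauge transformation to commute with the full $\ha$ is exactly what the conjugation homomorphism $S(g)(\hfi):=\ha_{\hg}\circ\hfi\circ\ha_{\hg}^{-1}$ in Equation~\eqref{eq:S} measures — that map would be trivial if your claim held, which it is not in general. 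So as written the step is a genuine gap, not a soft point.

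The gap is, however, easy to close, because you are aiming for more than the lemma asks. The target group in the statement is $\mathcal{U}(Z(\aA))$, not $\mathcal{U}(Z(\aA)^G)$. The paper therefore does not use Lemma~\ref{lem:MSE} at all here; it cites the non-equivariant automorphism result of Morita equivalence bimodules \cite[Prop.~5.4]{SchWa15} directly, which produces $c_{\hfi}(\chi)\in\mathcal{U}(Z(\aA))$ from the mere fact that $\hfi_\chi\in\Aut(\hiso(\chi))$ — and for that, the $Z$-equivariance and $\hfi\rvert_\aA=\id_\aA$ you already have are enough. Equivalently, you could apply Lemma~\ref{lem:MSE:autME} with $\hG$ replaced by $Z$: since $Z$ acts trivially on $\aA=\hA^Z$, one has $Z(\aA)^Z=Z(\aA)$, and the $Z$-equivariance you do have is exactly the hypothesis needed. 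With that adjustment, the rest of your argument (multiplicativity of $\hfi$, the Fröhlich relation $x\,a=\Delta_\chi(a)\,x$ for $x\in\hiso(\chi)$, density of $\spann(\hiso(\chi)\hiso(\chi'))$ in $\hiso(\chi+\chi')$) is sound and essentially identical to the paper's.
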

\begin{proof}
Let $\hfi \in \Gau_Z(\hA)$, and for each $\chi \in Z^*$, denote by $\hfi_\chi$ its restriction to $\hiso(\chi)$.
Since $\hfi$ is $Z$-equivariant and satisfied $\hfi \rvert_{\aA} = \id_\aA$, it follows that $\hfi_\chi \in \Aut(\hiso(\chi))$ for each $\chi \in Z^*$.
By~\cite[Prop.~5.4]{SchWa15}, this gives rise to a map $c_{\hfi}: Z^* \to \mathcal{U}(Z(\aA))$, defined via
\begin{equation*}
  c_{\hfi}(\chi)x = \hfi_\chi(x)  
\end{equation*}
for all $x \in \hiso(\chi)$.

Let $\chi,\chi'\in Z^*$, and choose $x \in \hiso(\chi)$ and $y \in \hiso(\chi')$. 
Then:
\begin{equation*}
    c_{\hfi}(\chi+\chi')xy 
    = 
    \hfi_{\chi+\chi'}(xy) 
    = 
    \hfi_\chi(x) \hfi_{\chi'}(y)
    =
    c_{\hfi}(\chi)x c_{\hfi}(\chi')y
    =
    c_{\hfi}(\chi) \Delta_\chi(c_{\hfi}(\chi'))xy.
\end{equation*}
The span of such products $xy$ is dense in $\hiso(\chi+\chi')$, and so this identity extends to all of $\hiso(\chi+\chi')$.
Therefore $c_{\hfi}(\chi+\chi') = c_{\hfi}(\chi) \Delta_\chi(c_{\hfi}(\chi'))$, which completes the proof.
\end{proof}

\begin{lemma}
\label{lem:gau2}
Each $c \in Z^1_\Delta(Z^*,\mathcal{U}(Z(\aA)))$ gives rise to an en element $\hfi_c \in \Gau_Z(\hA)$. 
\end{lemma}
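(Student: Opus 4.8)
The plan is to reverse the construction in Lemma~\ref{lem:gau1}: given a crossed homomorphism $c \in Z^1_\Delta(Z^*,\mathcal{U}(Z(\aA)))$, I want to build an automorphism $\hfi_c$ of $\hA$ that is the identity on $\aA$, is $Z$-equivariant, and restricts on each isotypic component $\hiso(\chi)$ to the scaling map $x \mapsto c(\chi)x$. First I would \emph{define} $\hfi_c$ on the dense $\Star$-subalgebra $\hA_f = \bigoplus_{\chi \in Z^*}\hiso(\chi)$ by declaring $\hfi_c(x) := c(\chi)x$ for $x \in \hiso(\chi)$ and extending linearly. Using the concrete description $\hiso(\chi) = \{y\, s(\chi) : y \in \aA \otimes \End(\hH_\chi,\C)\}$ from Equation~\eqref{eq:iso}, this reads $\hfi_c(y\,s(\chi)) = c(\chi)\,y\,s(\chi)$, which makes the map manifestly well-defined on $\hA_f$.

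Next I would verify that $\hfi_c$ is a $\Star$-homomorphism on $\hA_f$. Multiplicativity is exactly the computation that the crossed-homomorphism identity encodes: for $x \in \hiso(\chi)$, $y \in \hiso(\chi')$ one has $xy \in \hiso(\chi+\chi')$, and
\begin{equation*}
    \hfi_c(xy) = c(\chi+\chi')\,xy = c(\chi)\,\Delta_\chi(c(\chi'))\,xy = c(\chi)\,x\,c(\chi')\,y = \hfi_c(x)\hfi_c(y),
\end{equation*}
where I use that $c(\chi) \in \mathcal{U}(Z(\aA))$ is central and that $\Delta_\chi(a)\,x = x\,a$ for $x \in \hiso(\chi)$, $a \in \aA$ (the defining property of the Fröhlich action, via $\Delta_\chi(a) = s(\chi)^* a\, s(\chi)$). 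The $\Star$-compatibility $\hfi_c(x^*) = \hfi_c(x)^*$ follows from $c(\chi)^{-1} = c(\chi)^*$ together with the relation between $c(-\chi)$ and $c(\chi)$ forced by $c(0) = 1_\aA$ and the cocycle identity, again using that $x^* \in \hiso(-\chi)$ and that $\Delta_{-\chi}$ intertwines appropriately. I would also check $\hfi_c$ is isometric on $\hA_f$ — immediate since $c(\chi)$ is unitary and central, so $\langle \hfi_c(x),\hfi_c(x)\rangle = x^* c(\chi)^* c(\chi) x = x^*x$ — which lets me extend $\hfi_c$ to a $\Star$-endomorphism of all of $\hA$ by continuity. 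Its inverse is constructed the same way from the crossed homomorphism $\chi \mapsto \Delta_\chi(c(\chi))^{-1}$ (equivalently $c(-\chi)$ transported appropriately), so $\hfi_c \in \Aut(\hA)$. Finally, $Z$-equivariance is clear because $\theta_z$ acts on $\hiso(\chi)$ by the scalar $\chi(z)$, which commutes with multiplication by the central element $c(\chi)$; and $\hfi_c\rvert_\aA = \id_\aA$ since $\aA = \hiso(0)$ and $c(0) = 1_\aA$. Hence $\hfi_c \in \Gau_Z(\hA)$.

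The main obstacle I anticipate is purely bookkeeping around the $\Star$-operation and the inverse: one must be careful that the value $c(-\chi)$ (or whatever plays that role) pairs correctly with $\Delta$ so that $\hfi_c$ respects adjoints and is genuinely invertible, rather than merely a contractive endomorphism. Concretely, from $c(0) = c(\chi + (-\chi)) = c(\chi)\,\Delta_\chi(c(-\chi)) = 1_\aA$ one gets $c(-\chi) = \Delta_\chi^{-1}(c(\chi)^{-1}) = \Delta_{-\chi}(c(\chi))^{-1}$, and it is this identity that must be fed into the adjoint computation $\hfi_c(y\,s(\chi))^* = (c(\chi)\,y\,s(\chi))^* = s(\chi)^* y^* c(\chi)^*$ and matched against $\hfi_c\big((y\,s(\chi))^*\big)$. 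Once this is pinned down, everything else is routine, and combined with Lemma~\ref{lem:gau1} — noting that the assignments $\hfi \mapsto c_{\hfi}$ and $c \mapsto \hfi_c$ are mutually inverse and respect composition/addition, since composition of gauge transformations multiplies the scalars $c(\chi)$ while the group law on $Z^1_\Delta$ does the same — this yields the isomorphism $\Gau_Z(\hA) \cong Z^1_\Delta(Z^*,\mathcal{U}(Z(\aA)))$ asserted in Theorem~\ref{thm:gau}.
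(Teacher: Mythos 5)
Your construction is the same as the paper's: define $\hfi_c$ on the dense $\Star$-subalgebra $\hA_f$ by scaling $\hiso(\chi)$ with $c(\chi)$, verify multiplicativity via the crossed-homomorphism identity and $\Star$-compatibility via the relation between $c(-\chi)$ and $c(\chi)$ (the paper records this as $\Delta_\chi(c(-\chi)^*) = c(\chi)$, equivalent to what you derive), note $Z$-equivariance and triviality on $\aA$, and finally handle continuity. The one place your writeup is loose is the last step: isometry of $\hfi_c$ with respect to the $\aB$-valued ($L^2$) inner product does not by itself justify ``extend to all of $\hA$ by continuity,'' since the $L^2$-norm is strictly weaker than the C$^*$-norm; the paper closes this gap by extending $\hfi_c$ to a unitary $U$ on $L^2(\hA)$ and then recovering a genuine $\Star$-automorphism of $\hA$ via $\lambda(\hfi_c(x)) = U\lambda(x)U^*$, using that $\Ad U$ preserves $\lambda(\hA)$. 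Making that last step explicit is the only refinement your argument needs.
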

\begin{proof}
Let $c \in Z^1_\Delta(Z^*,\mathcal{U}(Z(\aA)))$. 
For each $\chi \in Z^*$, define $\hfi_\chi \in \Aut(\hiso(\chi))$ by 
\begin{equation*}
    \hfi_\chi(x) := c(\chi)x,
    \qquad
    x \in \hiso(\chi).
\end{equation*}
Taking the direct sum of these maps yields a linear map $\hfi_c$ on the dense \Star subalgebra~$\hA_f$ (see~Equation~\eqref{eq:A_f}).
Note that $\hfi_c \rvert_{\aA} = \id_\aA$, because $c(0) = 1_\aA$.
Moreover, it is readily seen that $\hfi_c$ is bijective and satisfies $\theta_z \circ \hfi_c = \hfi_c \circ \theta_z$ for all $z \in Z$.

To prove that $\hfi_c$ is multiplicative, let $\chi, \chi' \in Z^*$, let $x \in \hiso(\chi)$, and let $y \in \hiso(\chi')$.
Then:
\begin{gather*}
    \hfi_c(xy)
    =
    \hfi_{\chi+\chi'}(xy)
    =
    c(\chi+\chi')xy
    =
    c(\chi) \Delta_\chi(c(\chi')) xy
    \\
    =
    c(\chi)x c(\chi')y
    =
    \hfi_\chi(x) \hfi_{\chi'}(y)
    =
    \hfi_c(x) \hfi_c(y),
\end{gather*}
which yields the desired conclusion.

It remains to be shown that $\hfi_c$ is involutive. 
Let $\chi \in Z^*$, and let $x \in \hiso(\chi)$.
The identity $\Delta_\chi(c(-\chi)^*) = c(\chi)$ implies that
\begin{gather*}
    \hfi_c(x^*)
    =
    \hfi_{-\chi}(x^*)
    =
    c(-\chi)x^*
    =
    (xc(-\chi)^*)^*
    \\
    =
    (\Delta_\chi(c(-\chi)^*)x)^*
    =
    (c(\chi)x)^*
    =
    \hfi_\chi(x)^*
    =
    \hfi_c(x)^*,
\end{gather*}
thereby confirming the claim.

In summary, $\hfi_c$ is a \Star algebra automorphism.
The remaining point concerns its continuity.
It is straightforward to verify that $\hfi_c$ is unitary with respect to the inner product defined in Equation~\eqref{eq:innprod}, and thus extends to a unitary operator $U$ on $L^2(\hA)$.
Consequently, there exists a \Star automorphism on $\hA$ - again denoted by $\hfi_c$, by a slight abuse of notation, such that $\lambda(\hfi_c(x)) = U \lambda(x) U^*$ for all $x \in \hA$.
This automorphism acts as the identity on $\aA$ and commutes with each $\theta_z$, for $z \in Z$, as is easily checked.
This completes the proof.
\end{proof}

A direct computation now shows that the map 
\begin{equation*}
    \Gau_Z(\hA) \to Z^1_\Delta(Z^*,\mathcal{U}(Z(\aA))),
    \qquad
    \hfi \mapsto c_{\hfi}, 
\end{equation*}
is a group isomorphism with inverse 
\begin{equation*}
    Z^1_\Delta(Z^*,\mathcal{U}(Z(\aA))) \to \Gau_Z(\hA),
    \qquad
    c \mapsto \hfi_c,
\end{equation*}
thereby establishing Theorem~\ref{thm:gau}.

\subsection{\texorpdfstring{Classification of $\mathbf{\hG}$-structures}
{Classification of G-hat-structure}}
\label{sec:classification}

Having analyzed $\mathbf{\hG}$-structures in the previous section, we now develop a classification theory for them using group cohomology.

\begin{defn}
Two $\mathbf{\hG}$-structures $(\hA,\hG,\ha)$ and $(\hA',\hG,\ha')$ are said to be~\emph{equivalent} if there exists a $\hG$-equivariant \Star isomorphism $\hfi:\hA \to \hA'$ such that $\hfi \rvert_{\aA} = \id_\aA$.
Such a map is referred to as an \emph{equivalence}.
The equivalence class of a $\hG$-structure $(\hA,\hG,\ha)$ is written $[(\hA,\hG,\ha)]$, and the set of all such equivalence classes is denoted $\Ext(\hG,(\aA,G,\alpha))$.
\end{defn}

Throughout the following, let $\Pic_{\hG}(\aA)$ denote equivariant Picard group of $(\aA,\hG,\alpha \circ q)$.
It is straightforward to verify that the map
\begin{equation*}
    \Ext(\hG,(\aA,G,\alpha)) \ni [(\hA,\hG,\ha)] \mapsto \mathsf{p}_{\hA} \in \Hom(Z^*,\Pic_{\hG}(\aA)),
\end{equation*}
where $\mathsf{p}_{\hA}$ denotes the Picard homomorphism associated with $(\hA,\hG,\ha)$ (see Equation~\eqref{eq:pic}), is well-defined, yielding a partition of $\Ext(\hG,(\aA,G,\alpha))$ into the following subsets:

\begin{defn}
For $\mathsf{p} \in \Hom(Z^*,\Pic_{\hG}(\aA))$, we define
\begin{equation*}
    \Ext(\hG,(\aA,G,\alpha),\mathsf{p}) := \{[(\hA,\hG,\ha)] \in \Ext(\hG,(\aA,G,\alpha)) : \mathsf{p}_{\hA} = \mathsf{p}\}.
\end{equation*}  
\end{defn}

For certain group homomorphisms $\mathsf{p}: Z^* \to \Pic_{\hG}(\aA)$, one can construct explicit examples showing that $\Ext(\hG,(\aA,G,\alpha),\mathsf{p})$ is nonempty.
However, in general, this set may be empty.
We defer this question to Section~\ref{sec:constructing_G_structures} and first focus on characterizing $\Ext(\hG,(\aA,G,\alpha),\mathsf{p})$ and its elements.

At the outset, recall that each $\mathsf{p} \in \Hom(Z^*,\Pic_{\hG}(\aA))$ induces a $Z^*$-module structure on $\mathcal{U}(Z(\aA)^G)$ via a group homomorphism $\Delta: Z^* \to \Aut(\mathcal{U}(Z(\aA)^G))$ (\cf~Lemma~\ref{lem:MSE:isoME}).
This enables the use of group cohomology, specifically
\begin{equation*}
    Z^2_\Delta(Z^*,\mathcal{U}(Z(\aA)^G)),
    \qquad
    B^2_\Delta(Z^*,\mathcal{U}(Z(\aA)^G)),
    \qquad
    \text{and}
    \qquad
    H^2_\Delta(Z^*,\mathcal{U}(Z(\aA)^G)):
\end{equation*}

\begin{lemma}\label{lem:cocycles}
Suppose $\mathsf{p}:Z^* \to \Pic_{\hG}(\aA)$ is a group homomorphism such that
\begin{equation*}
    \Ext(\hG,(\aA,G,\alpha),\mathsf{p}) \neq \emptyset.
\end{equation*}
For each $\chi \in Z^*$, fix a representative $M(\chi)$ of $\mathsf{p}(\chi)$, with $M(\one) := \aA$. 
Then the following~assertions hold:
\begin{enumerate}[label=\arabic*.,ref=\ref{lem:cocycles}.{\arabic*}]
\item
\label{lem:cocycles_rep}
    Each class in $\Ext(\hG,(\aA,G,\alpha),\mathsf{p})$ can be represented by a $\hG$-structure whose $Z$-isotypic components are $M(\chi)$, $\chi \in Z^*$.
\item
\label{lem:cocycles_Z}
    Any two $\hG$-structures whose $Z$-isotypic components are $M(\chi)$, $\chi \in Z^*$, differ by a cocycle $\omega \in Z^2_\Delta(Z^*,\mathcal{U}(Z(\aA)^G))$ in the sense that if $(\hA,\hG,\ha)$ and $(\hA',\hG,\ha')$ are such $\hG$-structures, then their induced multiplication maps satisfy
    \begin{equation}
    \label{eq:multiplication_twist}
        m'_{(\chi,\chi')} = \omega(\chi,\chi') \, m_{(\chi,\chi')} := \phi_{M(\chi + \chi')}(\omega(\chi,\chi')) \, m_{(\chi,\chi')},
    \end{equation}
    for each $(\chi,\chi') \in Z^* \times Z^*$ (see Equation~\eqref{eq:multiplication} and Lemma~\ref{lem:MSE:autME}).
\item
\label{lem:cocycles_B}
    Two $\hG$-structures whose $Z$-isotypic components are $M(\chi)$, $\chi \in Z^*$, represent the same class in $\Ext(\hG,(\aA,G,\alpha),\mathsf{p})$ if and only if the corresponding 2-cocycle $\omega$ from item~2 above is a coboundary, \ie, $\omega \in B^2_\Delta(Z^*,\mathcal{U}(Z(\aA)^G))$.
\end{enumerate}
\end{lemma}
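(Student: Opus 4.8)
The plan is to handle the three items in order, since each builds on the structural analysis of Section~\ref{sec:analyzing_G_structures}, and the only genuinely new input is bookkeeping with the induced multiplication maps.

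\textbf{Item 1.} Start from a representative $(\hA,\hG,\ha)$ of an arbitrary class in $\Ext(\hG,(\aA,G,\alpha),\mathsf{p})$. Its $Z$-isotypic components $\hiso(\chi)$ satisfy $[\hiso(\chi)] = \mathsf{p}(\chi) = [M(\chi)]$ in $\Pic_{\hG}(\aA)$, so for each $\chi$ there is a $\hG$-equivariant Morita equivalence bimodule isomorphism $\psi_\chi : M(\chi) \to \hiso(\chi)$; normalize $\psi_{\one} = \id_\aA$ (permissible since $M(\one) = \aA = \hiso(\one)$ and any bimodule automorphism of $\aA$ fixing the inner products is the identity). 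Transport the multiplication: define a new product on $\bigoplus_\chi M(\chi)$ by $s \cdot t := \psi_{\chi+\chi'}^{-1}\!\bigl(\psi_\chi(s)\,\psi_{\chi'}(t)\bigr)$ for $s \in M(\chi)$, $t \in M(\chi')$, and carry over the involution and the $\hG$-action the same way. One checks associativity, the $^*$-identity, and $\hG$-equivariance because the $\psi_\chi$ intertwine everything; completing in the $L^2$-norm (as in the proof of Lemma~\ref{lem:gau2}) gives a $C^*$-algebra $\hA'$ with a $\hG$-action $\ha'$, and $\bigoplus_\chi \psi_\chi$ extends to a $\hG$-equivariant $^*$-isomorphism $\hA' \to \hA$ restricting to $\id_\aA$. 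Hence $[(\hA',\hG,\ha')] = [(\hA,\hG,\ha)]$ and $(\hA',\hG,\ha')$ has the prescribed isotypic components.

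\textbf{Item 2.} Given two $\hG$-structures $(\hA,\hG,\ha)$, $(\hA',\hG,\ha')$ with the same isotypic components $M(\chi)$, both induced multiplication maps $m_{(\chi,\chi')}$ and $m'_{(\chi,\chi')}$ from Equation~\eqref{eq:multiplication} are $\hG$-equivariant Morita bimodule isomorphisms $M(\chi)\otimes_\aA M(\chi') \to M(\chi+\chi')$, so $m'_{(\chi,\chi')}\circ m_{(\chi,\chi')}^{-1}$ is a $\hG$-equivariant automorphism of $M(\chi+\chi')$. By Lemma~\ref{lem:MSE:autME} it equals $\phi_{M(\chi+\chi')}(\omega(\chi,\chi'))$ for a unique $\omega(\chi,\chi') \in \mathcal{U}(Z(\aA)^G)$, which is Equation~\eqref{eq:multiplication_twist}. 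Associativity of the product in both $\hA$ and $\hA'$ forces the cocycle identity: writing out $m_{(\chi+\chi',\chi'')}\circ(m_{(\chi,\chi')}\otimes\id)$ in two ways for the triple product in $\hA$ gives the Mac Lane associator, and comparing with the analogous identity in $\hA'$ while pushing the $\phi_{M(\cdot)}(\omega(\cdot,\cdot))$ factors through — using Lemma~\ref{lem:MSE:isoME} to move a scalar in $\mathcal{U}(Z(\aA)^G)$ across a bimodule (this produces the $\Delta_\chi$ twist) — yields $\omega(\chi',\chi'')\,\omega(\chi,\chi'+\chi'') = \Delta_\chi(\omega(\chi',\chi''))^{-1}$\ldots\ i.e., after normalizing $\omega(\one,-) = \omega(-,\one) = 1$, exactly membership in $Z^2_\Delta(Z^*,\mathcal{U}(Z(\aA)^G))$. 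I expect this associator computation to be the main obstacle: one must be careful that the factor $\omega(\chi,\chi')$, though central and $G$-fixed, does not commute with elements of $M(\chi'')$ on the correct side, and the emergence of the $\Delta$-twisted cocycle condition (rather than an untwisted one) hinges on tracking that asymmetry via Lemma~\ref{lem:MSE:isoME}.

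\textbf{Item 3.} If $(\hA,\hG,\ha)$ and $(\hA',\hG,\ha')$ (with the same components $M(\chi)$) are equivalent, an equivalence $\hfi:\hA\to\hA'$ restricts on each $\hiso(\chi) = M(\chi)$ to a $\hG$-equivariant bimodule automorphism, hence by Lemma~\ref{lem:MSE:autME} to $\phi_{M(\chi)}(b(\chi))$ for a unique $b(\chi)\in\mathcal{U}(Z(\aA)^G)$ with $b(\one) = 1_\aA$; multiplicativity of $\hfi$, i.e. $\hfi\circ m_{(\chi,\chi')} = m'_{(\chi,\chi')}\circ(\hfi\otimes\hfi)$, unwinds — again moving scalars across bimodules via Lemma~\ref{lem:MSE:isoME} — to $b(\chi+\chi')\,m_{(\chi,\chi')} = \omega(\chi,\chi')\,m_{(\chi,\chi')}\,\bigl(b(\chi)\otimes b(\chi')\bigr)$, that is $\omega(\chi,\chi') = b(\chi+\chi')\,b(\chi)^{-1}\,\Delta_\chi(b(\chi'))^{-1}$, so $\omega \in B^2_\Delta(Z^*,\mathcal{U}(Z(\aA)^G))$. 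Conversely, if $\omega = \partial b$ for such a $b$, reverse the computation: the maps $\phi_{M(\chi)}(b(\chi))$ assemble (on $\hA_f$) into a $\hG$-equivariant $^*$-isomorphism fixing $\aA$ — the $^*$- and continuity checks are exactly as in Lemma~\ref{lem:gau2}, using that each $b(\chi)$ is unitary, central and $G$-fixed — establishing the desired equivalence. This gives the claimed characterization, completing the proof.
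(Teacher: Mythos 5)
Your Item~3 and the first step of Item~2 match the paper's proof closely, and your Item~1 is correct in spirit but takes a slightly different route: rather than citing \cite[Thm.~5.8~(a)]{SchWa15} to produce a free $Z$-system $(\hA_M,Z,\theta_M)$ with prescribed isotypic components and then transporting the $\hG$-action along the resulting $Z$-equivariant isomorphism (as the paper does), you inline that construction by transporting the product and completing by hand. That is acceptable, but the appeal to ``completing in the $L^2$-norm (as in the proof of Lemma~\ref{lem:gau2})'' is not quite apt: Lemma~\ref{lem:gau2} works inside a pre-existing $L^2(\hA)$ to extend an automorphism, whereas here you would need to build the ambient C\Star norm on $\bigoplus_\chi M(\chi)$ from scratch. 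What saves you is that the algebraic map $\bigoplus_\chi\psi_\chi$ is already a $^*$-isomorphism onto $\hA_f\subseteq\hA$, so you can simply pull back the C\Star norm and complete; that deserves a sentence, and you are effectively reproving the cited theorem.

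The genuine gap is in Item~2. You correctly identify that one extracts $\omega(\chi,\chi')\in\mathcal{U}(Z(\aA)^G)$ from Lemma~\ref{lem:MSE:autME}, that the cocycle identity must come from comparing the two bracketings of the triple product, and that moving scalars across tensor legs via Lemma~\ref{lem:MSE:isoME} produces the $\Delta$-twist --- but you never complete the computation, signalling this yourself with an ellipsis, and the partial identity you do display is wrong: the left-hand side of your
\begin{equation*}
\omega(\chi',\chi'')\,\omega(\chi,\chi'+\chi'')=\Delta_\chi(\omega(\chi',\chi''))^{-1}\cdots
\end{equation*}
has $\omega(\chi',\chi'')$ where $\omega(\chi,\chi')$ should appear, and the right-hand side is truncated. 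Carrying it out: applying $m'$ twice to $\id_\chi\otimes m'_{(\chi',\chi'')}$ and pushing the scalar $\omega(\chi',\chi'')$ through the $\chi$-leg with Lemma~\ref{lem:MSE:isoME} gives the prefactor $\Delta_{M(\chi)}(\omega(\chi',\chi''))\,\omega(\chi,\chi'+\chi'')$, while the other bracketing gives $\omega(\chi,\chi')\,\omega(\chi+\chi',\chi'')$; equating the two is exactly the twisted cocycle condition
\begin{equation*}
\Delta_{M(\chi)}(\omega(\chi',\chi''))\,\omega(\chi,\chi'+\chi'')=\omega(\chi,\chi')\,\omega(\chi+\chi',\chi'').
\end{equation*}
Also, you write that $\omega(\chi,\chi')$ ``does not commute with elements of $M(\chi'')$ on the correct side''; since $\omega(\chi,\chi')$ lies in $\mathcal{U}(Z(\aA)^G)$ this is not the real issue --- the asymmetry comes entirely from passing the scalar from the right of $M(\chi)$ to the left, which is where $\Delta_{M(\chi)}$ enters. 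Until this computation is actually written down (as the paper does), Item~2 is not established.
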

\begin{proof}
\begin{enumerate}[label={\arabic*.}]
\item
    Let $(\hA,\hG,\ha)$ be a $\hG$-structure representing a class in $\Ext(\hG,(\aA,G,\alpha),\mathsf{p})$.
    First, for each $\chi \in Z^*$, fix a $\hG$-equivariant Morita equivalence $\aA$-bimodule isomorphism 
    \begin{equation*}
        \hfi_\chi: \hiso(\chi) \to M(\chi).
    \end{equation*}
    Next, restrict $\ha$ to $Z$ to obtain a free C\Star dynamical~system $(\hA,Z,\theta)$ (see Section~\ref{sec:analyzing_G_structures}).  
    By~\cite[Thm.~5.8~(a)]{SchWa15}, there then exists a free C\Star dynamical system $(\hA_M,Z,\theta_M)$ with $\hiso_M(\chi) = M(\chi)$ for each $\chi \in Z^*$, together with a $Z$-equivariant $*$-isomorphism  
    \begin{equation*}
        \hfi: \hA \to \hA_M,
        \qquad 
        \text{such that} 
        \qquad 
        \hfi|_{\hiso(\chi)} = \hfi_\chi.
    \end{equation*}
    Since each $\hfi_\chi$, for $\chi \in Z^*$, is $\hG$-equivariant, $\hfi: \hA \to \hA_M$ intertwines the $\hG$-action $\ha$ on $\hA_f$ (see Equation~\eqref{eq:A_f}) with the canonical $\hG$-action on $\bigoplus_{\chi \in Z^*} M(\chi) \subseteq \hA_M$.
    This motivates the definition
    \begin{equation*}
        \ha_M(\hg)(x) := \hfi(\ha_{\hg}(\hfi^{-1}(x)),
    \end{equation*}
    which yields a strongly continuous action $\ha_M: \hG \to \Aut(\hA_M)$ extending $\theta_M$, with respect to which $\hfi$ is $\hG$-equivariant.
    Thus, the C\Star dynamical system $(\hA_M,\hG,\ha_M)$ defines a $\hG$-structure equivalent to $(\hA,\hG,\ha)$.
\item 
    Let $(\hA,\hG,\ha)$ and $(\hA',\hG,\ha')$ be two $\hG$-structures whose $Z$-isotypic components are $M(\chi)$, $\chi \in Z^*$.
    By Lemma~\ref{lem:MSE:autME}, for each $(\chi,\chi') \in Z^* \times Z^*$, there exists a unique element $\omega(\chi,\chi') \in \mathcal{U}(Z(\aA)^G)$ such that
    \begin{equation*}
        m'_{(\chi,\chi')} = \omega(\chi,\chi') \, m_{(\chi,\chi')}.
    \end{equation*}
    The resulting map $\omega: Z^* \times Z^* \to \mathcal{U}(Z(\aA)^G)$ takes the value~$1_\aA$ whenever one of its arguments is $\one$. 
    Moreover, Lemma~\ref{lem:MSE:isoME} implies that, for all $\chi,\chi',\chi'' \in Z^*$,
    \begin{align*}
        &m'_{(\chi,\chi'+\chi'')} \circ \left(\id_\chi \otimes \; m'_{(\chi',\chi'')}\right)
        \\
        &= 
        m'_{(\chi,\chi'+\chi'')} \circ \left(\id_\chi \otimes \; \omega(\chi',\chi'') m_{(\chi',\chi'')}\right) 
        \\
        &= 
        m'_{(\chi,\chi'+\chi'')} \circ \left(\id_\chi \omega(\chi',\chi'') \otimes m_{(\chi',\chi'')}\right) 
        \\
        &= 
        m'_{(\chi,\chi'+\chi'')} \circ \left(\Delta_{M(\chi)}(\omega(\chi',\chi'')) \id_\chi \otimes \; m_{(\chi',\chi'')}\right) 
        \\
        &= 
        \Delta_{M(\chi)}(\omega(\chi',\chi'')) m'_{(\chi,\chi'+\chi'')} \circ \left(\id_\chi \otimes \; m_{(\chi',\chi'')}\right) 
        \\
        &= 
        \Delta_{M(\chi)}(\omega(\chi',\chi'')) \omega(\chi,\chi'+\chi'') m_{(\chi,\chi'+\chi'')} \circ \left(\id_\chi \otimes \; m_{(\chi',\chi'')}\right).
    \end{align*}
    On the other hand, for all $\chi,\chi',\chi'' \in Z^*$,
    \begin{equation*}
        \hspace{3em}
        m'_{(\chi+\chi',\chi'')} \circ \left(m'_{(\chi,\chi')} \otimes \,\id_{\chi''}\right) 
        = 
        \omega(\chi,\chi') \omega(\chi+\chi',\chi'') m_{(\chi+\chi',\chi'')} \circ \left(m_{(\chi,\chi')} \otimes \, \id_{\chi''}\right).
    \end{equation*}
    Hence, $\omega \in Z^2_\Delta(Z^*,\mathcal{U}(Z(\aA)^G))$ as claimed.
\item
    Let $(\hA,\hG,\ha)$ and $(\hA',\hG,\ha')$ be two equivalent $\hG$-structures whose $Z$-isotypic components are $M(\chi)$, $\chi \in Z^*$, and let $\omega$ be the corresponding 2-cocycle from item~2.
    
    First, suppose $\omega \in B^2_\Delta(Z^*,\mathcal{U}(Z(\aA)^G))$, \ie, there exists $\varpi \in C^1(Z^*,\mathcal{U}(Z(\aA)^G))$ such that $\omega = d_\Delta\varpi$.
    Using this cochain, one obtains a family 
    \begin{equation*}
        \phi_{M(\chi)}(\varpi(\chi)): M(\chi) \to M(\chi),
        \qquad
        \chi \in Z^*,
    \end{equation*}
    of Morita self-equivalences over $(\aA,\hG,\alpha \circ q)$ implementing an equivalence between $(\hA,\hG,\ha)$ and $(\hA',\hG,\ha')$, as follows from an argument analogous to the final step in the proof of Lemma~\ref{lem:gau2}.
    
    Conversely, suppose that $(\hA,\hG,\ha)$ and $(\hA',\hG,\ha')$ are two equivalent $\hG$-structures whose $Z$-isotypic components are $M(\chi)$, $\chi \in Z^*$.
    Lemma~\ref{lem:MSE:autME} asserts the existence of $\varpi \in C^1(Z^*,\mathcal{U}(Z(\aA)^G))$ implementing this equivalence.
    Moreover, Equation~\eqref{eq:multiplication_twist} implies that $\omega=d_\Delta\varpi$, \ie, $\omega \in B^2_\Delta(Z^*,\mathcal{U}(Z(\aA)^G))$. 
    \qedhere
\end{enumerate}
\end{proof}

\begin{corollary}
\label{cor:class1}
Suppose $\mathsf{p}:Z^* \to \Pic_{\hG}(\aA)$ is a group homomorphism such that
\begin{equation*}
    \Ext(\hG,(\aA,G,\alpha),\mathsf{p}) \neq \emptyset.
\end{equation*}
Then the map
\begin{gather*}
	H^2_\Delta(Z^*,\mathcal{U}(Z(\aA)^G)) \times \Ext(\hG,(\aA,G,\alpha),\mathsf{p}) \to \Ext(\hG,(\aA,G,\alpha),\mathsf{p}),
    \\
    \left([\omega],[(\hA,\hG,\ha)]\right) \mapsto 
    [\omega \acts (\hA,\hG,\ha)]
\end{gather*}
is a well-defined simply transitive action.
Here, $\omega \acts (\hA,\hG,\ha)$ denotes the $\hG$-structure obtained by twisting the multiplication of $(\hA,\hG,\ha)$ according to Equation~\eqref{eq:multiplication_twist}.
\end{corollary}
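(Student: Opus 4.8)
The plan is to upgrade Lemma~\ref{lem:cocycles} from a pairwise comparison into a genuine torsor structure. Fix once and for all representatives $M(\chi)$ of $\mathsf{p}(\chi)$ with $M(\one)=\aA$ as in Lemma~\ref{lem:cocycles}. By \ref{lem:cocycles_rep} every class in $\Ext(\hG,(\aA,G,\alpha),\mathsf{p})$ is represented by a $\hG$-structure whose $Z$-isotypic components are exactly the $M(\chi)$, and since the twist of Equation~\eqref{eq:multiplication_twist} leaves the $Z$-grading untouched, one may work throughout with such \emph{normalized} representatives.

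The one substantive step is to show that for an arbitrary $\omega\in Z^2_\Delta(Z^*,\mathcal{U}(Z(\aA)^G))$ the datum $\omega\acts(\hA,\hG,\ha)$ indeed defines an element of $\Ext(\hG,(\aA,G,\alpha),\mathsf{p})$. The deformed multiplication $m'_{(\chi,\chi')}=\phi_{M(\chi+\chi')}(\omega(\chi,\chi'))\,m_{(\chi,\chi')}$ on $\bigoplus_{\chi}M(\chi)$ is associative precisely when $\omega$ is a $2$-cocycle --- this is the computation in the proof of \ref{lem:cocycles_Z} read in reverse --- so it furnishes a factor system for $Z$, namely that of $(\hA,Z,\theta)$ with its multiplication deformed by $\omega$, and \cite[Thm.~5.8~(a)]{SchWa15} produces a free C\Star dynamical system $(\hA_\omega,Z,\theta)$ with $\hiso_\omega(\chi)=M(\chi)$ and multiplication maps $m'$; a short computation shows that its involution restricts on each $M(\chi)$ to that of $\hA$ corrected by a central unitary built from $\omega$, so that the inner product of Equation~\eqref{eq:innprod} is unchanged. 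The $\hG$-action lifts with no essential change: since $\ha$ restricts on $\aA=\hiso(\one)$ to $\alpha\circ q$, which fixes $\mathcal{U}(Z(\aA)^G)$ pointwise, each $\ha_{\hg}$ commutes past the twisting unitaries and hence stays multiplicative for $m'$, while continuity and $Z$-equivariance are unaffected. Finally, the equivariant isometries furnished by \cite[Lem.~3.3]{SchWa17} for $(\hA,\hG,\ha)$ remain isometries in $\hA_\omega$ --- their equivariance relation does not involve the multiplication of $\hA$, and the isometry identity is preserved because the inner product is --- so $(\hA_\omega,\hG,\ha)$ is again free. I expect the delicate point here to be keeping the choice of involution on $\hA_\omega$ simultaneously compatible with the deformed multiplication and with the isometry condition; alternatively one may simply invoke the construction of Section~\ref{sec:constructing_G_structures} (see Theorem~\ref{thm:freeness}) to obtain $\omega\acts(\hA,\hG,\ha)$ directly.

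With this in hand, the remaining assertions are cohomological bookkeeping over Lemma~\ref{lem:cocycles}. The trivial class acts as the identity, and deforming twice multiplies the deformation factors, so $(\omega_1\omega_2)\acts(\hA,\hG,\ha)=\omega_1\acts\bigl(\omega_2\acts(\hA,\hG,\ha)\bigr)$. For well-definedness, let $(\hA,\hG,\ha)\sim(\hA',\hG,\ha')$ be normalized with $[\omega]=[\omega']$: by the converse part of \ref{lem:cocycles_B} the multiplications of $\hA$ and $\hA'$ differ by a coboundary, and $\omega'\omega^{-1}\in B^2_\Delta(Z^*,\mathcal{U}(Z(\aA)^G))$; hence the multiplications of $\omega\acts\hA$ and $\omega'\acts\hA'$ differ by a coboundary, and the forward part of \ref{lem:cocycles_B} shows the two twisted $\hG$-structures are equivalent.

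Simple transitivity is then immediate. Given two normalized representatives, \ref{lem:cocycles_Z} attaches to them a cocycle $\omega\in Z^2_\Delta(Z^*,\mathcal{U}(Z(\aA)^G))$ with $m'=\omega\,m$, so the second structure \emph{is} $\omega\acts$ the first; this gives transitivity. And if $\omega\acts(\hA,\hG,\ha)\sim(\hA,\hG,\ha)$, then $\omega$ is precisely the cocycle that \ref{lem:cocycles_Z} attaches to this now-equivalent pair, so the ``only if'' part of \ref{lem:cocycles_B} forces $\omega\in B^2_\Delta(Z^*,\mathcal{U}(Z(\aA)^G))$, i.e.\ $[\omega]=0$; this gives freeness of the action. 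As noted, the genuine content is confined to showing that $\omega\acts(\hA,\hG,\ha)$ is a $\hG$-structure; the torsor statement is then a formal consequence of Lemma~\ref{lem:cocycles}.
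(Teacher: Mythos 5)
Your proposal is correct and follows the route the paper leaves implicit: the torsor bookkeeping (transitivity, freeness of the action, well-definedness modulo equivalence) is an immediate reread of Lemma~\ref{lem:cocycles}, and you rightly isolate the genuine content as the claim that $\omega\acts(\hA,\hG,\ha)$ is again a $\hG$-structure, which the paper does not spell out. Your key observations are sound: associativity of $m'$ is exactly the cocycle identity of \ref{lem:cocycles_Z} read backwards; the appeal to \cite[Thm.~5.8~(a)]{SchWa15} produces the underlying free $Z$-system $(\hA_\omega,Z,\theta)$ with the prescribed isotypic components; and the $\hG$-action carries over because $\omega$ takes values in $\mathcal{U}(Z(\aA)^G)$, which $\alpha\circ q$ fixes pointwise, so each $\ha_{\hg}$ commutes with left multiplication by the twisting unitaries.

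Two places where your argument could be tightened. First, your freeness argument via the equivariant isometries of \cite[Lem.~3.3]{SchWa17} needs a small correction: the isometry relation $s(\hat\sigma)^\ast s(\hat\sigma)=\mathbf{1}$ does involve the multiplication and involution, which change under the twist, so the same $s(\hat\sigma)$ is generally not an isometry in $\hA_\omega$; it becomes one only after adjusting by a ($G$- and $Z$-invariant) central unitary built from $\omega$. A cleaner route is to observe that, on the $(\chi,\chi')$-isotypic component, the Ellwood map of $\hA_\omega$ equals left multiplication by the unitary $\omega(\chi,\chi')$ composed with the Ellwood map of $\hA$, so dense range is preserved and freeness of $(\hA_\omega,\hG,\ha)$ is inherited directly from $(\hA,\hG,\ha)$. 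Second, the remark about the involution being "corrected by a central unitary built from $\omega$" is the right instinct, but the precise correction must be chosen compatibly with the Fröhlich twist $\Delta$ in order to satisfy $(s^{\star_\omega})^{\star_\omega}=s$ and anti-multiplicativity; this is precisely what the construction in \cite[Thm.~5.8~(a)]{SchWa15} packages, so it is simplest to defer to that result rather than reconstruct the involution by hand.
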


\begin{remark}
    In the case where $G$ is trivial, we recover the classification results from~\cite[Sec.~5]{SchWa15} concerning free C\Star dynamical systems with a given compact Abelian structure group $Z$ and fixed point algebra~$\aA$.
\end{remark}

\subsection{\texorpdfstring{Constructing $\mathbf{\hG}$-structures}
{Constructing G-hat-structures}}
\label{sec:constructing_G_structures}

In this section, we develop our approach for constructing a $\hG$-structure from the given data.
The construction proceeds in four main steps: 
\begin{enumerate}
\item[1.] 
    Construct a candidate algebra $\hA$ lifting $\aA$. 
\item[2.] 
    Lift $\alpha$ to a $\hG$-action on $\hA$ such that $\hA^Z = \aA$ as $G$-algebras. 
\item[3.] 
    Impose continuity of the lifted action. 
\item[4.] 
    Establish that the resulting C\Star dynamical system is free. 
\end{enumerate}

\subsubsection*{Lifting the algebra}

To carry out the first step, we consider a group homomorphism 
\begin{equation*}
    \mathsf{p}: Z^* \to \Pic_{\hG}(\aA) \subseteq \Pic(\aA)
\end{equation*}
and determine whether it can be realized as the \emph{Picard homomorphism} associated with a free C\Star dynamical system $(\hA,Z,\theta)$ over $\aA$ (see~\cite[Sec.~5.1]{SchWa15}).
If so, we take the underlying C\Star algebra~$\hA$ as a lifting of $\aA$.

By~\cite[Thm.~5.14]{SchWa15}, this is possible if and only if the characteristic class of~$\mathsf{p}$, 
\begin{equation*}
    \kappa(\mathsf{p}) \in H^3_\Delta(Z^*,\mathcal{U}(Z(\aA))),
\end{equation*}
vanishes.
Here, $\Delta: Z^* \to \Aut(\mathcal{U}(Z(\aA)))$ denotes the action~defining the $Z^*$-module~structure on $\mathcal{U}(Z(\aA))$ induced by $\mathsf{p}$ (see~\cite[Rem.~5.7]{SchWa15} and the discussion following Equation~\eqref{eq:froh}).

From now on, we assume that the ``if‘‘ part of this characterization holds, allowing us to work with a fixed lifting $(\hA,Z,\theta)$. 
The following associated data is then fixed as well:
\begin{itemize}
\item 
    For each $\chi \in Z^*$, we denote by $\hiso(\chi)$ the corresponding isotypic component.
\item
    We fix a \hyperref[sec:facsys]{factor system} $(\hH,\gamma,\omega)$ of $(\hA,Z,\theta)$, by choosing, for each $\chi \in Z^*$, a finite-dimensional Hilbert space $\hH_\chi$ and an isometry $s(\chi)\in \hA \otimes \End(\C,\hH_\chi)$ such~that $\theta_z(s(\chi)) = \chi(z) \cdot s(\chi)$ for all $z \in Z$.
    For $\one \in Z^*$, we set $\hH_\one := \C$ and $s(\one) := \one_\aA$, in accordance with convention.
\end{itemize}

\subsubsection*{Lifting the action}

Within the framework outlined in Step~1, the formulation of the second step requires a slight refinement: 
\begin{enumerate} 
\item[2'.] 
    Lift $\alpha$ to a $\hG$-action on $\hA$ whose restriction to $Z$ coincides with the given action~$\theta$. 
\end{enumerate}
To implement this, we require $\alpha(G) \subseteq \Aut(\aA)_{\left[\hA\right]}$; a necessary condition that can be tested against the factor system $(\hH,\gamma,\omega)$ (see Equation~\eqref{eq:aut(A)}).
Under this assumption, each $\hg \in \hG$ gives rise to an element $\ha_{\hg} \in \Aut_Z(\hA)$ lifting $\alpha_{q(\hg)}$, \ie, $\ha_{\hg} \rvert_{\aA} = \alpha_{q(g)}$.

An explicit construction is as follows:
Let $g \in G$ and fix $\hg \in q^{-1}(g)$.
Since $\alpha_g \in \Aut(\aA)_{\left[\hA\right]}$, there exists a family of partial isometries $v_{\hg}(\chi) \in \aA \otimes \End(\hH_\chi)$, $\chi \in Z^*$, normalized by $v_{\hg}(1) := \one_\aA$, such that 
\begin{equation*}
    (\hH,\gamma^{\alpha_g},\omega^{\alpha_g}) = v_{\hg}(\hH,\gamma,\omega)v_{\hg}^*.
\end{equation*}
Then~\cite[Thm.~4.1]{SchWa21} provides an element $\ha_{\hg} \in \Aut_Z(\hA)$ lifting $\alpha_g$. 
For each $\chi \in Z^*$, it is defined on $\hiso(\chi)$ by
\begin{equation}
\label{eq:lift_pre}
	\ha_{\hg}(y s(\chi)) := \alpha_g(y) v_{\hg}(\chi) s(\chi)
\end{equation}
for all $y\in \aA \otimes \End(\hH_\chi,\C)$ (see Equations~\eqref{eq:iso} and~\eqref{eq:res_iso}).
It is immediate that $\ha_{\hg} \rvert_{\aA} = \alpha_g$.

If $\hg' \in q^{-1}(g)$ is another preimage, say $\hg' = z\hg$ for some $z \in Z$, then for each $\chi \in Z^*$ define $v_{\hg'}(\chi) := \chi(z) \cdot v_{\hg}$.
This again yields a valid family of partial isometries in $\aA \otimes \End(\hH_\chi)$ and induces an automorphism $\ha_{\hg'} \in \Aut_Z(\hA)$ lifting $\alpha_g$.

Note that this construction along the fibre $q^{-1}(g)$ is independent of the representative,~\ie, 
\begin{equation}
\label{eq:}
    v_{z \hg}(\chi) := \chi(z) \cdot v_{\hg},
    \qquad
    \text{and}
    \qquad
    \ha_{z\hg} = \ha_z \circ \ha_{\hg} 
\end{equation}
for all $z \in Z$ and $\hg \in q^{-1}(g)$.

Finally, to guarantee that $\ha_z = \theta_z$ for all $z \in Z$, define
\begin{equation*}
    v_{e_{\hG}}(\chi) := s(\chi) s(\chi)^*.
\end{equation*}
for each $\chi \in Z^*$.

We summarize the construction in the following lemma:

\begin{lemma}
\label{lem:lift}
Suppose that $\alpha(G) \subseteq \Aut(\aA)_{\left[\hA\right]}$.
Then the map
\begin{equation}
\label{eq:lift}
    \ha: \hG \to \Aut(\hA), \qquad \ha(\hg) := \ha_{\hg},
\end{equation}
with each $\ha_{\hg}$ constructed as above, satisfies the following properties:
\begin{enumerate}[label=\arabic*.,ref=\ref{lem:lift}.{\arabic*}]
\item
\label{lem:lift:cond1}
    $\ha_{e_{\hG}}=\id_{\hA}$ and $\ha(\hG) \subseteq \Aut_Z(\hA)$.
\item 
\label{lem:lift:cond2}
    $\ha$ is a lift of $\alpha$, \ie, $\ha_{\hg} \rvert_{\aA} = \alpha_{q(\hg)}$ for all $\hg \in \hG$. 
\item 
\label{lem:lift:cond3}
    The restriction of $\ha$ to $Z$ coincides with $\theta$, \ie, $\ha_z = \theta_z$ for all $z \in Z$.
\item 
\label{lem:lift:z-inv}
    $\ha_{z\hg} = \ha_z \circ \ha_{\hg}$ for all $z \in Z$ and $\hg \in \hG$.
\end{enumerate}
\end{lemma}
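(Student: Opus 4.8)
The plan is to verify the four listed properties directly from the explicit formulas used in the construction, treating each one as a bookkeeping exercise that unwinds the definitions of $v_{\hg}(\chi)$ and $\ha_{\hg}$ on the isotypic components $\hiso(\chi)$. Since $\hA_f = \bigoplus_{\chi \in Z^*} \hiso(\chi)$ is dense and each $\ha_{\hg}$ is a $*$-automorphism, it suffices in every case to check the relevant identity on a generic element $y s(\chi) \in \hiso(\chi)$ and then invoke continuity.

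First I would treat \ref{lem:lift:cond3}, since \ref{lem:lift:cond1} and \ref{lem:lift:z-inv} both depend on it. By construction we set $v_{e_{\hG}}(\chi) := s(\chi)s(\chi)^*$ and $v_{z}(\chi) := v_{z \cdot e_{\hG}}(\chi) = \chi(z) \cdot v_{e_{\hG}}(\chi) = \chi(z)\, s(\chi)s(\chi)^*$. Plugging this into Equation~\eqref{eq:lift_pre} gives, for $y \in \aA \otimes \End(\hH_\chi,\C)$,
\begin{equation*}
    \ha_z(y s(\chi)) = \alpha_{q(z)}(y)\, \chi(z)\, s(\chi)s(\chi)^* s(\chi) = \chi(z) \cdot y s(\chi) = \theta_z(y s(\chi)),
\end{equation*}
using $q(z) = e$, the fact that $s(\chi)$ is an isometry, that $y$ absorbs the projection $s(\chi)s(\chi)^*$ on the left side of $s(\chi)$, and the description of $\theta$ on $\hiso(\chi)$ from Equation~\eqref{eq:res_iso}. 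Hence $\ha_z = \theta_z$ for all $z \in Z$. Property \ref{lem:lift:cond1} is then immediate: $\ha_{e_{\hG}} = \theta_{e_{\hG}} = \id_{\hA}$, and $\ha(\hG) \subseteq \Aut_Z(\hA)$ holds because each $\ha_{\hg}$ was produced via~\cite[Thm.~4.1]{SchWa21} as an element of $\Aut_Z(\hA)$ lifting $\alpha_g$ — this is exactly the output of that theorem under the hypothesis $\alpha(G) \subseteq \Aut(\aA)_{[\hA]}$. Property \ref{lem:lift:cond2} is likewise built in: evaluating $\ha_{\hg}$ on $\aA = \hiso(\one)$ via Equation~\eqref{eq:lift_pre} with $\chi = \one$, $s(\one) = 1_\aA$, $v_{\hg}(\one) = 1_\aA$ gives $\ha_{\hg}(y) = \alpha_g(y)$, so $\ha_{\hg}\rvert_{\aA} = \alpha_{q(\hg)}$.

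The main point is \ref{lem:lift:z-inv}, the relation $\ha_{z\hg} = \ha_z \circ \ha_{\hg}$, and this is where the $Z$-equivariance of the partial isometries does the work. Writing $\hg' = z\hg$, we defined $v_{\hg'}(\chi) := \chi(z) \cdot v_{\hg}(\chi)$, so on $\hiso(\chi)$,
\begin{equation*}
    \ha_{z\hg}(y s(\chi)) = \alpha_{q(\hg)}(y)\, \chi(z)\, v_{\hg}(\chi)\, s(\chi),
\end{equation*}
whereas $\ha_z(\ha_{\hg}(y s(\chi))) = \ha_z\bigl(\alpha_{q(\hg)}(y)\, v_{\hg}(\chi)\, s(\chi)\bigr)$, and since $\ha_z = \theta_z$ multiplies $\hiso(\chi)$ by the scalar $\chi(z)$ while fixing $\aA$ pointwise, this equals $\alpha_{q(\hg)}(y)\, v_{\hg}(\chi)\, \chi(z)\, s(\chi)$; the two agree because $\chi(z)$ is a scalar. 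I expect the only genuine subtlety to be checking that the normalization $v_{e_{\hG}}(\chi) = s(\chi)s(\chi)^*$ is consistent with the relation $v_{z\hg}(\chi) = \chi(z) v_{\hg}(\chi)$ when $\hg = e_{\hG}$ — i.e.\ that the assignment $z \mapsto v_z(\chi)$ obtained from the fibre over $e \in G$ really is well-defined and matches $\theta$ — but this is precisely the computation in \ref{lem:lift:cond3} above, so once that is in place the rest is formal. No single step should pose a serious obstacle; the content of the lemma is that the ad hoc choices made during the construction are mutually compatible, and the verification is a matter of carefully tracking scalars $\chi(z)$ and the projections $s(\chi)s(\chi)^*$ through Equation~\eqref{eq:lift_pre}.
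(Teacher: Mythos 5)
Your proposal is correct and takes exactly the approach the paper intends: the paper states Lemma~\ref{lem:lift} as a summary of the preceding construction and offers no separate proof, so your explicit verification of the four properties from Equation~\eqref{eq:lift_pre}, the normalizations $v_{e_{\hG}}(\chi)=s(\chi)s(\chi)^*$ and $v_{\hg}(\one)=\one_\aA$, and the rule $v_{z\hg}(\chi)=\chi(z)\,v_{\hg}(\chi)$ is precisely the argument being elided. One small inaccuracy in the wording of your check of \ref{lem:lift:cond3}: the simplification comes from $s(\chi)^*s(\chi)=\one$ absorbing the projection against $s(\chi)$ on the right, not from $y$ absorbing it on the left, but the computation you display is nonetheless correct.
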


Beyond the conclusions of the lemma, it follows directly from the construction that
\begin{equation}
\label{eq:almost-grp-hom}
    \ha_{\hg} \circ \ha_{\hg'} \circ \ha_{\hg\hg'}^{-1} \in \Gau_Z(\hA)
\end{equation}
for all $\hg,\hg' \in \hG$.
This brings us to the central question of whether the map $\ha$ defines a group homomorphism.
Let $\chi \in Z^*$, let $y \in \aA \otimes \End(\hH_\chi,\C)$, and let $\hg,\hg' \in \hG$.
Repeated application of Equation~\eqref{eq:lift_pre} establishes that $\ha_{\hg} \circ \ha_{\hg'}$ acts on $\hiso(\chi)$ as 
\begin{align*}
    (\ha_{\hg} \circ \ha_{\hg'})(y s(\chi)) 
    &=
    \ha_{\hg}\left(\alpha_{q(\hg')}(y) v_{\hg'}(\chi) s(\chi)\right)
    \\
    &= 
    \alpha_{q(\hg)}\left(\alpha_{q(\hg')}(y) v_{\hg'}(\chi)\right) v_{\hg}(\chi) s(\chi)
    \\
    &=
    \alpha_{q(\hg\hg')}(y) \alpha_{q(\hg)}(v_{\hg'}(\chi)) v_{\hg}(\chi) s(\chi),
\end{align*}
On the other hand, we have
\begin{equation*}
    \ha_{\hg\hg'}(y s(\chi)) = \alpha_{q(\hg\hg')}(y) v_{\hg\hg'}(\chi) s(\chi).
\end{equation*}
Comparing these expressions leads to the following lemma:

\begin{lemma}
\label{lem:grp_hom_facsys}
Under the assumptions of Lemma~\ref{lem:lift}, the following statements are equivalent:
\begin{enumerate}[label=(\alph*)]
\item
    $\ha$ is a group homomorphism.
\item 
    For all $\hg,\hg' \in G$ and $\chi \in Z^*$, we have $v_{\hg\hg'}(\chi) = \alpha_{q(\hg)}(v_{\hg'}(\chi)) v_{\hg}(\chi)$.
\end{enumerate}
\end{lemma}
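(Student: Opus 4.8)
The equivalence is essentially immediate once one has unpacked the action of $\ha$ on isotypic components, since a $\hG$-action on $\hA$ is determined by its action on the dense $*$-subalgebra $\hA_f = \bigoplus_{\chi \in Z^*} \hiso(\chi)$, and that in turn is governed by the partial isometries $v_{\hg}(\chi)$. The plan is therefore to reduce the homomorphism property $\ha_{\hg} \circ \ha_{\hg'} = \ha_{\hg\hg'}$ to an identity about these partial isometries, using the computation already carried out immediately before the statement.

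\smallskip

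First I would observe that, since each $\ha_{\hg}$ restricts to the identity on $\aB$ and more generally acts as $\alpha_{q(\hg)}$ on $\aA$, and since $\ha_{\hg}$ preserves each $\hiso(\chi)$ as established in Section~\ref{sec:analyzing_G_structures}, the composite maps $\ha_{\hg} \circ \ha_{\hg'}$ and $\ha_{\hg\hg'}$ are both $Z$-equivariant $*$-automorphisms of $\hA$ that restrict to $\alpha_{q(\hg\hg')}$ on $\aA$; by \eqref{eq:almost-grp-hom} they differ by an element of $\Gau_Z(\hA)$. Hence they agree if and only if they agree on the dense subalgebra $\hA_f$, and this can be checked component by component on each $\hiso(\chi)$, $\chi \in Z^*$. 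Using the description $\hiso(\chi) = \{y s(\chi) : y \in \aA \otimes \End(\hH_\chi,\C)\}$ from \eqref{eq:iso}, it suffices to compare the two maps on a general element $y s(\chi)$.

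\smallskip

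Next I would invoke the computation preceding the lemma: repeated application of \eqref{eq:lift_pre} gives
\begin{equation*}
    (\ha_{\hg} \circ \ha_{\hg'})(y s(\chi)) = \alpha_{q(\hg\hg')}(y) \, \alpha_{q(\hg)}\bigl(v_{\hg'}(\chi)\bigr) v_{\hg}(\chi) \, s(\chi),
\end{equation*}
while directly from \eqref{eq:lift_pre} applied to $\hg\hg'$,
\begin{equation*}
    \ha_{\hg\hg'}(y s(\chi)) = \alpha_{q(\hg\hg')}(y) \, v_{\hg\hg'}(\chi) \, s(\chi).
\end{equation*}
Since $y$ ranges over all of $\aA \otimes \End(\hH_\chi,\C)$ and the map $y \mapsto y s(\chi)$ identifies this space with $\hiso(\chi)$, the two composites agree on $\hiso(\chi)$ precisely when
\begin{equation*}
    v_{\hg\hg'}(\chi) \, s(\chi) = \alpha_{q(\hg)}\bigl(v_{\hg'}(\chi)\bigr) v_{\hg}(\chi) \, s(\chi).
\end{equation*}
To pass from this to the cleaner statement (b), multiply both sides on the right by $s(\chi)^*$; since $v_{\hg\hg'}(\chi)$ has initial projection $s(\chi)s(\chi)^*$ and $\alpha_{q(\hg)}(v_{\hg'}(\chi)) v_{\hg}(\chi)$ is a product of partial isometries whose composite initial projection is likewise $s(\chi)s(\chi)^*$ — because $v_{\hg}(\chi)$ has initial projection $s(\chi)s(\chi)^*$ and final projection $\alpha_{q(\hg)}(s(\chi)s(\chi)^*)$, which matches the initial projection of $\alpha_{q(\hg)}(v_{\hg'}(\chi))$ — both sides are unchanged by first multiplying by $s(\chi)^*$ and then by $s(\chi)$. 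Hence the displayed equation of elements of $\hiso(\chi)$ is equivalent to the equation $v_{\hg\hg'}(\chi) = \alpha_{q(\hg)}(v_{\hg'}(\chi)) v_{\hg}(\chi)$ in $\aA \otimes \End(\hH_\chi)$. Running this equivalence over all $\chi \in Z^*$ and all $\hg,\hg' \in \hG$ yields the stated equivalence of (a) and (b).

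\smallskip

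I do not anticipate a serious obstacle here; the content is entirely bookkeeping with the partial-isometry relations from \eqref{eq:v}–\eqref{eq:res_iso}. The one point requiring a little care — and the only place where something could go wrong — is the cancellation of $s(\chi)$: one must check that the relevant source and range projections of the partial isometries on both sides coincide, so that the equation at the level of $\hiso(\chi)$ genuinely recovers the equation at the level of $\aA \otimes \End(\hH_\chi)$ rather than a weaker one. This is exactly the statement that $v_{\hg}(\chi)$ is a partial isometry with initial projection $s(\chi)s(\chi)^*$ and final projection $\alpha_{q(\hg)}(s(\chi)s(\chi)^*)$, which was recorded just after \eqref{eq:v_eq}, so the argument closes.
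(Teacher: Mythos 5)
Your proof is correct and follows essentially the same route as the paper, namely the direct computation of $(\ha_{\hg} \circ \ha_{\hg'})(ys(\chi))$ and $\ha_{\hg\hg'}(ys(\chi))$ via Equation~\eqref{eq:lift_pre} that immediately precedes the lemma. The one thing you add is an explicit justification for cancelling the trailing $s(\chi)$, using that both $v_{\hg\hg'}(\chi)$ and $\alpha_{q(\hg)}(v_{\hg'}(\chi))v_{\hg}(\chi)$ are partial isometries with initial projection $s(\chi)s(\chi)^*$; the paper leaves this step implicit (``Comparing these expressions leads to the following lemma'').
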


We proceed by considering an arbitrary map $\ha: \hG \to \Aut(\hA)$ satisfying Properties~1.--4. in Lemma~\ref{lem:lift}.
Our goal is to identify a cohomological condition under which $\ha: \hG \to \Aut(\hA)$ can be modified to become a group homomorphism.
To this end, consider the map
\begin{equation}
\label{eq:S}
    S: G \to \Aut(\Gau_Z(\hA)),
    \qquad
    S(g)(\hfi) := \ha_{\hg} \circ \hfi \circ \ha_{\hg}^{-1}.
\end{equation}
Well-definedness follows from~\hyperref[lem:lift]{Property~4.}, together with the fact that~$\Gau_Z(\hA)$ is Abelian.
Moreover, Equation~\eqref{eq:almost-grp-hom}, combined with the commutativity of $\Gau_Z(\hA)$,  implies that $S$ is in fact a group homomorphism.
This allows the cohomology group $H^2_S(G, \Gau_Z(\hA))$ to be used in the subsequent analysis.

A similar argument shows that the map
\begin{equation}
\label{eq:delta}
\delta_{\ha,G} : G \times G \to \Gau_Z(\hA), \qquad
\delta_{\ha,G}(g,g') := \ha_{\hg} \circ \ha_{\hg'} \circ \ha_{\hg\hg'}^{-1},
\end{equation}
is well-defined and constitutes an $S$-twisted 2-cocycle, \ie, an element of $Z^2_S(G,\Gau_Z(\hA))$.
Moreover, it is straightforward to verify the following independence property:

\begin{lemma}
The class $[\delta_{\ha,G}] \in H^2_S(G, \Gau_Z(\hA))$ is independent of the particular choice of map $\ha: \hG \to \Aut(\hA)$ satisfying Properties 1.--4. in Lemma~\ref{lem:lift}.
\end{lemma}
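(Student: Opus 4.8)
The plan is to show that if $\ha$ and $\ha'$ are two maps $\hG \to \Aut(\hA)$ both satisfying Conditions~1.--4. of Lemma~\ref{lem:lift}, then the cocycles $\delta_{\ha,G}$ and $\delta_{\ha',G}$ differ by a coboundary in $Z^2_S(G,\Gau_Z(\hA))$. First I would record that the $S$-action of Equation~\eqref{eq:S} does not depend on the choice of lift: if $\hg \in q^{-1}(g)$, then for any other lift $\ha'$ the automorphism $\ha'_{\hg} \circ \ha_{\hg}^{-1}$ restricts to the identity on $\aA$ (both lift $\alpha_g$) and commutes with $\theta$, hence lies in the \emph{Abelian} group $\Gau_Z(\hA)$; conjugating an element of $\Gau_Z(\hA)$ by an element of $\Gau_Z(\hA)$ is trivial, so $\ha'_{\hg} \circ \hfi \circ (\ha'_{\hg})^{-1} = \ha_{\hg} \circ \hfi \circ \ha_{\hg}^{-1}$ for every $\hfi \in \Gau_Z(\hA)$. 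Thus $S$ is intrinsic, and it even makes sense to write $S(g)$ unambiguously although $S$ is a priori defined via a choice of $\hg \in q^{-1}(g)$ — the same argument shows independence of that choice as well.

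Next, define $\varepsilon: \hG \to \Gau_Z(\hA)$ by $\varepsilon(\hg) := \ha'_{\hg} \circ \ha_{\hg}^{-1}$; by the previous paragraph this is a well-defined element of $\Gau_Z(\hA)$ for each $\hg$. By Condition~3 it equals $\theta_z \circ \theta_z^{-1} = \id$ on $Z$, and by Condition~4 it satisfies $\varepsilon(z\hg) = \ha'_{z\hg} \circ \ha_{z\hg}^{-1} = \ha_z \circ \varepsilon(\hg) \circ \ha_z^{-1} = \varepsilon(\hg)$, using commutativity of $\Gau_Z(\hA)$ and $\ha_z = \theta_z \in \Gau_Z(\hA)$. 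Hence $\varepsilon$ factors through $q$, giving a map $\bar\varepsilon: G \to \Gau_Z(\hA)$, i.e. an element of $C^1(G,\Gau_Z(\hA))$. The main computation is then a direct expansion: writing $\hg, \hg'$ for fixed lifts of $g, g'$ (so that $\hg\hg'$ is a lift of $gg'$) and using $\ha'_{\hg} = \varepsilon(\hg)\ha_{\hg}$ throughout,
\begin{align*}
\delta_{\ha',G}(g,g') &= \ha'_{\hg} \circ \ha'_{\hg'} \circ (\ha'_{\hg\hg'})^{-1} \\
&= \varepsilon(\hg) \ha_{\hg}\, \varepsilon(\hg')\ha_{\hg'}\, \ha_{\hg'}^{-1}\ha_{\hg}^{-1}\, \varepsilon(\hg\hg')^{-1} \\
&= \varepsilon(\hg)\, \bigl(\ha_{\hg}\varepsilon(\hg')\ha_{\hg}^{-1}\bigr)\, \varepsilon(\hg\hg')^{-1}\, \delta_{\ha,G}(g,g') \\
&= \bar\varepsilon(g)\, S(g)(\bar\varepsilon(g'))\, \bar\varepsilon(gg')^{-1}\, \delta_{\ha,G}(g,g'),
\end{align*}
where the third equality reinserts $\ha_{\hg}\ha_{\hg'}\ha_{\hg\hg'}^{-1} = \delta_{\ha,G}(g,g')$ after commuting it past the Abelian factor $\varepsilon(\hg\hg')^{-1}$. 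This exhibits $\delta_{\ha',G} \cdot \delta_{\ha,G}^{-1} = d_S \bar\varepsilon$, so the two cocycles are cohomologous.

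The step I expect to require the most care is justifying that $S$ (and hence the target cohomology group $H^2_S(G,\Gau_Z(\hA))$ itself) is the \emph{same} for the two lifts — one must check that $\ha$ and $\ha'$ induce literally the same homomorphism $S: G \to \Aut(\Gau_Z(\hA))$, not merely conjugate ones, for the comparison of classes to be meaningful; this is exactly what the Abelianness of $\Gau_Z(\hA)$ (Theorem~\ref{thm:gau}) buys us, and it should be stated explicitly. The rest is the bookkeeping above: verifying $\varepsilon$ is $\Gau_Z(\hA)$-valued (needs the "two lifts of $\alpha_g$ differ by a gauge transformation" observation, which is implicit in the short exact sequence for $\Aut_Z(\hA)$), that it descends to $G$, and that the coboundary identity holds. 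No continuity or freeness input is needed here, since everything takes place among automorphisms already known to lie in $\Aut_Z(\hA)$.
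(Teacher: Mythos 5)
The paper offers no proof here — it simply asserts that the independence is ``straightforward to verify'' — so there is no argument to compare against. Your verification is correct and complete: the key observations (that $\ha'_{\hg}\circ\ha_{\hg}^{-1}$ lands in $\Gau_Z(\hA)$, that $S$ is therefore literally the same homomorphism for both lifts because $\Gau_Z(\hA)$ is Abelian, that $\varepsilon$ descends to $G$ by Conditions 3 and 4 together with Abelianness, and the coboundary computation) are exactly the steps one needs, and your remark that one must check $S$ itself is well-defined before comparing classes in $H^2_S$ is a point worth making explicit that the paper glosses over.
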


Let $\delta_{\ha,\hG}$ denote the image of $\delta_{\ha,G}$ under the inflation map
\begin{equation*}
    \text{inf}: C^2(G,\Gau_Z(\hA)) \to C^2(\hG,\Gau_Z(\hA)),
    \qquad
    \text{inf}(\omega) := \omega \circ (q \times q),
\end{equation*}
which induces a homomorphism, denoted by the same symbol, at the level of cohomology:
\begin{equation*}
    \text{inf}: H^2_S(G,\Gau_Z(\hA)) \to H^2_{S \circ q}(\hG,\Gau_Z(\hA)).
\end{equation*}
It follows directly from \hyperref[lem:lift]{Property~4.} that $\delta_{\ha,\hG}$ vanishes whenever either argument lies in $Z$.
Moreover, $\delta_{\ha,\hG}$ determines the pullback extension 
\begin{equation*}
    (\alpha \circ q)^*(\Aut_Z(\hA)) := \{(\hfi,\hat{g}) \in \Aut_Z(\hA) \times \hG:\hfi \rvert_\aA = \alpha_g\}
\end{equation*}
of $\hG$ by $\Gau_Z(\hA)$.
This extension splits precisely when the class $[\delta_{\ha,\hG}] \in H^2_{S \circ q}(\hG,\Gau_Z(\hA))$ is trivial.

We henceforth assume that this class is trivial, as this is the necessary and sufficient condition for the existence of a group homomorphism $\hG \to \Aut_Z(\hA)$ lifting~$\alpha$.
Then there exists a map $\hfi: \hG \to \Gau_Z(\hA)$ with $\hfi(e_{\hG}) = \id_{\hA}$ such that $\delta_{\ha,\hG} = d_{S \circ q} \hfi$, \ie,
\begin{equation*}
    \delta_{\ha,\hG}(\hg,\hg') = \hfi(\hg) \circ (S \circ q)(\hg)(\hfi(\hg')) \circ \hfi(\hg\hg')^{-1}
\end{equation*}
for all $\hg,\hg' \in \hG$.
With this, we define a family $(\tilde{\alpha}_{\hg})_{\hg \in \hG} \subseteq \Aut_Z(\hA)$ by 
\begin{equation*}
    \tilde{\alpha}_{\hg} := \hfi(\hg)^{-1} \circ \ha_{\hg}.
\end{equation*}
A routine computation shows that
\begin{align*}
    \tilde{\alpha}_{\hg} \circ \tilde{\alpha}_{\hg'} \circ \tilde{\alpha}_{\hg\hg'}^{-1}
    &=
    \hfi(\hg)^{-1} \circ \ha_{\hg} \circ \hfi(\hg')^{-1} \circ \ha_{\hg'} \circ \ha_{\hg\hg'} \circ \hfi(\hg\hg')
    \\
    &=
    \hfi(\hg)^{-1} \circ S(\hg)\left(\hfi(\hg')^{-1}\right) \circ \ha_{\hg} \circ \ha_{\hg'} \circ \ha_{\hg\hg'}^{-1} \circ \hfi(\hg\hg')
    \\
    &=
    \hfi(\hg)^{-1} \circ S(\hg)\left(\hfi(\hg')^{-1}\right) \circ \delta_{\ha,\hG}(\hg,\hg') \circ \hfi(\hg\hg')
    \\
    &=
    \delta_{\ha,\hG}(\hg,\hg') \circ \hfi(\hg\hg') \circ S(\hg)\left(\hfi(\hg')^{-1}\right) \circ \hfi(\hg)^{-1}
    =
    \id_{\hA},
\end{align*}
where we have used that $\Gau_Z(\hA)$ is Abelian in the second-to-last equality.
Hence, the map 
\begin{equation*}
    \tilde{\alpha} : \hG \to \Aut(\hA),
    \qquad
    \tilde{\alpha}(\hg) := \tilde{\alpha}_{\hg}
\end{equation*}
is a group homomorphism.
Moreover, Lemma~\ref{lem:lift} remains valid for $\tilde{\alpha}$, except for the third assertion: the restriction of $\tilde{\alpha}$ to $Z$ may fail to coincide with the given action $\theta$.
For all $z \in Z$, we have
\begin{equation*}
    \tilde{\alpha}_z = \hfi(z)^{-1} \circ \theta_z,
\end{equation*}
so that $\tilde{\alpha}_z = \theta_z$ for all $z \in Z$ if and only if $\hfi(z) = \id_{\hA}$ for all $z \in Z$.

Since the obstruction arises from the behavior of $\hfi$ on $Z$, we examine its structure more closely.
For all $\hg \in \hG$ and $z \in Z$, one finds that
\begin{equation*}
    \hfi(\hg z) = \hfi(\hg) \circ (S \circ q)(\hg)(\hfi(z)),
\end{equation*}
a relation that follows from the vanishing of $\delta_{\ha,\hG}$ whenever either argument lies in $Z$, as noted earlier.

Assume that $\hfi(z) = \id_{\hA}$ for all $z \in Z$.
Then the above identity simplifies to $\hfi(\hg z) = \hfi(\hg)$ for all $\hg \in \hG$ and $z \in Z$, so $\hfi$ is constant on $Z$-cosets and thus descends to a well-defined map $\varphi: G \to \Gau_Z(\hA)$ with $\varphi(e_G) = \id_{\hA}$.
In this case, 
\begin{equation*}
    \delta_{\ha,G} = d_S \varphi,
\end{equation*}
\ie, the class $[\delta_{\ha,G}] \in H^2_S(G,\Gau_Z(\hA))$ is trivial.

Conversely, suppose that the class $[\delta_{\ha,G}] \in H^2_S(G,\Gau_Z(\hA))$ is trivial.
Then there exists a map $\varphi: G \to \Gau_Z(\hA)$ with $\varphi(e_G) = \id_{\hA}$ such that $\delta_{\ha,G} = d_S \varphi$.
Applying the inflation map yields
\begin{equation*}
    \delta_{\ha,\hG} = d_{S \circ q} (\varphi \circ q),
\end{equation*}
and the composition $\hfi := \varphi \circ q: \hG \to \Gau_Z(\hA)$ satisfies $\hfi(z) = \id_{\hA}$ for all $z \in Z$.

This correspondence is captured by the following lemma:

\begin{lemma}
\label{lem:grp_hom_class}
For any map $\ha: \hG \to \Aut(\hA)$ satisfying Properties 1.--4. in Lemma~\ref{lem:lift}, the following statements are equivalent:
\begin{enumerate}[label=(\alph*)]
\item
    $\delta_{\ha,\hG} = d_{S \circ q} \hfi$ for some map $\hfi: \hG \to \Gau_Z(\hA)$ satisfying $\hfi(z) = \id_{\hA}$ for all $z \in Z$.
\item
    The class $[\delta_{\ha,G}] \in H^2_S(G,\Gau_Z(\hA))$ is trivial.
\end{enumerate}
\end{lemma}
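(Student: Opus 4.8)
The plan is to make precise the two implications already established in the discussion preceding the statement. The single ingredient needed is the identity
\[
    \hfi(\hg z) = \hfi(\hg) \circ (S \circ q)(\hg)(\hfi(z)),
    \qquad \hg \in \hG,\ z \in Z,
\]
valid for any map $\hfi \colon \hG \to \Gau_Z(\hA)$ with $\delta_{\ha,\hG} = d_{S \circ q}\hfi$: it follows by evaluating the cocycle relation $\delta_{\ha,\hG}(\hg,\hg') = \hfi(\hg) \circ (S\circ q)(\hg)(\hfi(\hg')) \circ \hfi(\hg\hg')^{-1}$ at $\hg' = z$ and using that $\delta_{\ha,\hG}$ vanishes whenever one of its arguments lies in $Z$ (a consequence of the fourth assertion of Lemma~\ref{lem:lift}). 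I will also use that, by surjectivity of $q$, the inflation map $\mathrm{inf} \colon C^2(G,\Gau_Z(\hA)) \to C^2(\hG,\Gau_Z(\hA))$ intertwines the twisted coboundary operators, i.e.\ $\mathrm{inf}(d_S\varphi) = d_{S\circ q}(\varphi \circ q)$ for $\varphi \in C^1(G,\Gau_Z(\hA))$, and that $\mathrm{inf}(\delta_{\ha,G}) = \delta_{\ha,\hG}$ by construction of $\delta_{\ha,\hG}$.

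For (a)$\Rightarrow$(b): fix $\hfi$ as in (a). Since $\hfi(z) = \id_{\hA}$ for $z \in Z$, the displayed identity gives $\hfi(\hg z) = \hfi(\hg)$, so $\hfi$ is constant on the fibres of $q$ and descends to $\varphi \colon G \to \Gau_Z(\hA)$, $\varphi(g) := \hfi(\hg)$ for any $\hg \in q^{-1}(g)$, with $\varphi(e_G) = \id_{\hA}$. For $g,g' \in G$ choose $\hg \in q^{-1}(g)$, $\hg' \in q^{-1}(g')$ and note that $\hg\hg' \in q^{-1}(gg')$; then
\[
    (d_S\varphi)(g,g') = \hfi(\hg) \circ (S\circ q)(\hg)(\hfi(\hg')) \circ \hfi(\hg\hg')^{-1} = \delta_{\ha,\hG}(\hg,\hg') = \delta_{\ha,G}(g,g'),
\]
the last step using $\mathrm{inf}(\delta_{\ha,G}) = \delta_{\ha,\hG}$. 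Hence $\delta_{\ha,G} = d_S\varphi$, so $[\delta_{\ha,G}] = 0$ in $H^2_S(G,\Gau_Z(\hA))$.

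For (b)$\Rightarrow$(a): pick $\varphi \colon G \to \Gau_Z(\hA)$ with $\varphi(e_G) = \id_{\hA}$ and $\delta_{\ha,G} = d_S\varphi$, and set $\hfi := \varphi \circ q \colon \hG \to \Gau_Z(\hA)$. Applying $\mathrm{inf}$ to $\delta_{\ha,G} = d_S\varphi$ and using the two compatibilities noted above yields $\delta_{\ha,\hG} = \mathrm{inf}(\delta_{\ha,G}) = \mathrm{inf}(d_S\varphi) = d_{S\circ q}(\varphi\circ q) = d_{S\circ q}\hfi$, while $\hfi(z) = \varphi(q(z)) = \varphi(e_G) = \id_{\hA}$ for all $z \in Z$, so (a) holds. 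The argument is entirely formal; the only point to watch is the bookkeeping with the inflation map and the freedom in choosing fibre representatives, neither of which poses a genuine difficulty — the substantive content, namely the equality of the obstruction classes living in $H^2_{S\circ q}(\hG,\Gau_Z(\hA))$ and $H^2_S(G,\Gau_Z(\hA))$, has already been supplied by the discussion preceding the statement.
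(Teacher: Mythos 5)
Your proof is correct and follows essentially the same route as the paper's own argument, which is given in the discussion immediately preceding the lemma: both directions rest on the vanishing of $\delta_{\ha,\hG}$ on $Z$, the resulting relation $\hfi(\hg z)=\hfi(\hg)\circ(S\circ q)(\hg)(\hfi(z))$, descent to $\varphi$ on $G$ in one direction, and inflation of $\varphi$ in the other. The only inessential slip is attributing the identity $\mathrm{inf}(d_S\varphi)=d_{S\circ q}(\varphi\circ q)$ to surjectivity of $q$ — that identity holds because $q$ is a homomorphism, while surjectivity is what makes $\varphi$ well-defined on all of $G$ in the (a)$\Rightarrow$(b) direction.
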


\begin{corollary}
\label{cor:grp_hom_class}
The following statements are equivalent:
\begin{enumerate}[label=(\alph*)]
\item
    $\alpha$ lifts to a $\hG$-action on $\hA$ whose restriction to $Z$ coincides with the given action~$\theta$. 
\item 
    There exists a map $\ha: \hG \to \Aut(\hA)$ satisfying Properties 1.--4. in Lemma~\ref{lem:lift}, such that the class $[\delta_{\ha,G}] \in H^2_S(G,\Gau_Z(\hA))$ is trivial.
\end{enumerate}
\end{corollary}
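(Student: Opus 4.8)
The corollary is the synthesis of everything in Section~\ref{sec:constructing_G_structures} up to this point: Lemma~\ref{lem:lift}, the $S$-twisted cocycle $\delta_{\ha,G}$ of Equations~\eqref{eq:S}--\eqref{eq:delta}, its inflation $\delta_{\ha,\hG}$, and Lemma~\ref{lem:grp_hom_class}. I would therefore prove the two implications separately, with no new computations. Throughout, "$\hG$-action" in statement~(a) is read in the purely algebraic sense of a group homomorphism $\hG \to \Aut(\hA)$; continuity is imposed only in the subsequent step.

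\textbf{Direction (a) $\Rightarrow$ (b).} Suppose $\beta \colon \hG \to \Aut(\hA)$ is a group homomorphism with $\beta_{\hg}\rvert_\aA = \alpha_{q(\hg)}$ for all $\hg$ and $\beta_z = \theta_z$ for all $z \in Z$. First I would check that $\beta$ is itself an admissible choice for the map "$\ha$" appearing in~(b), i.e.\ that it satisfies Conditions~1.--4.\ of Lemma~\ref{lem:lift}: $\beta_{e_{\hG}} = \id_{\hA}$ and $\beta_{z\hg} = \beta_z \circ \beta_{\hg}$ hold because $\beta$ is a homomorphism; $\beta_{\hg}\rvert_\aA = \alpha_{q(\hg)}$ and $\beta_z = \theta_z$ are the hypotheses; and $\beta(\hG) \subseteq \Aut_Z(\hA)$ follows from centrality of $Z$, since $\theta_z \circ \beta_{\hg} = \beta_z \circ \beta_{\hg} = \beta_{z\hg} = \beta_{\hg z} = \beta_{\hg} \circ \theta_z$. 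In particular this shows $\alpha(G) \subseteq \Aut(\aA)_{[\hA]}$, so statement~(b) is meaningful. Finally, for any preimages $\hg \in q^{-1}(g)$, $\hg' \in q^{-1}(g')$ one has $\delta_{\beta,G}(g,g') = \beta_{\hg} \circ \beta_{\hg'} \circ \beta_{\hg\hg'}^{-1} = \id_{\hA}$, so $\delta_{\beta,G}$ is the neutral cocycle and $[\delta_{\beta,G}] = 0$ in $H^2_S(G,\Gau_Z(\hA))$; by the independence property recorded above, the same holds for the $\ha$ produced by Lemma~\ref{lem:lift}, though this is not needed.

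\textbf{Direction (b) $\Rightarrow$ (a).} Suppose $\ha$ satisfies Conditions~1.--4.\ of Lemma~\ref{lem:lift} and $[\delta_{\ha,G}] = 0$. By Lemma~\ref{lem:grp_hom_class}, this triviality is equivalent to the existence of a map $\hfi \colon \hG \to \Gau_Z(\hA)$ with $\hfi(z) = \id_{\hA}$ for all $z \in Z$ and $\delta_{\ha,\hG} = d_{S \circ q}\hfi$. I would then set $\tilde\alpha_{\hg} := \hfi(\hg)^{-1} \circ \ha_{\hg}$ and invoke the computation already displayed above (which uses only that $\Gau_Z(\hA)$ is Abelian, i.e.\ Theorem~\ref{thm:gau}) to conclude $\tilde\alpha_{\hg} \circ \tilde\alpha_{\hg'} \circ \tilde\alpha_{\hg\hg'}^{-1} = \id_{\hA}$, so $\tilde\alpha \colon \hG \to \Aut(\hA)$ is a group homomorphism. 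Since each $\hfi(\hg) \in \Gau_Z(\hA)$ restricts to $\id_\aA$, we get $\tilde\alpha_{\hg}\rvert_\aA = \ha_{\hg}\rvert_\aA = \alpha_{q(\hg)}$, so $\tilde\alpha$ lifts $\alpha$; and $\tilde\alpha_z = \hfi(z)^{-1} \circ \theta_z = \theta_z$ for $z \in Z$, so $\tilde\alpha\rvert_Z = \theta$. This yields the desired $\hG$-action.

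\textbf{Expected obstacle.} There is no genuine difficulty here — the corollary is bookkeeping built on the lemmas just proved. The two points deserving care are: (i) in (a) $\Rightarrow$ (b), verifying that an honest group homomorphism lifting $\alpha$ really satisfies \emph{all} of Conditions~1.--4., in particular that it forces $\alpha(G) \subseteq \Aut(\aA)_{[\hA]}$ so that~(b) even makes sense; and (ii) in (b) $\Rightarrow$ (a), correctly passing from triviality of $[\delta_{\ha,G}] \in H^2_S(G,\Gau_Z(\hA))$ to an $\hfi$ that is trivial on $Z$ itself (not merely constant on $Z$-cosets) — this is precisely the content of Lemma~\ref{lem:grp_hom_class} via the inflation map $H^2_S(G,\Gau_Z(\hA)) \to H^2_{S \circ q}(\hG,\Gau_Z(\hA))$ together with the vanishing of $\delta_{\ha,\hG}$ on $Z$-arguments.
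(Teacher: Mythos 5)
Your proof is correct and follows the same route as the paper: for (b)~$\Rightarrow$~(a) you invoke Lemma~\ref{lem:grp_hom_class} to obtain $\hfi$ trivial on $Z$ and then use the displayed computation showing $\tilde\alpha_{\hg} := \hfi(\hg)^{-1}\circ\ha_{\hg}$ is a group homomorphism, exactly as in the text; for (a)~$\Rightarrow$~(b) you observe that a genuine lifting homomorphism already satisfies Conditions~1.--4.\ and yields the identically-trivial cocycle, which is the obvious converse the paper leaves implicit. The bookkeeping you flag as deserving care (verifying $\beta(\hG)\subseteq\Aut_Z(\hA)$ via centrality, and the role of the inflation map in Lemma~\ref{lem:grp_hom_class}) is handled correctly and matches the paper's setup.
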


The preceding analysis yields the main result of the section, characterizing when the action $\alpha$ lifts compatibly to $\hA$:

\begin{theorem}
\label{thm:grp_hom_class}
Suppose that $\alpha(G) \subseteq \Aut(\aA)_{\left[\hA\right]}$, and let $\ha: \hG \to \Aut(\hA)$ be a map~satisfying Properties 1.--4. in Lemma~\ref{lem:lift}.
If the class 
\begin{equation*}
    [\delta_{\ha,G}] \in H^2_S(G,\Gau_Z(\hA))
\end{equation*}
is trivial, then $\alpha$ lifts to a $\hG$-action on $\hA$ whose restriction to $Z$ coincides with the given action~$\theta$. 
\end{theorem}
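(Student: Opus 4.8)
The plan is to repair the given map $\ha$---which lifts $\alpha$ but need not be multiplicative---by multiplying it pointwise with a suitable $\Gau_Z(\hA)$-valued cochain, so as to obtain an honest group homomorphism $\hG\to\Aut(\hA)$ that still restricts to $\theta$ on $Z$. Most of the required machinery is already in place, so the proof amounts to assembling the preceding lemmas in the right order.

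First I would recall the data attached to $\ha$. Since $\ha$ satisfies Conditions 1.--4. of Lemma~\ref{lem:lift}, Equation~\eqref{eq:almost-grp-hom} holds; the map $S\colon G\to\Aut(\Gau_Z(\hA))$ of Equation~\eqref{eq:S} is a well-defined group homomorphism, using Condition 4. together with the commutativity of $\Gau_Z(\hA)$ (Theorem~\ref{thm:gau}); and the map $\delta_{\ha,G}$ of Equation~\eqref{eq:delta} is a well-defined element of $Z^2_S(G,\Gau_Z(\hA))$. By hypothesis $[\delta_{\ha,G}]$ vanishes in $H^2_S(G,\Gau_Z(\hA))$, so the implication (b)$\Rightarrow$(a) of Lemma~\ref{lem:grp_hom_class} supplies a map $\hfi\colon\hG\to\Gau_Z(\hA)$ with $\hfi(e_{\hG})=\id_{\hA}$ and $\delta_{\ha,\hG}=d_{S\circ q}\hfi$ that, crucially, satisfies $\hfi(z)=\id_{\hA}$ for every $z\in Z$; this last property holds because such a $\hfi$ can be taken to be the inflation $\varphi\circ q$ of a cochain $\varphi\colon G\to\Gau_Z(\hA)$ trivializing $\delta_{\ha,G}$ at the level of $G$.

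Next I would put $\tilde\alpha_{\hg}:=\hfi(\hg)^{-1}\circ\ha_{\hg}$ for $\hg\in\hG$ and check three points. First, each $\tilde\alpha_{\hg}$ lies in $\Aut_Z(\hA)$ and restricts to $\alpha_{q(\hg)}$ on $\aA$, since $\hfi(\hg)\in\Gau_Z(\hA)\subseteq\Aut_Z(\hA)$ acts trivially on $\aA$ while $\ha_{\hg}|_{\aA}=\alpha_{q(\hg)}$ by Condition 2. of Lemma~\ref{lem:lift}. Second, $\tilde\alpha$ is a group homomorphism: this is precisely the displayed computation preceding Lemma~\ref{lem:grp_hom_class}, where $\delta_{\ha,\hG}=d_{S\circ q}\hfi$, the definition of $S$, and the commutativity of $\Gau_Z(\hA)$ combine to give $\tilde\alpha_{\hg}\circ\tilde\alpha_{\hg'}\circ\tilde\alpha_{\hg\hg'}^{-1}=\id_{\hA}$ for all $\hg,\hg'\in\hG$. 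Third, $\tilde\alpha|_{Z}=\theta$, because for $z\in Z$ we have $\tilde\alpha_z=\hfi(z)^{-1}\circ\ha_z=\ha_z=\theta_z$ by $\hfi(z)=\id_{\hA}$ and Condition 3. of Lemma~\ref{lem:lift}. Together these show $\tilde\alpha\colon\hG\to\Aut(\hA)$ is a $\hG$-action lifting $\alpha$ with restriction $\theta$ on $Z$ (and since each $\tilde\alpha_{\hg}$ commutes with $\theta$, it preserves $\hA^Z=\aA$ and acts there as $\alpha_{q(\hg)}$, so $\aA=\hA^Z$ as $G$-algebras); strong continuity is not part of this statement and is established separately in Lemma~\ref{lem:continuity}.

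The main obstacle is conceptual rather than computational: once $\hfi$ is in hand every verification is routine, and the delicate point is guaranteeing that $\hfi$ can be chosen to be the identity on $Z$. Merely knowing that the inflated class $[\delta_{\ha,\hG}]\in H^2_{S\circ q}(\hG,\Gau_Z(\hA))$ is trivial would split the pullback extension $(\alpha\circ q)^*(\Aut_Z(\hA))$ and yield a homomorphic lift into $\Aut_Z(\hA)$, but such a lift need not agree with $\theta$ on $Z$; it is precisely the stronger hypothesis that the class over $G$ vanishes, together with the analysis of the inflation map in Lemma~\ref{lem:grp_hom_class}, that removes this defect.
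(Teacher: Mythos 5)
Your proof is correct and follows essentially the same route as the paper: invoke Lemma~\ref{lem:grp_hom_class} (b)$\Rightarrow$(a) to produce a trivializing cochain $\hfi$ with $\hfi|_Z=\id_{\hA}$, set $\tilde\alpha_{\hg}:=\hfi(\hg)^{-1}\circ\ha_{\hg}$, and verify via the displayed computation preceding Lemma~\ref{lem:grp_hom_class} that $\tilde\alpha$ is a group homomorphism lifting $\alpha$ and restricting to $\theta$ on $Z$. Your closing observation---that triviality of the inflated class $[\delta_{\ha,\hG}]$ alone would only split the pullback extension and need not recover $\theta$ on $Z$, so the vanishing over $G$ is the operative hypothesis---is precisely the point of Lemma~\ref{lem:grp_hom_class} and is correctly identified as the delicate step.
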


\subsubsection{Imposing continuity of the lifted action}

Let $\hat{\alpha}: \hG \to \Aut(\hA)$ be a group homomorphism lifting $\alpha$ and satisfying $\ha \rvert_{Z} = \theta$.
For each $\chi \in Z^*$, this action induces an action on $\hiso(\chi)$ that is implemented by a family of partial~isometries
\begin{equation*}
    (v_{\hg}(\chi))_{\hg \in \hG} \subseteq \aA \otimes \End(\hH_\chi)
\end{equation*}
(see Equations~\eqref{eq:iso},~\eqref{eq:v}, and~\eqref{eq:res_iso}).
In this short section, we ensure the continuity of $\hat{\alpha}$ by proving the following result:

\begin{lemma}
\label{lem:continuity}
Suppose that, for each $\chi \in Z^*$,
\begin{equation*}
    v(\chi): \hG \to \aA \otimes \End(\hH_\chi),
    \qquad
    v(\chi)(\hg) := v_{\hg}(\chi) := \ha_{\hg}(s(\chi)) s(\chi)^*,
\end{equation*}
is continuous.
Then $\hat{\alpha}: \hG \to \Aut(\hA)$ is strongly continuous.
\end{lemma}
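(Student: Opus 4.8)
The plan is to follow the familiar route for proving strong continuity of a group of automorphisms: reduce to a dense subalgebra on which the action is given by an explicit formula, and there read off continuity from continuity of the implementing data. The first observation is that each $\ha_{\hg}$, being a \Star automorphism of $\hA$, is isometric, so $\sup_{\hg \in \hG}\|\ha_{\hg}\| = 1$. A standard $\varepsilon/3$-argument then shows that it suffices to prove that $\hg \mapsto \ha_{\hg}(x)$ is continuous for every $x$ in the dense \Star subalgebra $\hA_f = \bigoplus_{\chi \in Z^*} \hiso(\chi)$ (see Equation~\eqref{eq:A_f}); and by linearity of the maps $\ha_{\hg}$ it is enough to treat $x \in \hiso(\chi)$ for an arbitrary fixed $\chi \in Z^*$.

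Next I would fix $\chi \in Z^*$ and write $x = y\,s(\chi)$ with $y \in \aA \otimes \End(\hH_\chi,\C)$ as in Equation~\eqref{eq:iso}, so that, by Equation~\eqref{eq:res_iso},
\[
    \ha_{\hg}(x) = \alpha_{q(\hg)}(y)\, v_{\hg}(\chi)\, s(\chi) \qquad (\hg \in \hG).
\]
Here the first factor depends continuously on $\hg$, being the composition of the continuous homomorphism $q\colon \hG \to G$ with the strongly continuous $G$-action on $\aA \otimes \End(\hH_\chi,\C)$ obtained by amplifying $\alpha$ (strong continuity on the finite matrix amplification is immediate from strong continuity on $\aA$); the second factor $\hg \mapsto v_{\hg}(\chi)$ is continuous by hypothesis; and the third factor is constant. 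Since the three factors have norms bounded by $\|y\|$, $1$ and $1$ respectively, and since multiplication of these ($\hA$-valued matrix) expressions is jointly continuous on bounded sets, the product $\hg \mapsto \ha_{\hg}(x)$ is continuous. This yields continuity on each $\hiso(\chi)$, hence on $\hA_f$, and hence — via the $\varepsilon/3$-argument, which uses precisely that the $\ha_{\hg}$ are isometric — on all of $\hA$, as desired.

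I do not anticipate any real obstacle in this argument. The only points requiring a moment's care are the uniform boundedness feeding the density reduction (automatic, since \Star automorphisms are isometric) and the joint continuity of multiplication on bounded sets (standard for a Banach algebra, via $\|ab - a'b'\| \le \|a\|\,\|b - b'\| + \|a - a'\|\,\|b'\|$). In effect, all the substantive content of the continuity question has been packaged into the hypothesis, namely the continuity of the partial-isometry-valued maps $v(\chi)$; arranging that when one actually constructs the lifted action is where the real work lies, not in the present lemma.
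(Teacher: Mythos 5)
Your argument is correct and follows essentially the same route as the paper: reduce via density and uniform boundedness of the automorphisms to continuity of $\hg \mapsto \ha_{\hg}(x)$ on each isotypic component $\hiso(\chi)$, then read this off from the formula $\ha_{\hg}(ys(\chi)) = \alpha_{q(\hg)}(y)\,v_{\hg}(\chi)\,s(\chi)$ together with the continuity of $q$, the strong continuity of $\alpha$, and the hypothesis on $v(\chi)$. The only cosmetic difference is that the paper verifies the last step by expanding the left $\aA$-valued inner product of the difference into four terms and invoking Cauchy--Schwarz, whereas you appeal directly to joint continuity of multiplication on bounded sets in the ambient C\Star algebra, which is a little more streamlined but amounts to the same estimate.
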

\begin{proof}
A standard argument reduces the claim to verifying that, for all $\chi \in Z^*$ and $x \in \hiso(\chi)$, the map $\hG \ni \hg \mapsto \ha_{\hg}(x) \in \hA$
is continuous.

Let $\chi \in Z^*$, let $y \in \aA \otimes \End(\hH_\chi, \C)$, let $\hg, \hg' \in \hG$, and set $g := q(\hg)$ and $g' := q(\hg')$.
Recall that the topology on $\hiso(\chi)$ is induced by the left $\aA$-valued inner product ${}_{\aA}\left\langle x, x' \right\rangle := x x'^*$ (see Section~\ref{sec:cstar}).
Since both $v_{\hg}(\chi)$ and $v_{\hg'}(\chi)$ have initial projection $s(\chi)s(\chi)^*$, we compute:
\begingroup
\allowdisplaybreaks
\begin{align*}
    &{}_{\aA}\left\langle 
    \ha_{\hg}(y s(\chi))-\ha_{\hg'}(y s(\chi)), \ha_{\hg}(y s(\chi))-\ha_{\hg'}(y s(\chi)) \right\rangle
    \\
    &=
    {}_{\aA}\left\langle 
    (\alpha_g(y) v_{\hg}(\chi)-\alpha_{g'}(y) v_{\hg'}(\chi)) s(\chi), 
    (\alpha_g(y) v_{\hg}(\chi)-\alpha_{g'}(y) v_{\hg'}(\chi)) s(\chi) 
    \right\rangle
    \\
    &=
    {}_{\aA}\left\langle 
    \alpha_g(y) v_{\hg}(\chi) - \alpha_{g'}(y) v_{\hg'}(\chi), 
    \alpha_g(y) v_{\hg}(\chi) - \alpha_{g'}(y) v_{\hg'}(\chi) \right\rangle
    \\
    &={}_{\aA}\left\langle 
    \alpha_g(y) (v_{\hg}(\chi) - v_{\hg'}(\chi))
    + 
    (\alpha_g(y) - \alpha_{g'}(y)) v_{\hg'}(\chi),
    \right. 
    \\
    &\hspace{3em}
    \left. 
    \alpha_g(y) (v_{\hg}(\chi) - v_{\hg'}(\chi))
    + 
    (\alpha_g(y) - \alpha_{g'}(y)) v_{\hg'}(\chi) 
    \right\rangle
    \\
    &={}_{\aA}\left\langle
    \alpha_g(y) (v_{\hg}(\chi) - v_{\hg'}(\chi)), 
    \alpha_g(y) (v_{\hg}(\chi) - v_{\hg'}(\chi)) 
    \right\rangle
    \\
    &\hspace{3em}+
    {}_{\aA}\left\langle
    \alpha_g(y) (v_{\hg}(\chi) - v_{\hg'}(\chi)),
    (\alpha_g(y) - \alpha_{g'}(y)) v_{\hg'}(\chi)
    \right\rangle
    \\
    &\hspace{3em}+
    {}_{\aA}\left\langle
    (\alpha_g(y) - \alpha_{g'}(y)) v_{\hg'}(\chi),
    \alpha_g(y) (v_{\hg}(\chi) - v_{\hg'}(\chi))
    \right\rangle
    \\
    &\hspace{3em}+
    {}_{\aA}\left\langle
    (\alpha_g(y) - \alpha_{g'}(y)) v_{\hg'}(\chi),
    (\alpha_g(y) - \alpha_{g'}(y)) v_{\hg'}(\chi)
    \right\rangle
\end{align*}
\endgroup
The claim now follows from the Cauchy--Schwarz inequality, the continuity of $v(\chi)$, the strong continuity of $\alpha$, and the continuity of $q$.
\end{proof}

\subsubsection*{Establishing that the resulting C\Star dynamical system is free}

Assuming that $\alpha$ lifts to a strongly continuous action $\hat{\alpha}: \hG \to \Aut(\hA)$ extending $\theta$ on $Z$, we now establish the following theorem as the final step in the construction:

\begin{theorem}
\label{thm:freeness}
The C$^*$-dynamical system $(\hA, \hG, \hat{\alpha})$ is free.
\end{theorem}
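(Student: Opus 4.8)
The plan is to verify the freeness criterion recalled at the end of Section~\ref{sec:free}: namely, that for every irreducible representation $\hat{\sigma} \in \Irrep(\hG)$ there exist a finite-dimensional Hilbert space $\hH_{\hat\sigma}$ and an isometry $\hat{s}(\hat\sigma) \in \hA \otimes \End(V_{\hat\sigma}, \hH_{\hat\sigma})$ satisfying $\ha_{\hg}(\hat{s}(\hat\sigma)) = \hat{s}(\hat\sigma) \cdot \hat\sigma_{\hg}$ for all $\hg \in \hG$. The key observation is that, since $Z$ is central in $\hG$, Schur's lemma forces $Z$ to act on $V_{\hat\sigma}$ by a single character $\chi_{\hat\sigma} \in Z^*$. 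Thus every irreducible of $\hG$ ``sits over'' a definite $Z$-isotype, and the natural strategy is to build $\hat{s}(\hat\sigma)$ out of the already-available data: the $Z$-equivariant isometry $s(\chi_{\hat\sigma}) \in \hA \otimes \End(\C, \hH_{\chi_{\hat\sigma}})$ from the fixed factor system of $(\hA, Z, \theta)$, together with freeness of the fixed-point system $(\aA, G, \alpha)$.

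More precisely, first I would restrict $\hat\sigma$ to $G$ via $q$ — or rather, decompose it under the action of $\aA$. The isotypic component $\hiso(\chi_{\hat\sigma})$ is a Morita equivalence $\aA$-bimodule carrying a $\hG$-action; restricting this $\hG$-action further, $\aA$ itself is a free $G$-$C^*$-algebra, so for each irreducible $\sigma \in \Irrep(G)$ appearing we have isometries $s(\sigma) \in \aA \otimes \End(V_\sigma, \hH_\sigma)$ with $\alpha_g(s(\sigma)) = s(\sigma)\sigma_g$. The element $v(\chi_{\hat\sigma}) \colon \hG \to \aA \otimes \End(\hH_{\chi_{\hat\sigma}})$ of Equation~\eqref{eq:v} encodes the $\hG$-action on $\hiso(\chi_{\hat\sigma})$, with the equivariance~\eqref{eq:v_eq}; combining $v_{\hg}(\chi_{\hat\sigma})$ with a suitable $s(\sigma) \otimes (\text{intertwiner})$ built from the branching $\hat\sigma|_G \leftrightsquigarrow \sigma$, and then composing with $s(\chi_{\hat\sigma})$, should produce a $\hG$-equivariant element of $\hA \otimes \End(V_{\hat\sigma}, \hH)$ for an appropriate $\hH$. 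One then checks it is an isometry (or renormalizes: a $\hG$-equivariant element $x$ with $x^*x \in Z(\aA)^G$ invertible can be rescaled to an isometry, using that $\hA \otimes \End(V_{\hat\sigma})$-valued inner products land in $\aA$ and the relevant positive element is central and invertible because freeness of $(\hA, Z, \theta)$ makes $s(\chi_{\hat\sigma})s(\chi_{\hat\sigma})^*$ behave well).

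Alternatively — and this may be cleaner — I would prove freeness of $(\hA, \hG, \ha)$ directly from freeness of the two ``layers'' $(\hA, Z, \theta)$ and $(\aA, G, \alpha)$ via the Ellwood-map formulation, exploiting the extension $\one \to Z \to \hG \to G \to \one$. The point is that the Ellwood map for $(\hA, \hG)$ can be analyzed fibrewise over $q\colon \hG \to G$: density of the range reduces, using a partition-of-unity / Stone--Weierstrass argument on $C(\hG, \hA)$, to (i) density in the ``$Z$-direction'' inside each fibre, which is exactly freeness of $(\hA, Z, \theta)$ (known from~\cite[Prop.~3.17]{SchWa15}), and (ii) density in the ``$G$-direction'', which follows from freeness of $(\aA, G, \alpha)$ together with the fact that $\aA = \hA^Z \subseteq \hA$ and that $\ha$ lifts $\alpha$. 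I expect \textbf{the main obstacle} to be exactly the bookkeeping in this reduction: one must ensure the approximating elements in the two directions can be chosen compatibly (the $G$-approximation lives in $\aA \otimes \aA$ but must be spread out over all of $\hG$ using $\ha$, not just over a section of $q$), and that continuity is not lost. The cleanest route is probably the first one (the isometry criterion of~\cite[Lem.~3.3]{SchWa17}), since it localizes everything to a single $\chi_{\hat\sigma}$ and a single $s(\chi_{\hat\sigma})$, turning the problem into the purely algebraic task of splicing a $G$-equivariant isometry over $\aA$ with the partial isometry $v(\chi_{\hat\sigma})$ in a $\hG$-equivariant way.
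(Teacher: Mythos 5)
Your second alternative is closest to what the paper does, but the paper realizes the ``fibrewise over $q$'' idea precisely by taking the Fourier (isotypic) decomposition under the $Z\times Z$-action rather than any partition-of-unity argument. Concretely, the paper observes that the Ellwood map is $Z\times Z$-equivariant for explicit actions on $\hA\otimes\hA$ and $\Cont(\hG,\hA)$, identifies the $(\chi,\chi')$-isotypic pieces as $\hiso(\chi)\otimes\hiso(\chi')$ and $\Cont(\hG)(\chi')\otimes\hiso(\chi+\chi')$, and then proves density on each graded piece separately (Theorem~\ref{thm:freeness_iso}) by factoring the restricted Ellwood map through the surjection $m_{s(\chi)}$, the continuous surjection $m_{v(\chi)s(\chi)}$, and the Ellwood map $\Phi_\alpha$ of the free system $(\aA,G,\alpha)$. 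The ``bookkeeping'' worry you raise --- whether the $Z$- and $G$-direction approximations can be chosen compatibly --- is exactly what the graded decomposition resolves: once you work on a single bidegree $(\chi,\chi')$, the $Z$-freeness is entirely absorbed into the single partial-isometry map $v(\chi)$, and only the $G$-Ellwood density remains. So your second route is right in spirit but stops short of the one device (the $Z^*\times Z^*$ grading) that actually closes the argument.

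Your first route --- the isometry criterion of~\cite[Lem.~3.3]{SchWa17} --- has a genuine gap. You invoke a ``branching $\hat\sigma|_G\leftrightsquigarrow\sigma$,'' but $G$ is a \emph{quotient} of $\hG$, not a subgroup, so $\hat\sigma|_G$ is not defined unless $\chi_{\hat\sigma}=\one$. For $\chi_{\hat\sigma}\neq\one$, what $\hat\sigma$ induces on $G$ is a \emph{projective} representation (twisted by the cohomology class of the extension), not an ordinary one, and the isometries $s(\sigma)$ available from freeness of $(\aA,G,\alpha)$ are attached to ordinary $\sigma\in\Irrep(G)$. Consequently there is no intertwiner to ``splice'' $s(\sigma)$ with $v(\chi_{\hat\sigma})$ and $s(\chi_{\hat\sigma})$; the naive candidate $s(\chi_{\hat\sigma})\otimes 1_{V_{\hat\sigma}}$ is $Z$-equivariant but not $\hG$-equivariant, and correcting it amounts to solving exactly the projective-representation problem that the $Z\times Z$-graded Ellwood argument sidesteps. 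This could perhaps be repaired via Mackey--Clifford theory for the central extension, but that would be a substantially longer and different argument than what you sketch.
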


We begin by noting that the Ellwood map
\begin{equation*}
    \Phi: \hA \otimes_{\text{alg}} \hA \to \Cont(\hG,\hA),
    \qquad
    \Phi(s \otimes t)(\hg) := s \ha_{\hg}(t)
\end{equation*}
is $Z \times Z$-equivariant with respect to the action on $\hA \otimes \hA$ is given by
\begin{equation*}
    (z_1,z_2) \acts (s \otimes t) := \theta_{z_1}(s) \otimes \theta_{z_2}(t),
\end{equation*}
and the action on $\Cont(\hG,\aA)$ given by
\begin{equation*}
    ((z_1,z_2) \acts f )(\hg) := \theta_{z_1}\left(f(z_1^{-1}z_2\hg)\right).
\end{equation*}
The isotypic components corresponding to $(\chi, \chi') \in Z^* \times Z^*$ are given by
\begin{equation*}
    \hA \otimes \hA(\chi,\chi') = \hiso(\chi) \otimes \hiso(\chi')
    \qquad
    \text{and}
    \qquad
    \Cont(\hG,\hA)(\chi,\chi') = \Cont(\hG)(\chi') \otimes \hiso(\chi+\chi'),
\end{equation*}
where the latter space is defined by
\begin{equation*}
    \Cont(\hG)(\chi') \otimes \hiso(\chi+\chi')
    :=
    \big\{\hf:\hG \to \hiso(\chi+\chi') : (\forall \hg \in \hG)(\forall z \in Z) \, \hf(z\hg)=\chi'(z) \cdot \hf(\hg)\big\}.
\end{equation*}
To establish Theorem~\ref{thm:freeness}, it therefore suffices to prove the following equivariant version:

\begin{theorem}
For each $(\chi,\chi') \in Z^* \times Z^*$, the map
\begin{equation*}
    \Phi_{(\chi,\chi')}: \hiso(\chi) \otimes_{\text{alg}} \hiso(\chi') \to \Cont(\hG)(\chi') \otimes \hiso(\chi+\chi'),
    \qquad
    \Phi_{(\chi,\chi')}(s \otimes t)(\hg) := s \ha_{\hg}(t)
\end{equation*}
has dense range.
\label{thm:freeness_iso}
\end{theorem}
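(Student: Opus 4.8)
### Proof proposal

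The plan is to reduce the claim to freeness of the original system $(\aA, G, \alpha)$ via the characterization of freeness recalled at the end of Section~\ref{sec:free}, namely that $(\aA,G,\alpha)$ is free if and only if for each $\sigma \in \Irrep(G)$ there is a finite-dimensional Hilbert space $\hH_\sigma$ and an isometry $s(\sigma) \in \aA \otimes \End(V_\sigma,\hH_\sigma)$ with $\alpha_g(s(\sigma)) = s(\sigma)\sigma_g$. First I would fix $(\chi,\chi') \in Z^* \times Z^*$ and unwind the target space: an element of $\Cont(\hG)(\chi')\otimes\hiso(\chi+\chi')$ is a continuous map $\hf\colon\hG\to\hiso(\chi+\chi')$ with $\hf(z\hg) = \chi'(z)\hf(\hg)$. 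Using $\hiso(\chi+\chi') = \{w\, s(\chi+\chi') : w \in \aA\otimes\End(\hH_{\chi+\chi'},\C)\}$ from Equation~\eqref{eq:iso} and the explicit form of $\ha$ on isotypic components from Equation~\eqref{eq:res_iso}, the map $\Phi_{(\chi,\chi')}$ becomes concrete: for $y\, s(\chi)\in\hiso(\chi)$ and $y'\, s(\chi')\in\hiso(\chi')$, one computes $\Phi_{(\chi,\chi')}(y s(\chi)\otimes y's(\chi'))(\hg) = y s(\chi)\, \ha_{\hg}(y' s(\chi')) = y s(\chi)\,\alpha_{q(\hg)}(y')\, v_{\hg}(\chi')\, s(\chi')$, and the product $s(\chi)\,\alpha_{q(\hg)}(y')$ can be pushed past via $\Delta$ and the partial isometries relating $\hiso(\chi)\otimes_\aA\hiso(\chi')$ to $\hiso(\chi+\chi')$ (the multiplication map $m_{(\chi,\chi')}$ of Equation~\eqref{eq:multiplication}, which is a Morita equivalence isomorphism).

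Next, the key structural point: restricting $\ha$ to $\hG$ (not just $Z$) and using that $(\hA,Z,\theta)$ is free with isotypic components the Morita equivalence bimodules $\hiso(\chi)$, I would decompose the $G$-part. Pick $\sigma\in\Irrep(G)$ and pull it back along $q$ to a representation $\sigma\circ q$ of $\hG$; since $Z$ is central, $\sigma\circ q$ restricted to $Z$ is a multiple of a single character $\chi_\sigma\in Z^*$, and conversely every irreducible representation of $\hG$ arises this way. The freeness isometries $s(\sigma)$ for $(\aA,G,\alpha)$, tensored appropriately with the factor-system isometries $s(\chi)$ for $(\hA,Z,\theta)$, should assemble into freeness isometries for $(\hA,\hG,\ha)$: one checks that $\hat s(\hat\sigma) := (s(\chi)\text{-part}) \cdot (\text{lift of } s(\sigma))$ lies in $\hA \otimes \End(V_{\hat\sigma},\hH_{\hat\sigma})$ for suitable finite-dimensional $\hH_{\hat\sigma}$, is an isometry, and satisfies $\ha_{\hg}(\hat s(\hat\sigma)) = \hat s(\hat\sigma)\,\hat\sigma_{\hg}$ — the last identity following from Equation~\eqref{eq:res_iso} together with the equivariance of the original $s(\sigma)$ and the cocycle-type relations among the $v_{\hg}(\chi)$ (Lemma~\ref{lem:grp_hom_facsys}(b), valid since $\ha$ is now a genuine homomorphism). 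By the cited characterization this already gives Theorem~\ref{thm:freeness}; for the equivariant refinement Theorem~\ref{thm:freeness_iso} I would then read off density in each fixed isotypic block $\Cont(\hG)(\chi')\otimes\hiso(\chi+\chi')$ by Peter--Weyl decomposition of $\Cont(\hG)$ along $\hG$ and matching the $\chi'$-graded pieces.

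Alternatively — and this may be cleaner to write — I would argue density directly. The range of $\Phi_{(\chi,\chi')}$ is a right $\aB$-submodule (acting on the right-hand tensor leg), and it contains, for each $\hg_0\in\hG$, enough ``bump-function-like'' elements: using that $\hiso(\chi')$ is a full Morita equivalence bimodule one produces, for any target value $t_0\in\hiso(\chi+\chi')$ and any $\hg_0$, an element of $\hiso(\chi)\otimes_{\text{alg}}\hiso(\chi')$ whose image under $\Phi_{(\chi,\chi')}$ approximates a function concentrated near $\hg_0$ with value $\approx t_0$; freeness of $(\hA,Z,\theta)$ furnishes the relevant partition-of-unity ingredients, and strong continuity of $\ha$ (Lemma~\ref{lem:continuity}) controls the error in a neighbourhood of $\hg_0$. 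Stone--Weierstrass-type reasoning, or more precisely the standard fact that a $\Cont(\hG)$-valued range that separates fibres and is fibrewise dense is dense, then closes the argument.

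The main obstacle I expect is the bookkeeping in the first approach — correctly identifying, for $\hat\sigma\in\Irrep(\hG)$ lying over $\chi_{\hat\sigma}\in Z^*$ and $\sigma\in\Irrep(G)$, the Hilbert space $\hH_{\hat\sigma}$ and verifying that the assembled element is genuinely an isometry in $\hA\otimes\End(V_{\hat\sigma},\hH_{\hat\sigma})$ satisfying the equivariance relation, since this requires combining the $Z$-factor system $(\hH,\gamma,\omega)$ with the $G$-freeness data in a way compatible with the twisting by the $v_{\hg}(\chi)$. In the direct approach, the analogous difficulty is making the ``bump function'' construction precise across $\hG$ (rather than just near the identity) while respecting the $\chi'$-grading on the target.
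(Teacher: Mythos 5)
Your second (bump-function) approach is too vague to assess, so let me focus on the first. There you want to prove freeness of $(\hA,\hG,\ha)$ via the isometry criterion of~\cite[Lem.~3.3]{SchWa17}, that is, produce for each $\hat\sigma\in\Irrep(\hG)$ a finite-dimensional $\hH_{\hat\sigma}$ and an isometry $\hat s(\hat\sigma)\in\hA\otimes\End(V_{\hat\sigma},\hH_{\hat\sigma})$ with $\ha_{\hg}(\hat s(\hat\sigma))=\hat s(\hat\sigma)\hat\sigma_{\hg}$, assembled from the $(\aA,G,\alpha)$-isometries $s(\sigma)$ and the $(\hA,Z,\theta)$-isometries $s(\chi)$. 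The gap is in the clause ``conversely every irreducible representation of $\hG$ arises this way.'' By Schur's lemma each $\hat\sigma\in\Irrep(\hG)$ has a well-defined central character $\chi_{\hat\sigma}\in Z^*$, but only those with $\chi_{\hat\sigma}=1$ factor through $G$; in general $\Irrep(\hG)$ is \emph{not} parametrized by $\Irrep(G)\times Z^*$ (it would be exactly when the extension splits as $\hG\cong G\times Z$). For $\chi_{\hat\sigma}\neq 1$ the representation $\hat\sigma$ corresponds to a $\chi_{\hat\sigma}$-twisted projective representation of $G$, and there is no canonical decomposition $\hat\sigma\cong\sigma\otimes\chi_{\hat\sigma}$ with $\sigma\in\Irrep(G)$ from which your ``$\hat s(\hat\sigma):=(s(\chi)\text{-part})\cdot(\text{lift of }s(\sigma))$'' could be built. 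What you do have to work with are the partial isometries $v_{\hg}(\chi)$, which satisfy the cocycle identity $v_{\hg\hg'}(\chi)=\alpha_{q(\hg)}(v_{\hg'}(\chi))v_{\hg}(\chi)$ — these encode precisely the projective twist — so the construction is not impossible in principle, but it would require a genuine Mackey-machine argument that your sketch does not contain. You also reverse the logical dependency (you want Theorem~\ref{thm:freeness} $\Rightarrow$ Theorem~\ref{thm:freeness_iso}, whereas the paper proves the implication the other way); the reversal itself could be salvaged by equivariant averaging, but it does not repair the gap above.

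The paper avoids all of this by never leaving the $Z^*$-graded picture. It shows that $\Phi_{(\chi,\chi')}$ factors through the Ellwood map $\Phi_\alpha$ of the base system via explicit module maps: the surjective left-multiplication $m_{s(\chi)}:\aA\otimes\End(\hH_\chi,\C)\to\hiso(\chi)$, the continuous surjection $m_{v(\chi)s(\chi)}:\Cont(G,\aA\otimes\End(\hH_\chi,\C))\to\Cont(\hG)(\chi)\otimes\hiso(\chi)$, and the multiplication maps $m_{(\chi,\chi')}$. Concretely, one first proves that $\Phi_\chi:\aA\otimes_{\text{alg}}\hiso(\chi)\to\Cont(\hG)(\chi)\otimes\hiso(\chi)$, $\Phi_\chi(a\otimes t)(\hg):=a\ha_{\hg}(t)$ has dense range via the intertwining relation
\begin{equation*}
    \Phi_\chi\circ\bigl(\id_\aA\otimes m_{s(\chi)}\bigr)=m_{v(\chi)s(\chi)}\circ\bigl(\Phi_\alpha\otimes\id_{\End(\hH_\chi,\C)}\bigr),
\end{equation*}
and then deduces Theorem~\ref{thm:freeness_iso} from the second intertwining relation
\begin{equation*}
    \Phi_{(\chi,\chi')}\circ\bigl(m_{(\chi,\one)}\otimes\id_{\hiso(\chi')}\bigr)
    =\bigl(\id_{\Cont(\hG)(\chi)}\otimes m_{(\chi,\chi')}\bigr)\circ\bigl(\id_{\hiso(\chi)}\otimes\Phi_{\chi'}\bigr).
\end{equation*}
This uses only freeness of $(\aA,G,\alpha)$, the Morita structure of the $\hiso(\chi)$, and the partial-isometry relations satisfied by the $v_{\hg}(\chi)$ — no representation theory of $\hG$ enters. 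If you want to pursue your approach you would need to explicitly unwind $\Irrep(\hG)$ as a $Z^*$-fibered space over something involving twisted $\Irrep(G)$'s, which is considerably heavier than what is actually required.
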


We split the proof Theorem~\ref{thm:freeness_iso} into a sequence of auxiliary results.
Our first lemma is immediate from the fact that each $s(\chi)$, for $\chi \in Z^*$, is an isometry:

\begin{lemma}
For each $\chi \in Z^*$ the left multiplication
\begin{equation*}
    m_{s(\chi)}: \aA \otimes \End(\hH_\chi,\mathbb{C}) \to \hiso(\chi),
    \qquad
    m_{s(\chi)}(y) := ys(\chi)
\end{equation*}
is surjective.
\label{lem:m_s}
\end{lemma}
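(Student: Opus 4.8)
\medskip

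The plan is to exploit the explicit description of the isotypic component $\hiso(\chi)$ recorded in Equation~\eqref{eq:iso}, namely $\hiso(\chi) = \{y s(\chi) : y \in \aA \otimes \End(\hH_\chi,\C)\}$, together with the fact that $s(\chi) \in \hA \otimes \End(\C,\hH_\chi)$ is an isometry, i.e.\ $s(\chi)^* s(\chi) = \one_\aA$ (equivalently $\one$ in $\aA \otimes \End(\C) \cong \aA$). Surjectivity of $m_{s(\chi)}$ onto $\hiso(\chi)$ is then essentially immediate from Equation~\eqref{eq:iso}: every element of $\hiso(\chi)$ is by definition of the form $y s(\chi)$ with $y \in \aA \otimes \End(\hH_\chi,\C)$, which is precisely $m_{s(\chi)}(y)$.

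\medskip

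In slightly more detail, the steps I would carry out are as follows. First, I would recall from Section~\ref{sec:constructing_G_structures} (the discussion of the fixed factor system) that $s(\chi)$ is chosen to be an isometry in $\hA \otimes \End(\C,\hH_\chi)$ satisfying $\theta_z(s(\chi)) = \chi(z) \cdot s(\chi)$ for all $z \in Z$; in particular $s(\chi)^* s(\chi) = \one$. Second, I would note that for $y \in \aA \otimes \End(\hH_\chi,\C)$ the product $y s(\chi)$ lies in $\hiso(\chi)$: indeed $\theta_z(y s(\chi)) = y\, \theta_z(s(\chi)) = \chi(z) \cdot y s(\chi)$ since $\theta$ fixes $\aA$ pointwise, so $y s(\chi)$ belongs to the $\chi$-isotypic component, confirming that $m_{s(\chi)}$ indeed maps into $\hiso(\chi)$. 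Third, for surjectivity, I would simply invoke Equation~\eqref{eq:iso}: given any $x \in \hiso(\chi)$, by that description there exists $y \in \aA \otimes \End(\hH_\chi,\C)$ with $x = y s(\chi) = m_{s(\chi)}(y)$. (Alternatively, one produces $y$ explicitly as $y := x\, s(\chi)^*$, and checks $x = x s(\chi)^* s(\chi)$ using $s(\chi) s(\chi)^* = $ the support projection of $\hiso(\chi)$ together with the fact that $x = x s(\chi)^* s(\chi)$ holds for $x \in \hiso(\chi)$; this recovers Equation~\eqref{eq:iso}.)

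\medskip

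There is essentially no obstacle here: the content of the lemma is a direct unpacking of the normal form~\eqref{eq:iso} for isotypic components together with the isometry relation $s(\chi)^* s(\chi) = \one$. The only mild subtlety worth a sentence is making sure the target of $m_{s(\chi)}$ is correctly identified as all of $\hiso(\chi)$ (not a proper subspace), which is exactly what Equation~\eqref{eq:iso} guarantees. This lemma is a bookkeeping step, setting up the reduction by which the equivariant Ellwood map $\Phi_{(\chi,\chi')}$ of Theorem~\ref{thm:freeness_iso} can be analyzed after pulling out the isometries $s(\chi)$, $s(\chi')$, $s(\chi+\chi')$.
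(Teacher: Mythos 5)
Your proposal is correct and takes essentially the same approach as the paper, which proves the lemma in a single line by invoking the isometry relation $s(\chi)^* s(\chi) = \one_\aA$ (equivalently, by citing the normal form~\eqref{eq:iso}). One small caveat: your ``alternative'' argument via the support projection $s(\chi)s(\chi)^*$ is unnecessary and circular as phrased --- once one sets $y := x\,s(\chi)^*$, the identity $y\,s(\chi) = x\,s(\chi)^*s(\chi) = x$ follows immediately from $s(\chi)^*s(\chi)=\one_\aA$, with no need to appeal to the support projection or to the very identity being verified.
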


To formulate the next result, we recall that for each $\chi \in Z^*$, the action $\ha$ on $\hiso(\chi)$ is implemented by a continuous map 
\begin{equation*}
    v(\chi): \hG \to \aA \otimes \End(\hH_\chi),
    \qquad
    v(\chi)(\hg) := v_{\hg}(\chi),
\end{equation*}
where each $v_{\hg}(\chi)$ is a partial isometry with initial projection $s(\chi) s(\chi)^*$ and final projection $\alpha_{q(\hg)}(s(\chi) s(\chi)^*)$, satisfying the equivariance relation 
\begin{equation*}
   v_{z\hg}(\chi) = \chi(z) \cdot v_{\hg}(\chi) 
\end{equation*}
for all $z \in Z$ and~$\hg \in \hG$ (see Equations~\eqref{eq:iso}--\eqref{eq:res_iso}).

Furthermore, we consider $\Cont(G,\aA \otimes \End(\hH_\chi,\mathbb{C}))$ as a left Hilbert $\Cont(G,\aA)$-module with respect to the canonical left action and the left $\Cont(G,\aA)$-valued inner product defined by
\begin{equation*}
    {}_{\Cont(G,\aA)}\langle f,f' \rangle 
    :=
    f {f'}^*
\end{equation*}
for all $f,f' \in \Cont(G,\aA \otimes \End(\hH_\chi,\mathbb{C}))$.

\begin{lemma}
For each $\chi \in Z^*$ the map
\begin{gather*}
    m_{v(\chi)s(\chi)}: \Cont(G,\aA \otimes \End(\hH_\chi,\mathbb{C})) \to \Cont(\hG)(\chi) \otimes \hiso(\chi),
    \\
    m_{v(\chi)s(\chi)}(f)(\hg) := f(q(\hg)) v_{\hg}(\chi) s(\chi)
\end{gather*}
is a continuous surjection.
\label{lem:mvs}
\end{lemma}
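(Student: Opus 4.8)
The plan is to establish the two claimed properties of $m_{v(\chi)s(\chi)}$ separately, namely continuity and surjectivity, with the bulk of the work lying in surjectivity. First I would verify that $m_{v(\chi)s(\chi)}$ is well-defined, i.e.\ that for $f \in \Cont(G,\aA\otimes\End(\hH_\chi,\C))$ the function $\hg \mapsto f(q(\hg))v_{\hg}(\chi)s(\chi)$ indeed lies in $\Cont(\hG)(\chi)\otimes\hiso(\chi)$: continuity follows from the continuity of $q$, of $f$, and of $v(\chi)$ (the latter by hypothesis, via Equation~\eqref{eq:v}), together with the fact that multiplication in $\hA$ is continuous; the $Z$-equivariance condition $\hf(z\hg) = \chi(z)\cdot\hf(\hg)$ follows directly from the equivariance relation~\eqref{eq:v_eq}, namely $v_{z\hg}(\chi) = \chi(z)\cdot v_{\hg}(\chi)$, since $q(z\hg) = q(\hg)$. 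Continuity of the map itself, with respect to the relevant module norms, then follows from a standard estimate using the Cauchy--Schwarz inequality on the left $\Cont(G,\aA)$-valued inner product, exactly parallel to the computation in the proof of Lemma~\ref{lem:continuity}, using that each $v_{\hg}(\chi)$ is a partial isometry and that $s(\chi)$ is an isometry so norms are preserved upon right multiplication by $v_{\hg}(\chi)s(\chi)$.

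For surjectivity, the strategy is to factor $m_{v(\chi)s(\chi)}$ through the combination of $m_{s(\chi)}$ from Lemma~\ref{lem:m_s} and the Ellwood-type freeness data for the free $G$-action on $\aA$. More precisely, take an arbitrary $\hf \in \Cont(\hG)(\chi)\otimes\hiso(\chi)$; using Lemma~\ref{lem:m_s} and the explicit description~\eqref{eq:iso} of $\hiso(\chi)$, one can write, for each $\hg$, the element $\hf(\hg) \in \hiso(\chi)$ as $\hf(\hg) = y(\hg)v_{\hg}(\chi)s(\chi)$ for a suitable $y(\hg) \in \aA\otimes\End(\hH_\chi,\C)$; here one exploits that $v_{\hg}(\chi)$ is a partial isometry with final projection $\alpha_{q(\hg)}(s(\chi)s(\chi)^*)$, so that right multiplication by $v_{\hg}(\chi)^*$ followed by $s(\chi)^*$ recovers $y(\hg)$ up to a controlled correction, and then one checks that $y$ descends to a continuous function on $G$ by the $Z$-equivariance of both $\hf$ and $v(\chi)$. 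Then $m_{v(\chi)s(\chi)}(y) = \hf$. Since only the closure of the range needs to be hit in applications, it would in fact suffice to produce a dense subset of preimages, but the factorization above gives genuine surjectivity.

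The main obstacle will be handling the partial-isometry bookkeeping cleanly: $v_{\hg}(\chi)$ is \emph{not} unitary but only a partial isometry between the fixed projection $s(\chi)s(\chi)^*$ and the moving projection $\alpha_{q(\hg)}(s(\chi)s(\chi)^*)$, so one must be careful that the candidate preimage $y(\hg) := \hf(\hg)s(\chi)^*v_{\hg}(\chi)^*$ (or a variant thereof) lies in the right space and depends continuously on $\hg$ in a way that is compatible with the $Z$-equivariance; continuity of $\hg \mapsto v_{\hg}(\chi)^*$ is automatic from continuity of $v(\chi)$ and the involution, but one should double-check that the reconstruction identity $y(\hg)v_{\hg}(\chi)s(\chi) = \hf(\hg)$ holds on the nose — this is where the initial projection $s(\chi)s(\chi)^*$ and the identity $v_{\hg}(\chi)s(\chi) = \ha_{\hg}(s(\chi))$ are used. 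Once the reconstruction is verified pointwise and the continuity of $\hg\mapsto y(\hg)$ and its invariance $y(z\hg)=y(\hg)$ are established, so that $y \in \Cont(G,\aA\otimes\End(\hH_\chi,\C))$, the surjectivity is immediate, and combined with the continuity established in the first paragraph this completes the proof of Lemma~\ref{lem:mvs}.
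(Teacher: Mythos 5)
Your proposal is essentially the paper's proof: you identify the same explicit candidate preimage $f(g) := \hf(\hg)\, s(\chi)^*\, v_{\hg}(\chi)^*$, verify that it descends to $G$ using the $Z$-equivariance relations~\eqref{eq:v_eq}, and then recover $\hf$ pointwise from the fact that $v_{\hg}(\chi)$ has initial projection $s(\chi)s(\chi)^*$ and $s(\chi)$ is an isometry. The only substantive imprecision is in the continuity claim: right multiplication by $v_{\hg}(\chi)s(\chi)$ does \emph{not} preserve the left $\Cont(G,\aA)$-valued inner product (one only gets $\langle m_{v(\chi)s(\chi)}(f), m_{v(\chi)s(\chi)}(f)\rangle = \langle f p(\chi), f p(\chi)\rangle$ with $p(\chi)(g) = \alpha_g(s(\chi)s(\chi)^*)$ a projection, so the map is merely contractive, not isometric), and the paper bounds this via Raeburn's module inequality rather than Cauchy--Schwarz; likewise, neither Lemma~\ref{lem:m_s} nor Ellwood freeness is actually invoked in the paper's argument for this lemma. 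These are minor presentational slips, not gaps.
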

\begin{proof}
Let $\chi \in Z^*$ and let $\hf \in \Cont(\hG)(\chi) \otimes \hiso(\chi)$.
Define
\begin{equation*}
    f:G \to \aA \otimes \End(\hH_\chi,\mathbb{C}),
    \qquad
    f(g) := \hf(\hg) s(\chi)^* v_{\hg}^*(\chi),
\end{equation*}
where $\hg \in q^{-1}(g)$ is any preimage.

We first observe that, although it is not a priori clear that, for each $\hg \in \hG$, the product $\hf(\hg) s(\chi)^* v_{\hg}^*(\chi)$ lies in $\aA \otimes \End(\hH_\chi,\mathbb{C})$, this can be easily deduced from the properties of the individual terms.

To show that $f$ is well-defined, let $\hg' = z\hg$ for some $z \in Z$. 
From the equivariance of $\hf$ and $v(\chi)$, one obtains that
\begin{equation*}
    \hf(\hg') s(\chi)^* v_{\hg'}^*(\chi)
    =
    \hf(z\hg) s(\chi)^* v_{z\hg}^*(\chi)
    =
    \chi(z) \cdot \overline{\chi(z)} \cdot \hf(\hg) s(\chi)^* v_{\hg}^*(\chi)
    =
    f(g),
\end{equation*}
so $f(g)$ is independent of the choice of $\hg \in q^{-1}(g)$.

Since both $\hf$ and $v(\chi)$ are continuous, it follows that $f \in \Cont(G, \aA \otimes \End(\hH_\chi,\mathbb{C}))$.

Surjectivity is established upon verifying that $m_{v(\chi)s(\chi)}(f) = \hf$. 
For any $\hg \in \hG$,
\begin{equation*}
    m_{v(\chi)s(\chi)}(f)(\hg) 
    =
    f(q(\hg)) v_{\hg}(\chi) s(\chi)
    =
    \hf(\hg) s(\chi)^* v_{\hg}^*(\chi) v_{\hg}(\chi) s(\chi).
\end{equation*}
As $v_{\hg}(\chi)$ is a partial isometry with initial projection $s(\chi) s(\chi)^*$, the expression simplifies to $\hf(\hg) s(\chi)^* s(\chi) s(\chi)^* s(\chi) = \hf(\hg)$, which confirms that $m_{v(\chi)s(\chi)}(f) = \hf$, as required.

It remains to prove that $m_{v(\chi)s(\chi)}$ is continuous.
To this end, recall that the topology on $\Cont(\hG)(\chi) \otimes \hiso(\chi)$ is induced by the left $\Cont(G, \aA)$-valued inner product
\begin{equation*}
    {}_{\Cont(G,\aA)}\left\langle \hf,\hf'\right\rangle
    :=
    \hf {\hf'}^*
\end{equation*}
(see Section~\ref{sec:cstar}).
Furthermore, define $p(\chi) \in \Cont(G,\aA \otimes \End(\hH_\chi))$ by
\begin{equation*}
    p(\chi)(g) := \alpha_g(s(\chi) s(\chi)^*).
\end{equation*}

Now, let $f \in \Cont(G,\aA \otimes \End(\hH_\chi,\mathbb{C}))$, let $g \in G$, and let $\hg \in q^{-1}(\hg)$.
Then
\begin{align*}
    {}_{\Cont(G,\aA)}\langle m_{v(\chi)s(\chi)}(f), m_{v(\chi)s(\chi)}(f) \rangle(g)
    &=
    f(g) v_{\hg}(\chi) s(\chi) s(\chi)^* v_{\hg}(\chi)^* f(g)^*
    \\
    &=
    f(g) v_{\hg}(\chi) v_{\hg}(\chi)^* v_{\hg}(\chi) v_{\hg}(\chi)^* f(g)^*
    \\
    &=
    f(g) v_{\hg}(\chi) v_{\hg}(\chi)^* f(g)^*
    \\
    &=
    f(g) \alpha_g(s(\chi) s(\chi)^*) f(g)^*
    \\
    &=
    {}_{\Cont(G,\aA)}\langle f p(\chi), f p(\chi) \rangle(g),
\end{align*}
where we have used that $v_{\hg}(\chi)$ is a partial isometry with initial projection $s(\chi) s(\chi)^*$ and final projection $\alpha_{q(\hg)}(s(\chi) s(\chi)^*)$.
Hence,
\begin{equation*}
    {}_{\Cont(G,\aA)}\langle m_{v(\chi)s(\chi)}(f), m_{v(\chi)s(\chi)}(f) \rangle = {}_{\Cont(G,\aA)}\langle f p(\chi), f p(\chi) \rangle.
\end{equation*}
By~\cite[Cor.~2.22]{Rae98}, we have the inequality
\begin{equation*}
    {}_{\Cont(G,\aA)}\langle f p(\chi), f p(\chi) \rangle
    \leq
    \sup_{g \in G}\lVert p(\chi)(g) \rVert_{\aA \otimes \End(\hH_\chi,\mathbb{C})} \cdot {}_{\Cont(G,\aA)}\langle f,f\rangle,
\end{equation*}
which implies that $m_{v(\chi)s(\chi)}$ is continuous.
This completes the proof.
\end{proof}

To proceed, let $\Phi_\alpha: \aA \otimes_{\text{alg}} \aA \to \Cont(G,\aA)$ denote the Ellwood map associated with the free C\Star dynamical system $(\aA,G,\alpha)$.
Also fix $\chi \in Z^*$, and consider the map
\begin{equation*}
    \Phi_\chi: \aA \otimes_{\text{alg}} \hiso(\chi) \to \Cont(\hG)(\chi) \otimes \hiso(\chi),
    \qquad
    \Phi_\chi(a \otimes t)(\hg) := a\ha_{\hg}(t).
\end{equation*}
A direct computation shows that 
\begin{equation*}
     \Phi_\chi \circ \left(\id_{\aA} \otimes\; m_{s(\chi)}\right) 
     = 
     m_{v(\chi)s(\chi)} \circ \left(\Phi_\alpha \otimes \, \id_{\End(\hH_\chi,\mathbb{C})}\right)
\end{equation*}
on $\aA \otimes_{\text{alg}} \aA \otimes \End(\hH_\chi,\mathbb{C})$.
By Lemma~\ref{lem:m_s}, $m_{s(\chi)}$ is surjective, while Lemma~\ref{lem:mvs} ensures that $m_{v(\chi)s(\chi)}$ is continuous and surjective.
Together with the density of the range of $\Phi_\alpha$, this implies that $\Phi_\chi$ has dense range.
We record this for later use:

\begin{corollary}
\label{cor:phichi}
For each $\chi \in Z^*$ the map
\begin{equation*}
    \Phi_\chi: \aA \otimes_{\text{alg}} \hiso(\chi) \to \Cont(\hG)(\chi) \otimes \hiso(\chi),
    \qquad
    \Phi_\chi(a \otimes t)(\hg) := a\ha_{\hg}(t)
\end{equation*}
has dense range.
\end{corollary}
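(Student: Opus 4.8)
The plan is to deduce this from the freeness of the ground system $(\aA,G,\alpha)$, using the two surjections of Lemmas~\ref{lem:m_s} and~\ref{lem:mvs} to identify $\rran\Phi_\chi$ with the range of the Ellwood map $\Phi_\alpha$ of $(\aA,G,\alpha)$, tensored with a finite-dimensional identity and pushed forward by a continuous surjection. Concretely, I would first record the factorization
\begin{equation*}
    \Phi_\chi \circ \bigl(\id_{\aA} \otimes m_{s(\chi)}\bigr)
    =
    m_{v(\chi)s(\chi)} \circ \bigl(\Phi_\alpha \otimes \id_{\End(\hH_\chi,\mathbb{C})}\bigr)
\end{equation*}
on $\aA \otimes_{\text{alg}} \aA \otimes \End(\hH_\chi,\mathbb{C})$. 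This is the single computation requiring any care: evaluating both sides on an elementary tensor $a \otimes b \otimes y$ at a point $\hg \in \hG$, with $g := q(\hg)$, the left-hand side equals $a\,\ha_{\hg}\bigl(by\,s(\chi)\bigr)$, which by Equation~\eqref{eq:res_iso} is $a\,\alpha_g(by)\,v_{\hg}(\chi)\,s(\chi)$, while the right-hand side is $m_{v(\chi)s(\chi)}\bigl(g' \mapsto a\,\alpha_{g'}(b)\,y\bigr)(\hg) = a\,\alpha_g(b)\,y\,v_{\hg}(\chi)\,s(\chi)$; the two agree once $\alpha$ is understood to act trivially on the $\End(\hH_\chi,\mathbb{C})$-leg.

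Granting the factorization, the conclusion is soft. Since $m_{s(\chi)}$ is surjective (Lemma~\ref{lem:m_s}), so is $\id_{\aA} \otimes m_{s(\chi)}$ on the algebraic tensor product, and hence $\rran\Phi_\chi = \rran\bigl(\Phi_\chi \circ (\id_{\aA} \otimes m_{s(\chi)})\bigr)$. Freeness of $(\aA,G,\alpha)$ means $\Phi_\alpha$ has dense range in $\Cont(G,\aA)$; tensoring with the identity of the finite-dimensional space $\End(\hH_\chi,\mathbb{C})$ preserves this, so $\Phi_\alpha \otimes \id_{\End(\hH_\chi,\mathbb{C})}$ has dense range in $\Cont\bigl(G,\aA \otimes \End(\hH_\chi,\mathbb{C})\bigr)$. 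Finally, $m_{v(\chi)s(\chi)}$ is a continuous surjection (Lemma~\ref{lem:mvs}), and a continuous surjection carries a dense subset to a dense subset; composing, $m_{v(\chi)s(\chi)} \circ \bigl(\Phi_\alpha \otimes \id_{\End(\hH_\chi,\mathbb{C})}\bigr)$ has dense range, and therefore so does $\Phi_\chi$.

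I do not expect a genuine obstacle here: the analytic substance has already been absorbed into Lemmas~\ref{lem:m_s} and~\ref{lem:mvs} and into the freeness hypothesis, and the argument merely glues these together. The one point demanding attention is the verification of the commuting square, where one must keep straight the two conventions for tensoring $\aA$ against matrix blocks — $\aA \otimes \End(\hH_\chi,\mathbb{C})$ versus $\hA \otimes \End(\mathbb{C},\hH_\chi)$ — and use that each $v_{\hg}(\chi)$ is a partial isometry with initial projection $s(\chi)s(\chi)^*$; but this is bookkeeping rather than a conceptual hurdle.
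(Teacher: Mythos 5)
Your proposal is correct and matches the paper's argument line for line: the same factorization $\Phi_\chi \circ (\id_{\aA} \otimes m_{s(\chi)}) = m_{v(\chi)s(\chi)} \circ (\Phi_\alpha \otimes \id_{\End(\hH_\chi,\mathbb{C})})$, followed by the same appeal to Lemma~\ref{lem:m_s}, Lemma~\ref{lem:mvs}, and freeness of $(\aA,G,\alpha)$. You merely spell out the pointwise verification of the commuting square and the density-pushforward step that the paper compresses into ``a direct computation shows.''
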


We are now ready to prove Theorem~\ref{thm:freeness_iso}.
The strategy mirrors that of the preceding case. 
To facilitate the argument, we recall that, for each $(\chi,\chi') \in Z^* \times Z^*$, the map
\begin{equation*}
    m_{(\chi,\chi')} : \hiso(\chi) \otimes_{\text{alg}} \hiso(\chi') \to \hiso(\chi+\chi'),
    \qquad
    m_{(\chi,\chi')}(s \otimes t) := st,
\end{equation*}
is continuous with dense range (see Equation~\eqref{eq:multiplication}).

\begin{proof}[Proof of Theorem~\ref{thm:freeness}]
Let $(\chi,\chi') \in Z^* \times Z^*$.
A brief computation gives
\begin{equation*}
    \Phi_{(\chi,\chi')} \circ \left(m_{(\chi,\one)} \otimes \, \id_{\hiso(\chi')}\right)
    =
    \left(\id_{\Cont(\hG)(\chi)} \otimes \; m_{(\chi,\chi')}\right) \circ \left(\id_{\hiso(\chi)} \otimes \; \Phi_{\chi'} \right)
\end{equation*}
on $\hiso(\chi) \otimes_{\text{alg}} \aA \otimes_{\text{alg}} \hiso(\chi')$.
Given that $m_{(\chi,\one)}$ is in fact surjective, that $m_{(\chi,\chi')}$ is continuous with dense range, and that $\Phi_{\chi'}$ has dense range (by Corollary~\ref{cor:phichi}), it follows that $\Phi_{(\chi,\chi')}$ also has dense range, as claimed.
\end{proof}

\subsubsection*{The main theorem}

For convenience, we now state the main theorem, which gathers the results established throughout this section.

\begin{theorem}
\label{thm:main}
Let $(\aA,G,\alpha)$ be a free C\Star dynamical system with fixed point algebra $\aB$, and let $\one \rightarrow Z \rightarrow \hG \rightarrow G \rightarrow \one$ be a central extension of compact groups.
Assume the following:
\begin{itemize}
\item 
    Let $\mathsf{p}: Z^* \to \Pic(\aA)$ be a group homomorphism with trivial characteristic class $\kappa(\mathsf{p})$, and let $(\hA,Z,\theta)$ be a free C\Star dynamical system realizing $\mathsf{p}$.
\item 
    Suppose that $\alpha(G) \subseteq \Aut(\aA)_{\left[\hA\right]}$, and let $\ha: \hG \to \Aut(\hA)$ be a map satisfying~Properties 1.--4. in Lemma~\ref{lem:lift}.
    Assume further that the class $$[\delta_{\ha,G}] \in H^2_S(G,\Gau_Z(\hA))$$ is trivial, and denote the induced group homomorphism $\hG \to \Aut(\hA)$ again by $\ha$.
\item 
    Suppose that, for each $\chi \in Z^*$, the map
    \begin{equation*}
    v(\chi): \hG \to \aA \otimes \End(\hH_\chi),
    \qquad
    v(\chi)(\hg) := v_{\hg}(\chi) := \ha_{\hg}(s(\chi)) s(\chi)^*,
\end{equation*}
    is continuous.
    Here $s(\chi)$, $\chi \in Z^*$, is any family of \hyperref[sec:facsys]{freeness-characterizing isometries} for $(\hA,Z,\theta)$.
\end{itemize}
Then $(\hA,\hG,\ha)$ defines a $\hG$-structure.
\end{theorem}

Combining Theorem~\ref{thm:main} with Corollary~\ref{cor:class1} yields:

\begin{corollary}
\label{cor:class2}
Under the assumptions of Theorem~\ref{thm:main}, the set $\Ext(\hG,(\aA,G,\alpha),\mathsf{p}_{\hA})$, where $\mathsf{p}_{\hA}$ denotes the Picard homomorphism associated with $(\hA,\hG,\ha)$ (see Equation~\eqref{eq:pic}), is parametrized by the cohomology group $H^2_\Delta(Z^*,\mathcal{U}(Z(\aA)^G))$.
\end{corollary}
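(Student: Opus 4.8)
The plan is to derive the corollary as a direct synthesis of the two principal results of this section: Theorem~\ref{thm:main} supplies the \emph{existence} of a $\hG$-structure over the prescribed data, and Corollary~\ref{cor:class1} — whose sole hypothesis is the nonemptiness of the relevant $\Ext$-set — then upgrades this to the torsor statement. No new construction or estimate is required; the work is purely organizational.

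Concretely, I would proceed as follows. First, invoke Theorem~\ref{thm:main}: under the three bulleted hypotheses, $(\hA,\hG,\ha)$ is a $\hG$-structure. Its associated Picard homomorphism $\mathsf{p}_{\hA}: Z^* \to \Pic_{\hG}(\aA)$ (Equation~\eqref{eq:pic}) is therefore defined, and because the $Z$-isotypic components of $\hA$ are, by construction, precisely the Morita equivalence bimodules realizing $\mathsf{p}$, the homomorphism $\mathsf{p}_{\hA}$ lifts $\mathsf{p}$ along the forgetful map $\Pic_{\hG}(\aA) \to \Pic(\aA)$; in particular the induced $Z^*$-module structure $\Delta$ on $\mathcal{U}(Z(\aA)^G)$ (\cf~Lemma~\ref{lem:MSE:isoME} and the discussion after Equation~\eqref{eq:froh}) is the same one that enters $H^2_\Delta$, since it depends only on the underlying $\aA$-bimodules and not on the equivariant refinement. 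The key consequence is that $[(\hA,\hG,\ha)] \in \Ext(\hG,(\aA,G,\alpha),\mathsf{p}_{\hA})$, so this set is nonempty. This places us squarely in the setting of Corollary~\ref{cor:class1} with $\mathsf{p} := \mathsf{p}_{\hA}$, which provides a well-defined simply transitive action of $H^2_\Delta(Z^*,\mathcal{U}(Z(\aA)^G))$ on $\Ext(\hG,(\aA,G,\alpha),\mathsf{p}_{\hA})$. Since a nonempty set equipped with a simply transitive group action is a principal homogeneous space, fixing the base point $[(\hA,\hG,\ha)]$ yields the bijection $[\omega] \mapsto [\omega \acts (\hA,\hG,\ha)]$ from the cohomology group onto $\Ext(\hG,(\aA,G,\alpha),\mathsf{p}_{\hA})$, which is the asserted parametrization.

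I do not anticipate a genuine obstacle, as the mathematical substance lives entirely in Theorem~\ref{thm:main} and Corollary~\ref{cor:class1}. The one point demanding a sentence of care is to confirm that the coefficient module $\Delta$ appearing in $H^2_\Delta$ is literally the same object in both ingredients — which follows from the fact that $\mathsf{p}_{\hA}$ projects onto $\mathsf{p}$ in $\Pic(\aA)$, combined with the insensitivity of the Fröhlich action to the $\hG$-equivariant structure — so that the cohomology group named in the statement is unambiguous.
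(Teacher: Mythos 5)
Your proof is correct and follows precisely the same route as the paper, which simply states that the corollary is obtained by combining Theorem~\ref{thm:main} (supplying a concrete $\hG$-structure, hence nonemptiness of $\Ext(\hG,(\aA,G,\alpha),\mathsf{p}_{\hA})$) with Corollary~\ref{cor:class1} (upgrading nonemptiness to a simply transitive $H^2_\Delta(Z^*,\mathcal{U}(Z(\aA)^G))$-action). Your extra sentence verifying that the $Z^*$-module structure $\Delta$ is the same in both ingredients is a sound piece of bookkeeping the paper leaves implicit.
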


\begin{remark}
While the construction presented above yields a valid $\hG$-structure, another seemingly natural procedure does not.
Given a group homomorphism $\mathsf{p}: Z^* \to \Pic_{\hG}(\aA)$, one may attempt to mimic the procedure in~\cite[Sec.~4]{SchWa15} to construct a C\Star dynamical system $(\hA,\hG,\ha)$. 
However, the resulting C\Star dynamical system generally fails to be free, and thus does not define a genuine $\hG$-structure.
\end{remark}

\section{Examples}
\label{sec:examples}

Having established the general results, we now work out a series of examples that~demonstrate how the abstract framework applies in concrete settings.

\subsection{A simple example}
\label{sec:toyspin}

To introduce the general theory, we begin with a simple example that illustrates the application of the concepts, with an emphasis on classification.
Fix the following data:
\begin{itemize}
\item
    a unital C\Star algebra~$\aB$;
\item
	a central extension $\one \rightarrow Z \rightarrow \hG \xrightarrow{q} G \rightarrow \one$ of compact groups.
\end{itemize}
Set $\aA := \Cont(G,\aB)$ and equip it with the canonical right translation action of $G$,
\begin{equation*}
    (\rho_g \acts f)(g') := f(g'g), \qquad g,g' \in G,
\end{equation*}
yielding a cleft C\Star dynamical system $(\aA,G,\rho)$.
Analogously, consider $\hA := \Cont(\hG,\aB)$~equipped with the canonical right translation action of~$\hG$,
\begin{equation*}
    (\hat{\rho}_{\hg} \acts f)(\hg') := f(\hg'\hg), \qquad \hg,\hg' \in \hG,
\end{equation*}
which gives rise to the cleft C\Star dynamical system $(\hA,\hG,\hat{\rho})$.

$\aA$ embeds into $\hA$ as a C\Star subalgebra via the natural inclusion $f \mapsto f \circ q$.
Under this embedding, ${\hat{\rho}_{\hg}} \rvert_{\aA} = \rho_{q(\hg)}$ for all $\hg \in \hG$, and hence $\rho(G) \subseteq \Aut(\aA)_{[\hA]}$.
Most importantly, $(\hA,\hG,\hat{\rho})$ is precisely a $\hG$-structure for $(\aA,G,\rho)$, as is straightforward to check.

Let $\mathsf{p}_{\hA}$ denote the Picard homomorphism associated with $(\hA,\hG,\hat{\rho})$ (see Equation~\eqref{eq:pic}).
By Corollary~\ref{cor:class1}, 
\begin{equation*}
    \Ext(\hG,(\aA,G,\rho),\mathsf{p}_{\hA})
\end{equation*}
- the set of equivalence classes of $\hG$-structures giving rise to $\mathsf{p}_{\hA}$ - corresponds bijectively to $H^2_\Delta(Z^*,\mathcal{U}(Z(\aB)))$.

For example, if $Z = \mathbb{T}$, then $H^2_\Delta(Z^*,\mathcal{U}(Z(\aB)))$ is trivial for any $Z^*$-module structure on $\mathcal{U}(Z(\aB))$ (see, \eg,~\cite[Chap.~VI.6]{MacLane95}). 
Consequently, in this case, there exists - up to equivalence - only one $\hG$-structure with Picard homomorphism $\mathsf{p}_{\hA}$, namely $(\hA,\hG,\hat{\rho})$.

In contrast, for $Z = \mathbb{T}^2$, $H^2_\Delta(Z^*,\mathcal{U}(Z(\aB)))$ is generally non-trivial (see,~\eg,~\cite[Prop.~6.2]{Wa12}).
For instance, for the trivial $Z^*$-module structure on $\mathcal{U}(Z(\aB))$, it is isomorphic to $\mathcal{U}(Z(\aB))$ (see~\cite[Prop.~6.1]{Wa12}).

\subsection{Coverings of quantum tori}
\label{sec:qtori}

In this section, we show how quantum tori give rise to liftings along finite coverings, thereby providing a natural class of examples for our \hyperref[sec:goal]{lifting problem}.

To set the stage, we briefly recall their basic structure.
Let $n \in \mathbb{N}$ and let $\theta$ be a real skew-symmetric $n \times n$ matrix.  
The \emph{quantum $n$-torus} $\mathbb{T}^n_\theta$ is the universal C\Star algebra generated by unitaries $u_1,\ldots,u_n$ satisfying the relations $u_k u_\ell = \exp(2\pi \imath \theta_{k\ell})\, u_\ell u_k$ for all $1 \leq k, \ell \leq n$.
The matrix $\theta$ is called \emph{quite irrational} if the only $\lambda \in \mathbb{Z}^n$ satisfying $\exp(2\pi \imath \langle \lambda, \theta \cdot \mu \rangle) = 1$ for all $\mu \in \mathbb{Z}^n$ is $\lambda = 0$.  
In this case, $\mathbb{T}^n_\theta$ is simple and has trivial center, \ie, $Z(\mathbb{T}^n_\theta) = \mathbb{C}$.

The classical torus $\mathbb{T}^n$ acts on $\mathbb{T}^n_\theta$ via \emph{gauge transformations}, defined by $\gamma_z(u_k):=z_k \cdot u_k$ for all  $z=(z_1,\ldots,z_n) \in \mathbb{T}^n$ and $1 \leq k \leq n$.
The resulting C\Star dynamical system $(\mathbb{T}^n_\theta,\mathbb{T}^n,\gamma)$ is both ergodic and cleft.
For convenience, we identify $\mathbb{T}^n$ with the quotient $\mathbb{R}^n/\mathbb{Z}^n$ and consider the corresponding action of $\mathbb{R}^n/\mathbb{Z}^n$ on $\mathbb{T}^n_\theta$, given by $\gamma_s(u_k) := \exp(2 \pi \imath s_k) \cdot u_k$ for all $s=(s_1,\ldots,s_n) \in \mathbb{R}^n$ and $1 \le k \le n$. 

Now, let $M$ be an invertible $n \times n$ matrix with integer entries, and define $\Gamma := M \cdot \mathbb{Z}^n \subseteq \mathbb{Z}^n$. 
This yields a full-rank lattice $\Gamma$, so that the quotient $\mathbb{R}^n / \Gamma$ is compact.
We thus obtain a central extension of compact Abelian groups:
\begin{equation*}
    1 \longrightarrow \mathbb{Z}^n/\Gamma \longrightarrow \mathbb{R}^n/\Gamma \longrightarrow \mathbb{R}^n/\mathbb{Z}^n \longrightarrow 1.
\end{equation*}
Assume further that $\theta$ is \emph{quite irrational}, and let $\theta'$ be a real skew-symmetric $n \times n$ matrix satisfying
\begin{equation*}
    M \theta' M^T \in \theta + M_n(\mathbb{Z}).
\end{equation*}
As may be concluded from~\cite[Thm.~4.6]{SchWa18}, the C\Star dynamical system $(\mathbb{T}^n_{\theta'},\mathbb{R}^n/\Gamma,\gamma' \circ M^{-1})$ then defines an $\mathbb{R}^n/\Gamma$-structure for $(\mathbb{T}^n_\theta,\mathbb{R}^n/\mathbb{Z}^n,\gamma)$.

With $\mathsf{p}_{\theta'}$ the associated Picard homomorphism (see Equation~\eqref{eq:pic}),  
Corollary~\ref{cor:class1} provides a classification of  
\begin{equation*}
    \Ext(\mathbb{R}^n/\Gamma,(\mathbb{T}^n_\theta,\mathbb{R}^n/\mathbb{Z}^n,\gamma),\mathsf{p}_{\theta'})
\end{equation*}
- the set of equivalence classes of $\mathbb{R}^n/\Gamma$-structures inducing $\mathsf{p}_{\theta'}$ - in terms of $H^2_\Delta((\mathbb{Z}^n/\Gamma)^*,\mathbb{T})$.
The induced $(\mathbb{Z}^n/\Gamma)^*$-module structure on $\mathbb{T}$ is determined by a group homomorphism $\Delta: (\mathbb{Z}^n/\Gamma)^* \to \Aut(\mathbb{T}) \cong \mathbb{Z}_2$, whose explicit form must be identified in concrete examples.

\subsection{A noncommutative \texorpdfstring{$\Spin(3)$}{Spin(3)}-structure}
\label{sec:connes-landi}

In this section, we develop a noncommutative version of a $\Spin(3)$-structure, demonstrating how the Connes--Landi sphere naturally serves as an example of such a structure for the quantum projective 7-space.
Note that $\Spin(3) \cong \SU(2)$.

We start by reviewing a noncommutative C\Star algebraic version of the classical $\SU(2)$-Hopf fibration over the four-sphere (see~\cite{LaSu05} for a generalization in the context of Hopf--Galois extensions). 
Let $\theta \in \R$ and let $\theta'$ be the skewsymmetric $4 \times 4$-matrix with $\theta_{1,2}' = \theta_{3,4}' = 0$ and $\theta'_{1,3} = \theta'_{1,4} = \theta_{2,3}' = \theta'_{2,4} = \theta/2$. 
The Connes--Landi sphere $\aA(\mathbb S_{\theta'}^7)$ is the universal unital C\Star algebra generated by normal elements $z_1, \dots, z_4$ subject to the relations
\begin{equation*}	
	z_i z_j = e^{2\pi\imath \theta'_{i,j}} \; z_j z_i, 
	\qquad
	z_j^* z_i = e^{2\pi \imath \theta'_{i,j}}\;  z_i z_j^*,
	\qquad
	\sum_{k=1}^4 z_k^* z_k^{} = \one
\end{equation*}
for all $1 \le i,j \le 4$. 

According to~\cite[Expl.~3.5]{SchWa17}, it is equipped with a free action $\alpha$ of $\SU(2)$ given for each $U \in \SU(2)$ on generators by
\begin{equation*}
	\alpha_U: (z_1, \dots, z_4) \mapsto (z_1, \dots, z_4) 
    \begin{pmatrix} U & 0 \\ 0 & U \end{pmatrix}.
\end{equation*}
The corresponding fixed point algebra is the universal unital C\Star algebra $\aA(\mathbb S_\theta^4)$ generated by normal elements $w_1, w_2$ and a self-adjoint element $x$ satisfying
\begin{equation*}
	w_1 w_2 = e^{2\pi\imath\theta} \; w_2 w_1, 
	\qquad
	w_2^* w_1 = e^{2\pi\imath \theta} \; w_1 w_2^*, 
	\qquad
	w_1^* w_1 + w_2^* w_2 + x^*x = \one.
\end{equation*}

Next, we consider the normal subgroup $N := \{\pm \id_{\mathbb{C}^2}\} \subseteq \SU(2)$. 
By~\cite[Prop.~3.18]{SchWa15}, the induced C\Star dynamical system
\begin{equation*}
	(\aA(\mathbb S_{\theta'}^7)^N,\SU(2)/N,\check{\alpha})
\end{equation*}
where $\check{\alpha}$ is defined by the \Star automorphisms $\check{\alpha}_{[U]} := \alpha_U$ for $U \in \SU(2)$, remains free.

Set $\aA(\mathbb{P}_{\theta'}^7) := \aA(\mathbb{S}_{\theta'}^7)^N$ and identify $\SO(3)$ with $\SU(2)/N$ via the universal covering map $q: \SU(2) \to \SO(3)$.
Thus, the C\Star dynamical system becomes:
\begin{equation*}
(\aA(\mathbb{P}_{\theta'}^7),\SO(3),\check{\alpha} \circ \bar{q}^{-1}),
\end{equation*}
where $\bar{q}$ is the induced isomorphism from $\SU(2)/N$ to $\SO(3)$.
From this construction, it is evident that $(\aA(\mathbb{S}_{\theta'}^7),\SU(2),\alpha)$ is a $\SU(2)$-structure for $(\aA(\mathbb{P}_{\theta'}^7), \SO(3),\check{\alpha} \circ \bar{q}^{-1})$.

Denote by $\mathsf{p}_{\theta'}$ the associated Picard homomorphism (see Equation~\eqref{eq:pic}).
Then Corollary~\ref{cor:class1} identifies 
\begin{equation*}
    \Ext(\SU(2),(\aA(\mathbb{P}_{\theta'}^7), \SO(3),\check{\alpha} \circ \bar{q}^{-1}),\mathsf{p}_{\theta'})
\end{equation*}
- the set of equivalence classes of $\SU(2)$-structures yielding $\mathsf{p}_{\theta'}$ - with
\begin{equation*}
    H^2_\Delta\left(\mathbb{Z}_2,\mathcal{U}\left(Z(\aA(\mathbb P_{\theta'}^7))^{\SO(3)}\right)\right).
\end{equation*}
In particular, for $\theta = 0$, this simplifies to 
\begin{equation*}
    H^2(\mathbb{Z}_2,\Cont(\mathbb{S}^4,\mathbb{T})) \cong \Cont(\mathbb{S}^4,\mathbb{T}) / \Cont(\mathbb{S}^4,\mathbb{T})^2,
\end{equation*}
which should be compared to Theorem~\ref{thm:topspin}, where only classical spin structures are classified (see also~Section~\ref{sec:topspin}).

\subsection{Noncommutative frame bundles and their spin structures}
\label{sec:quantumspin}

In this section, we present a general framework for noncommutative frame bundles and their spin structures, extending the discussion of Example~\ref{sec:connes-landi}.

Let $(\aA,\SO(n),\alpha)$ be a free C\Star dynamical system.
In analogy with classical frame bundles, such C\Star dynamical systems are referred to, following~\cite{Wa23}, as \emph{noncommutative frame bundles}.

It is worth recalling from~\cite[Cor.3.13]{Wa23} that noncommutative frame bundles with structure group $\SO(n)$ and fixed point algebra $\aB$ correspond naturally to right Hilbert $\aB$-bimodules that are \emph{tensorial of type~$\pi$}, where $\pi$ is the standard representation of $\SO(n)$ (see~\cite[Def.~3.1 and Def.~3.5]{Wa23}).  
This correspondence assigns to $(\aA,\SO(n),\alpha)$ its associated noncommutative vector bundle with respect to $\pi$, namely
\begin{equation*}
    \Gamma_\aA(\pi) := \{x \in \aA \otimes \mathbb{C}^n : (\forall g \in \SO(n)) \, (\alpha_g \otimes \pi_g)(x) = x\}.
\end{equation*}
In particular, noncommutative frame bundles are uniquely determined, up to isomorphism, by this distinguished class of Hilbert bimodules.  
A concrete example is given by $\Gamma_{\aA(\mathbb P_{\theta'}^7)}(\pi)$; see Example~\ref{sec:connes-landi}.

The basic question is whether $(\aA,\SO(n),\alpha)$ admits a spin structure. 
In concrete cases this can be analyzed using the methods of Section~\ref{sec:constructing_G_structures}. 

Suppose now that $(\hA,\Spin(n),\ha)$ is a spin structure for $(\aA,\SO(n),\alpha)$, and let $\mathsf{p}_{\hA}$ denote the associated Picard homomorphism (see Equation~\eqref{eq:pic}). 
From Corollary~\ref{cor:class1}, it follows~that
\begin{equation*}
    \Ext(\Spin(n),(\aA,\SO(n),\alpha),\mathsf{p}_{\hA})
\end{equation*}
- the set of equivalence classes of $\Spin(n)$-structures with Picard homomorphism $\mathsf{p}_{\hA}$ - can by described by
\begin{equation*}
    H^2_\Delta(\mathbb{Z}_2,\mathcal{U}(Z(\aA)^{\SO(n)})).
\end{equation*}

\subsection{The classical lifting problem}
\label{sec:topspin}

In this section, we apply our results to classical \hyperref[sec:pb]{principal bundles}, which, however are not assumed to be smooth or locally trivial, thereby analyzing the~\hyperref[sec:class_sett]{classical lifting problem}.
The content is divided into three parts: we first specify the initial data, then recall the relevant Picard formalism as preparation, and finally discuss the existence–and–classification problem.

Let $P$ be a compact principal $G$-bundle with base space $X$, and let $(C(P),G,\alpha)$ be the induced free C\Star dynamical system (see Section~\ref{sec:free}).
Moreover, let $\one \rightarrow Z \rightarrow \hG \rightarrow G \rightarrow \one$ be a central extension of compact groups.

On account of~\cite[Sec.~3]{BrGrRi77}, the Picard group $\Pic(\Cont(P))$ is isomorphic to the semi-direct product $\Pic(P) \rtimes \Homeo(P)$, where $\Pic(P)$ denotes the set of equivalence classes of complex line bundles over $P$ and $\Homeo(P)$ the group of homeomorphisms of $P$.  

For a group homomorphism $\mathsf{p}:Z^* \to \Pic(P)$, the $Z^*$-module structure on $\mathcal{U}(\Cont(P)) \cong \Cont(P,\mathbb{T})$, and hence on $\mathcal{U}(\Cont(P)^G) \cong \Cont(X,\mathbb{T})$, is trivial, because left and right multiplication by $\Cont(P)$ on sections of complex line bundles commute. 
This observation will be used tacitly in what follows.

Fix a group homomorphism $\mathsf{p}:Z^* \to \Pic(P)$.  
By~\cite[Thm.~6.8]{SchWa15}, a principal $Z$-bundle $\hP$ over~$P$ with Picard homomorphism $\mathsf{p}$ exists if and only if the following classes vanish:
\begin{enumerate}
\item[1.]
    The characteristic class $\kappa(\mathsf{p}) \in H^3(Z^*,\Cont(P,\mathbb{T}))$,
\item[2.] 
    The secondary characteristic class $\kappa_2(\mathsf{p}) \in H^2_{\mathrm{ab}}(Z^*,\Cont(P,\mathbb{T}))$.
\end{enumerate}
Here, $H^2_{\mathrm{ab}}(Z^*,\Cont(P,\mathbb{T}))$ denotes the subgroup of $H^2(Z^*,\Cont(P,\mathbb{T}))$ classifying Abelian extensions of $Z^*$ by $\Cont(P,\mathbb{T})$.

Suppose now that a principal $\hG$-bundle $\hP$ over $X$ such that $\hP/Z = P$ exists.  
Recall that such existence may be verified using the methods of Section~\ref{sec:constructing_G_structures}, or, in the smooth category, via the~\hyperref[sec:crossed_module]{Neeb--Laurent-Gengoux--Wagemann obstruction}. 
Let $\mathsf{p}$ denote the associated Picard homomorphism (see Equation~\eqref{eq:pic}).
Then Corollary~\ref{cor:class1} establishes a parametrization of 
\begin{equation*}
    \Ext(\hG,(C(P),G,\alpha),\mathsf{p})
\end{equation*}
- the set of equivalence classes of $\hG$-structures realizing $\mathsf{p}$ - by $H^2(Z^*,\Cont(X,\mathbb{T}))$.
Furthermore, an argument analogous to~\cite[Cor.~6.9]{SchWa15} shows that the subset
\begin{equation*}
    \Ext_{\mathrm{pb}}(\hG,(C(P),G,\alpha),\mathsf{p}) 
    \subseteq 
    \Ext(\hG,(C(P),G,\alpha),\mathsf{p})
\end{equation*}
consisting of equivalence classes of commutative $\hG$-structures - \ie, principal $\hG$-bundles $\hP$ over $X$ such that $\hP/Z = P$ - inducing $\mathsf{p}$ is parametrized by 
\begin{equation*}
    H^2_{\mathrm{ab}}(Z^*,\Cont(X,\mathbb{T})),
\end{equation*}
the subgroup of $H^2(Z^*,\Cont(X,\mathbb{T}))$ classifying Abelian extensions of $Z^*$ by $\Cont(X,\mathbb{T})$.

These conclusions are illustrated in three cases below.

\subsubsection*{Spin~structures}
Let $P$ be a compact smooth principal $\SO(n)$-bundle with base space $X$, where $n \geq 3$, and consider the central extension $1 \to \mathbb{Z}_2 \to \Spin(n) \to \SO(n) \to 1$.

In what follows, we distinguish between \hyperref[sec:spin_structure]{classical spin structures} - smooth and therefore locally trivial - and non-commutative spin structures, as introduced above.

First, note that 
\begin{equation*}
    H^2_{\mathrm{ab}}(\Z_2,\Cont(X,\mathbb{T})) = H^2(\Z_2,\Cont(X,\mathbb{T}))
\end{equation*}
in this case, implying that there are no genuinely noncommutative spin structures. 
Moreover, as derived above, the inequivalent commutative spin structures are parametrized, up to equivalence, by
\begin{equation*}
    H^2_{\mathrm{ab}}(\Z_2,\Cont(X,\mathbb{T})) \cong \Cont(X,\mathbb{T}) / \Cont(X,\mathbb{T})^2 \cong H^1(X,\Z_2)
\end{equation*}
This coincides with the classical result, where the inequivalent classical spin structures are parametrized, up to equivalence, by $H^1(X,\Z_2)$ (see Theorem~\ref{thm:topspin}).
Our theory thus extends the classical classification of spin structures.

\subsubsection*{One-point base space, part~I}

Let $X := \{\ast\}$, the one-point space, and consider the 2-fold covering $1 \rightarrow \mathbb{Z}_2 \rightarrow \hT \rightarrow \mathbb{T} \rightarrow 1$.
Then $\hT$, viewed as a principal $\hT$-bundle over $X$, defines a $\hT$-structure for $P := \mathbb{T}$.  

As established above, the~inequivalent non-commutative $\hT$-structures are parametrized, up to equivalence, by 
\begin{equation*}
    H^2(\Z_2,\mathbb{T}) \cong \mathbb{T} / \mathbb{T}^2 = 0,
\end{equation*}
so no noncommutative $\hT$-structures exist.
Therefore, $\hT$ is the unique $\hT$-structure up to equivalence.

\subsubsection*{One-point base space, part~II}
\label{sec:ops2}

Under the same assumptions, but with the central extension 
\begin{equation*}
    1 \longrightarrow \mathbb{T}^2 \longrightarrow \mathbb{T}^3 := \mathbb{T}^2 \times \mathbb{T} \longrightarrow \mathbb{T} \longrightarrow 1,
\end{equation*}
the group $\mathbb{T}^3$, viewed as a principal $\mathbb{T}^3$-bundle over $X$, defines a $\mathbb{T}^3$-structure for $P := \mathbb{T}$.  

In this case, $\mathsf{p}$ is the trivial homomorphism. 
Moreover, the commutative $\mathbb{T}^3$-structures are parametrized, up to equivalence, by 
\begin{equation*}
    H^2_{\mathrm{ab}}(\mathbb{Z}^2,\mathbb{T}) \cong 0,
\end{equation*}
which implies that $\mathbb{T}^3$ is the unique commutative $\mathbb{T}^3$-structure up to equivalence.
In contrast, the noncommutative $\mathbb{T}^3$-structures are parametrized, up to equivalence, by 
\begin{equation*}
    H^2(\mathbb{Z}^2,\mathbb{T}) \cong \mathbb{T}.
\end{equation*}

\subsection{A noncommutative \texorpdfstring{$\mathbb{T}^3$}{T3}-structure}
\label{sec:QT3}

In the previous example, we observed that the C\Star dynamical system
$(\Cont(\mathbb{T}),\mathbb{T},\rho)$, which corresponds to $\mathbb{T}$ viewed as a principal $\mathbb{T}$-bundle over a point, admits many inequivalent noncommutative
$\mathbb{T}^3$-structures.
In this section, we construct such a noncommutative $\mathbb{T}^3$-structure
using a suitable quantum $3$-torus.

Fix $\vartheta \in \mathbb{R}$ and consider the (degenerate) quantum $3$-torus
$\mathbb{T}^3_\theta$ associated with the skew-symmetric matrix
\[
    \theta :=
    \begin{pmatrix*}[r]
        0 & \vartheta & 0 
        \\
        -\vartheta & 0 & 0
        \\
        0 & 0 & 0
    \end{pmatrix*}.
\]
Thus, $\mathbb{T}^3_\theta$ is the universal C\Star algebra generated by unitaries
$u,v,w$ subject to the relations
\begin{equation*}
    uv = e^{2\pi i \vartheta} vu,
    \qquad
    uw = wu,
    \qquad
    vw = wv.
\end{equation*}
Equivalently, $\mathbb{T}^3_\theta \cong \mathbb{T}^2_{\vartheta} \otimes \Cont(\mathbb{T})$, with $w$ generating the $\Cont(\mathbb{T})$ tensor factor.

As explained in Section~\ref{sec:qtori}, $\mathbb{T}^3_\theta$ carries a
natural action of the classical torus $\mathbb{T}^3$, given by
\begin{equation*}
    \gamma_{(z_1,z_2,z_3)}(u) := z_1 \cdot u,
    \qquad
    \gamma_{(z_1,z_2,z_3)}(v) := z_2 \cdot v,
    \qquad
    \gamma_{(z_1,z_2,z_3)}(w) := z_3 \cdot w.
\end{equation*}
The resulting C\Star dynamical system $(\mathbb{T}^3_\theta,\mathbb{T}^3,\gamma)$ is ergodic and cleft.
Crucially, it defines a $\mathbb{T}^3$-structure for
$(\Cont(\mathbb{T}),\mathbb{T},\rho)$, as is readily checked.

We proceed to determine the associated Picard homomorphism
$\mathsf{p}:\mathbb{Z}^2\to\Pic(\Cont(\mathbb{T}))$ by considering the
restricted C\Star dynamical system
$(\mathbb{T}^3_\theta,\mathbb{T}^2,\gamma|_{\mathbb{T}^2})$, which remains free.
Its fixed point algebra is $\Cont(\mathbb{T})$.
For $(k,l)\in\mathbb{Z}^2$, the corresponding isotypic component is
\begin{equation*}
\mathbb{T}^3_\theta(k,l)=\Cont(\mathbb{T})\,u^k v^l .
\end{equation*}
Since $w$ is central, each isotypic component represents the neutral element
of~$\Pic(\Cont(\mathbb{T}))$; hence the associated Picard homomorphism
$\mathsf{p}:\mathbb{Z}^2\to\Pic(\Cont(\mathbb{T}))$ is trivial, just as in the
last paragraph of Example~\ref{sec:ops2}.

Summarizing, both $(\mathbb{T}^3_\theta,\mathbb{T}^3,\gamma)$ and the classical
C\Star dynamical system associated with $\mathbb{T}^3$, viewed as a principal $\mathbb{T}^3$-bundle over a point, determine elements of $\Ext(\mathbb{T}^3,(\Cont(\mathbb{T}),\mathbb{T},\rho),\mathsf{p})$.

\subsubsection*{A no-go example}

An earlier version of this paper attempted to use the Heisenberg group
C\Star algebra $C^*(H_3)$ as the basic model for the above construction.
Recall that $C^*(H_3)$ is the universal C\Star algebra generated by unitaries
$u,v,w$ satisfying
\begin{equation*}
    uw=wu, \qquad vw=wv, \qquad uv=wvu .
\end{equation*}
There is a strongly continuous action of $\mathbb{T}^2$ given by
\begin{equation*}
    \theta_{(z_1,z_2)}(u) = z_1 \cdot u,
    \qquad
    \theta_{(z_1,z_2)}(v) = z_2 \cdot v,
    \qquad
    \theta_{(z_1,z_2)}(w) = w.
\end{equation*}
The fixed point algebra of the resulting C\Star dynamical system $(C^*(H_3),\mathbb{T}^2,\theta)$ is $\Cont(\mathbb{T})$, which is generated by $w$.
For $(k,l)\in\mathbb{Z}^2$, the corresponding isotypic component is
\begin{equation*}
    C^*(H_3)(k,l)=\Cont(\mathbb{T}) u^k v^l.
\end{equation*}
Each isotypic component contains unitary elements, so
$(C^*(H_3),\mathbb{T}^2,\theta)$ is cleft and hence free.
Furthermore, as $w$ is central, each isotypic component represents the neutral
element of $\Pic(\Cont(\mathbb{T}))$; consequently the associated Picard
homomorphism $\mathsf{p}$ is trivial.

However, the required inclusion
\begin{equation*}
\rho(\mathbb{T}) \subseteq \Aut(\Cont(\mathbb{T}))_{C^*(H_3)}
\end{equation*}
fails.
Indeed, by~\cite[Lem.~27]{Had03}, every \Star automorphism of $C^*(H_3)$ fixes
the central unitary~$w$.
Thus the base $\mathbb{T}$-action cannot be lifted, and $C^*(H_3)$ does not
yield a noncommutative $\mathbb{T}^3$-structure for $\mathbb{T}$.


\section*{Open problems}

\begin{itemize}
\item 
    Equivariant Picard groups represent a compelling area for further investigation in C\Star dynamical systems. 
While their definition is well-established, their deeper properties and structure remain underexplored (see~\cite{Ko17,Sad21}, the only relevant sources identified to date). 
Understanding the behavior of these groups, particularly in relation to the lifting problems discussed in this paper, could provide valuable insights into the broader theory of C\Star algebras. 
Consequently, this presents a promising direction for future research.
\item 
   The freeness result in Theorem~\ref{thm:freeness} was proved using ideas from factor system theory.
It remains open whether a simpler or alternative approach is possible.
\end{itemize}

\section*{Acknowledgement}

The author gratefully acknowledges Karl-Hermann Neeb, Kay Schwieger, and Johan Öinert for valuable technical discussions and correspondence that have significantly influenced the development of this work.

\appendix

\section{Factor systems}
\label{sec:facsys}

Let $(\aA,G,\alpha)$ be a free C\Star dynamical system with fixed point algebra $\aB$.
For each~$\sigma \in \Irrep(G)$,~let $\hH_\sigma$ be a finite-dimensional Hilbert space, and let $s(\sigma) \in \aA \otimes \End(V_\sigma,\hH_\sigma)$ be an isometry such that $\alpha_g\bigl(s(\sigma)\bigr)=s(\sigma) \cdot \sigma_g$ for all $g \in G$.
For $1 \in \Irrep(G)$, we set $\hH_1 := \C$ and $s(1) := \one_\aA$.
A distinguishing feature of $(\aA,G,\alpha)$ is the \emph{factor system} associated with the isometries $s(\sigma)$, $\sigma \in \Irrep(G)$ (see~\cite[Def.~4.1]{SchWa17}), which we now briefly recall for the reader’s convenience.

Let $\rep(G)$ denote the C\Star tensor category of representations of $G$.

First, we naturally extend the assignments $\sigma \mapsto \hH_\sigma$ and $\sigma \mapsto s(\sigma)$ to arbitrary representations $\sigma \in \rep(G)$ by taking direct sums with respect to irreducible subrepresentations.
Second, for each $\sigma \in \rep(G)$, we define the \Star homomorphism 
\begin{equation*}
	\gamma_\sigma: \aB \to \aB \otimes \End(\hH_\sigma),
	\qquad
	\gamma_\sigma(b) := s(\sigma) (b \otimes \one_{V_\sigma}) s(\sigma)^*.
\end{equation*}
Third, for each pair $(\sigma,\pi) \in \rep(G) \times \rep(G)$, we introduce the element
\begin{equation*}
	\omega(\sigma,\pi) := s(\sigma) s(\pi) s(\sigma \otimes \pi)^* \in \aB \otimes \End(\hH_{\sigma \otimes \pi},\hH_\sigma \otimes \hH_\pi).
\end{equation*}
The following \emph{twisted cocycle conditions} hold:
\begin{align}
	\label{eq:ranges_sys}
	\omega(\sigma,\pi)^* \omega(\sigma,\pi) = \gamma_{\sigma \otimes \pi}(\one_\aB), 
	& \qquad 
	\omega(\sigma,\pi) \omega(\sigma,\pi)^* = \gamma_\sigma \bigl(\gamma_\pi(\one_\aB) \bigr), 
	\\
	\label{eq:coaction_sys}
	\omega(\sigma, \pi) \gamma_{\sigma \otimes \pi}(b) &= \gamma_\sigma\bigl( \gamma_\pi(b) \bigr) \omega(\sigma, \pi),
	\\ 
	\label{eq:cocycle_sys}
	\omega(\sigma, \pi) \omega(\sigma \otimes \pi, \rho) 
		&= \gamma_\sigma \bigl( \omega(\pi, \rho) \bigr) \omega(\sigma, \pi \otimes \rho)
\end{align}
for all $\sigma, \pi, \rho \in \rep(G)$ and $b \in \aB$ (see~\cite[Lem.~4.3]{SchWa17}). 
The triple $(\hH,\gamma,\omega)$, representing the above families, is called the \emph{factor system of $(\aA, G, \alpha)$ associated with the isometries $s(\sigma)$, $\sigma \in \Irrep(G)$}.
When an explicit reference to the isometries is unnecessary, it is simply referred to as a \emph{factor system of $(\aA, G, \alpha)$}.

Importantly, the notion of a factor system of $(\aA,G,\alpha)$ depends solely on data associated with the structure group $G$ and the fixed point algebra $\aB$, motivating the following definition:

\begin{defn}
\label{def:absfacsys}
Let $G$ be a compact group and let $\aB$ be a unital C\Star algebra.
A \emph{factor~system for $(G,\aB)$} is defined as a triple $(\hH,\gamma,\omega)$ consisting of:
\begin{itemize}
\item
	a family of finite-dimensional Hilbert spaces $\hH_\sigma$, $\sigma \in \Irrep(G)$,
\item
	a family of \Star homomorphisms $\gamma_\sigma: \aB \to \aB \otimes \End(\hH_\sigma)$, $\sigma \in \Irrep(G)$, and 
\item
	 a family of elements $\omega(\sigma, \pi)$ in $\aB \otimes \End(\hH_{\sigma \otimes \pi}, \hH_\sigma \otimes \hH_\pi)$, $\sigma, \pi\in \Irrep(G)$,
\end{itemize}
satisfying Equations~\eqref{eq:ranges_sys},~\eqref{eq:coaction_sys},~\eqref{eq:cocycle_sys},  along with the normalization conditions $\hH_1 = \C$, $\gamma_1 = \id_\aB$, and $\omega(1, \sigma) = \gamma_\sigma(\one_\aB) = \omega(\sigma, 1)$ for all $\sigma \in \Irrep(G)$.
Note that we have implicitly employed functorial extensions of the families $\hH_\sigma$, $\gamma_\sigma$, and $\omega(\sigma, \pi)$, $\sigma, \pi \in \Irrep(G)$.
\end{defn}

Clearly, each factor system of $(\aA,G,\alpha)$ is also a factor system for $(G,\aB)$.
Furthermore, for any unital C\Star algebra $\aB$ and any compact group $G$, each factor system for $(G,\aB)$ gives rise to a free C\Star dynamical system with structure group $G$ and fixed point algebra $\aB$, in which it appears as one of its factor systems (see~\cite[Sec.~5]{SchWa17}).

The following allows for a comparison of factor systems:

\begin{defn}
\label{def:conjugacy}
Let $\aB$ be a unital C\Star algebra and let $G$ be a compact group.
Two factor systems $(\hH,\gamma,\omega)$ and $(\hH',\gamma',\omega')$ for $(G,\aB)$ are called \emph{conjugated} if there exist partial isometries $v(\sigma) \in \aB \otimes \End(\hH_\sigma,\hH'_\sigma)$, $\sigma \in \Irrep(G)$, normalized by $v(1) = \one_\aB$, such that
\begin{gather*}
	\Ad[v(\sigma)] \circ \gamma_\sigma = \gamma_\sigma',
	\qquad 
	\Ad[v(\sigma)^*] \circ \gamma_\sigma' = \gamma_\sigma, 
	\\
	v(\sigma) \gamma_\sigma\bigl( v(\pi) \bigr)  \omega(\sigma, \pi)
	= \omega'(\sigma, \pi) v(\sigma \otimes \pi)
\end{gather*}
for all $\sigma, \pi \in \Irrep(G)$.
In this case, we write $(\hH',\gamma', \omega') = v(\hH, \gamma, \omega)v^*$ or $(\hH,\gamma, \omega) \sim (\hH', \gamma', \omega')$ when no reference to the partial isometries is needed.
Note that we have implicitly utilized a functorial extension of the familiy  $v(\sigma)$, $\sigma \in \Irrep(G)$.
\end{defn}

All factor systems of $(\aA,G,\alpha)$ are conjugated (see~\cite[Lem. 4.3]{SchWa17}).
Furthermore, for any compact group $G$ and any unital C\Star algebra $\aB$, the conjugacy classes of factor systems for $(G,\aB)$ correspond bijectively to the equivalence classes of free C\Star dynamical systems with structure group $G$ and fixed point algebra $\aB$ (see~\cite[Thm.~5.6]{SchWa17}).

\subsubsection*{Invariants via factor systems}

Given a factor system $(\hH,\gamma,\omega)$ of $(\aA,G,\alpha)$, Equations~\eqref{eq:ranges_sys} and~\eqref{eq:coaction_sys} naturally suggest considering the $K$-theory of $\aB$ and, in particular, the induced positive group homomorphisms $K_0(\gamma_\sigma):K_0(\aB) \to K_0(\aB)$, for $\sigma \in \rep(G)$.
These maps depend only on the conjugacy class of the factor system and thus give rise to invariants of $(\aA, G, \alpha)$. 
Moreover, the assignment $\sigma \mapsto K_0(\gamma_\sigma)$ defines a functorial structure:
\begin{itemize}
\item 
    $K_0(\gamma_{\sigma \oplus \pi}) = K_0(\gamma_\sigma) + K_0(\gamma_\pi)$, 
\item
    $K_0(\gamma_{\sigma \otimes \pi}) = K_0(\gamma_\sigma) \circ K_0(\gamma_\pi)$
\end{itemize}
for all  $\sigma, \pi \in \rep(G)$.

\bibliographystyle{abbrv}
\bibliography{short,RS}

\end{document}